\newcommand{\bea}{\begin{eqnarray}}
\newcommand{\eea}{\end{eqnarray}}
\newcommand{\Z}{\Bbb Z}
\newcommand{\C}{\Bbb C}
\newcommand{\g}{  \mathfrak g}
\newtheorem{theorem}{Theorem}[section]
\newtheorem{lemma}[theorem]{Lemma}
\theoremstyle{definition}
\newtheorem{definition}[theorem]{Definition}
\theoremstyle{remark}
\newtheorem{remark}[theorem]{Remark}
\theoremstyle{proposition}
\newtheorem{proposition}[theorem]{Proposition}
\theoremstyle{corollary}
\newtheorem{corollary}[theorem]{Corollary}
\numberwithin{equation}{section}
\def \<{\langle}
\def \>{\rangle}
\begin{document}

\title[]{  Nappi-Witten vertex operator algebra    via inverse Quantum Hamiltonian Reduction
  }
\author[]{Dra\v zen  Adamovi\' c}
\author[]{Andrei Babichenko}

\maketitle

\begin{abstract}
The study of representation theory of the Nappi-Witten VOA   was initiated in   \cite{BJP} and \cite{BKRS}.   In this paper we use the 
 technique of inverse quantum hamiltonian reduction to investigate the representation theory of the Nappi-Witten VOA $ V^1(\mathfrak h_4)$.
We first prove    that the quantum hamiltonian reduction of  $ V^1(\mathfrak h_4)$ is   the Heisenberg-Virasoro VOA $L^{HVir}$ of level zero investigated in  \cite{AR, Billig}.
We   invert  the quantum hamiltonian reduction in this case and  prove that   $ V^1(\mathfrak h_4)$ is  realized as a vertex subalgebra of $L^{HVir} \otimes \Pi$, where   $\Pi$ is a certain lattice-like vertex algebra. Using such an  approach we shall realize all relaxed highest weight modules which were classified in \cite{BKRS}. We show that  every relaxed highest weight module, whose top components is neither highest nor lowest weight $\mathfrak h_4$--module, has the form $M_1 \otimes \Pi_{1} (\lambda)$ where $M_1$ is an irreducible, highest weight $L^{HVir}$--module and $\Pi_{1} (\lambda)$ is an irreducible weight $\Pi$--module.
 Using the fusion rules for $L^{HVir}$--modules from \cite{AR}  and the previously developed   methods of constructing logarithmic modules from  \cite{AdM-selecta} we are able to construct a family of logarithmic $V^1(\mathfrak h_4)$-modules.    The Loewy diagrams  of these logarithmic modules are completely analogous to the Loewy diagrams of  projective modules  of weight $L_k(\mathfrak{sl}(2))$--modules, so  we  expect that our logarithmic modules  are also  projective in  a certain category of weight  $ V^1(\mathfrak h_4)$--modules.
\end{abstract}

\section{Introduction}

\subsection{Physics background}
In recent years, there has been an increased interest in studying two dimensional conformal theories
associated to non-semisimple Lie algebras. Among them are field theories based on abelian extensions
of the Virasoro algebras, such as the Heisenberg-Virasoro algebra, Gallilean conformal algebra (relevant for
the non-relativistic $AdS_2/CFT_1$ correspondence) and the
Virasoro algebra extended by spin two quasi-primary field with regular OPE with itself (also known
as the $BMS_3$ algebra). The latter appears as the symmetry of the tensionless closed bosonic string worldsheet.

Another important set of examples comes from two dimensional conformal field theories on manifolds
with non-reductive Lie groups. One of the simplest among them is
the Nappi-Witten WZW model \cite{NW}, which was originally introduced in order to describe string
propagating in monochromatic plane-wave background (particularly
in the presence of gravitational waves \cite{KK,DAK}). Its abelian and diagonal cosets have been related to other
interesting backgrounds \cite{DAQ1,DAQ2}. An understanding of the representation theory of two dimensional
conformal symmetry of the Nappi-Witten model, beyond the affinization of unitary representations
of its Lie algebra, is therefore an important problem which can be investigated by means of
vertex operator algebra (VOA) methods. This is the subject of the present paper.

\subsection{ The Nappi-Witten vertex algebra: state of the art}

Nappi-Witten Lie algebra  $\mathfrak h_4$ is the four-dimensional complex Lie algebra with basis $\{E,F, I,J\}$ whose nonzero Lie
brackets are, modulo antisymmetry, as follows:
$$  [E,F] = I, \  [J,E] = E,\  [J,F] = -F. $$
Note that $I$ is in the center of $\mathfrak h_4$, and  $\mathfrak h_4$  is not reductive Lie algebra. Let $$\widehat{ \mathfrak h}_4= \mathfrak h_4 \otimes {\C}[t, t^{-1}] + {\C} K$$ be the associated affine Lie algebra.  Let $V^k(\mathfrak h_4)$ be the universal affine vertex algebra. The construction is the same as that of $V^k(\g)$, where $\g$ is a simple Lie algebra (cf. \cite{Kac, LL}). One can show (cf. \cite{BKRS}) that for $k \ne 0$,  $V^k(\mathfrak h_4)$ is a simple vertex algebra and that $V^k(\mathfrak h_4) \cong V^1(\mathfrak h_4)$.

The representation theory of affine Nappi-Witten algebra was previously addressed
using vertex operator algebra theory  methods  in \cite{BJP} and \cite{BKRS}. In  \cite{BJP} a necessary
and sufficient condition for generalized Verma module to be irreducible was formulated
and irreducible highest weight modules were classified. Furthermore, a Wakimoto type realization of
Nappi-Witten VOA was suggested, which we use in the present paper.
A more general approach to this representation theory was worked out in  \cite{BKRS}, with the emphasis
on the special role of relaxed highest weight modules and their classification \cite{KR}. Specifically, the authors studied the category of weight modules with finite dimensional weight spaces over the associated affine VOA, obtaining a classification of the irreducible modules in this category  and computing their characters.
Non-semisimplicity of this category was shown, meaning that it contains indecomposable modules, however their explicit construction was not implemented.

\subsection{ Quantum Hamiltonian reduction and Heisenberg-Virasoro vertex algebra $L^{HVir}$}
 For any simple Lie algebra $\g$   and pair $(x,f)$ such that $ad(x)$  is diagonalizable on $\g$ and $[x,f] = -f$ and $f$ is an even nilpotent element, Kac,  Wakimoto and Roan  in \cite{ KW04, KWR} constructed the  universal affine $W$--algebra $W^k(\g, x, f)$.  In the present paper, we show that this construction can be applied for the Nappi-Witten algebra $\g = \mathfrak h_4$. We take $x= J$, and $f = F$. Then   $ad(x)$ defines the gradation  $$\g = \g_{-1} + {\g}_0 + \g _1, $$
 such that $\g ^f = \mbox{span}_{\C} \{F, I\}$, and using \cite{KW04}  we get our first new result (cf. Proposition  \ref{prop-ds}):

\begin{itemize}
\item $W^{k=1}(\g, x, f)$  is generated  by the Virasoro field $\hat L$  of central charge $c_L=2$ and commutative Heisenberg field $\hat I$ such that $\hat L$  and $\hat I$ generate the Heisenberg-Virasoro vertex algebra $L^{HVir}$ (cf.  Section \ref{Hvir-sect} for definition).
\end{itemize}
The Heisenberg Virasoro vertex algebra $L^{HVir}$ of level zero was introduced by Y. Billig \cite{Billig}, and its  vertex algebraic  properties were further investigated in \cite{AR}. Therefore the representation theory of $ V^1(\mathfrak h_4)$ should be closely related to the representation theory $L^{HVir}$. In this article, we explore this correspondence  in more details.

\subsection{    Inverse Quantum  Hamiltonian Reduction         }
Construction of inverses of QHR and their applications in the representation theory of affine vertex algebras and affine $W$-algebras is a relatively new topic in the vertex algebra theory (cf. \cite{A-2017, ACG-2024, AKR-2021, AKR-2023, Zac1, Zac2} ).
Most closely related to the Nappi-Witten vertex algebra is the case  $\g = \mathfrak{sl}_2$, for which the construction and its applications  were studied in \cite{A-2017}. Then the (simple) affine vertex algebra $L_k(\mathfrak{sl}_2)$ is realized as a subalgebra of  $L^{Vir} _{c_k} \otimes \Pi$, where $L^{Vir} _{c_k}$ is the (simple) Virasoro vertex algebra  of central charge $c_k = 1- 12 \frac{(k+1) ^2}{k+2}$ and $\Pi$ is the half lattice vertex algebra (cf. Subsection \ref{pi-voa}). In the case of the  Nappi-Witten vertex algebra, we prove the following: (cf. Theorem  \ref{inverse-realization}):

\begin{itemize}
\item There exists an  embedding $ V^1(\mathfrak h_4) \hookrightarrow L^{HVir} \otimes \Pi$.
\item 

There is a homomorphism of $V^1(\mathfrak h_4)$--modules   $S :   L^{HVir} \otimes \Pi \rightarrow  M_2$, where $M_2$ is certain weight   $ L^{HVir} \otimes \Pi $--module,  such that  $ V^1(\mathfrak h_4)  =  \mbox{Ker} _{ L^{HVir} \otimes \Pi}  (S)$. The operator $S$ is also called the screening operator.
\end{itemize}

\subsection{Relaxed and logarithmic modules}

In \cite{A-2017} all  irreducible, relaxed highest weight modules { over admissible affine vertex algebras $L_k(\mathfrak{sl}_2)$}  have been shown to have the form $M_1 \otimes M_2$, where $M_1$ is an irreducible $L^{Vir} _{c_k} $--module, and  $M_2$ is an irreducible $\Pi$--module. The paper \cite{A-2017} also contains a realization of logarithmic modules, which were later identified in \cite{ACK-23} as projective covers of irreducible modules in a certain category.  Their structure is equivalent to the  structure of projective modules for the corresponding (unrolled) small quantum group. We believe that similar results can be obtained for  $ V^1(\mathfrak h_4)$--modules. In this paper we make the first step in this direction and present a realization of a family of logarithmic $ V^1(\mathfrak h_4)$--modules following the approach from \cite{A-2017}.

Let us explain our result in more details.
\begin{itemize}
\item Let  $L^{HVir}[x,y]$ be  a highest weight $L^{HVir}$--module  (cf.  Section \ref{Hvir-sect}) and $\Pi_r(\lambda)$ a weight $\Pi$--module (cf.  Subsection \ref{pi-voa}).

\item  We prove  in Theorem \ref{thm:relax} that all irreducible relaxed modules for $ V^1(\mathfrak h_4)$--modules, whose top component is neither lowest nor highest weight $\mathfrak h_4$--module, have the form
$L^{HVir}[x,y] \otimes \Pi_1(\lambda)$.
\item  Then we  consider  the extended vertex algebra $\mathcal V=  L^{HVir} \otimes \Pi \oplus L^{HVir}[-1,0] \otimes \Pi_{-1}(0)$, and using   Proposition  \ref{log-koncept}  we see that any  $\mathcal V$--module $(M, Y_M)$ can be deformed to   a structure of logarithmic  $V^1(\mathfrak h_4)$--module $(\widetilde M, Y_{\widetilde M})$ such that the deformed action of the Virasoro algebra is
$\widetilde L(z) = L^{sug} (z) +  z^{-1} Y(s, z) $, 
{ where  $s$ is a primary vector for the action of the Virasoro algebra of conformal weight $1$ of the form $s = v_{-1,0} \otimes e^{\nu}  $ 
such that $v_{-1,0}$ is the highest weight vector of $L^{HVir}[-1,0]$ and $e^{\nu} \in   \Pi_{-1}(0)$. }

\item Using the fusion rules for the Heisenberg-Virasoro vertex algebra $L^{HVir}$ and the fusion rules for  $\Pi$--modules, we construct  a family of $\mathcal V$--modules $U[x,y, \lambda, r]$, which after deformation  produces the  logarithmic modules
$\mathcal P_r[x,y]$.
\item We prove in  Theorem  \ref{log-non-split}  that   $\mathcal P_r[x,y]$ is a non-split extension
$$ 0   \rightarrow L^{HVir} [x-1,  \frac{x-2}{x-1} y] \otimes \Pi_{r-1}(\lambda)    \rightarrow   \mathcal P_{r}[x,y] \rightarrow
 L^{HVir} [x, y] \otimes \Pi_r(\lambda)
  \rightarrow 0 $$
  of two reducible  weight  modules. In Subsection  \ref{str-log-2} we obtain the Loewy diagram of $  \mathcal P_r[x,y]$.
  \item We also present some conjectures about  the projectivity of these modules.

\end{itemize}

\subsection*{Acknowledgements} We thank  referee for helpful suggestions and comments.

	D. Adamovi\' c  was supported by the Croatian Science Foundation     under the project IP-2022-10-9006.
 A. Babichenko was partially supported by the ISF Grant 1957/21 and thankful to Department of Mathematics, University of Zagreb for hospitality where this project was started.

\section{Preliminaries}

\subsection{Vertex algebras and their  logarithmic modules}

\label{constr-log-uvod}

 In the paper we assume that the reader is familiar with basic concepts in the vertex algebra theory such as modules, intertwining operators (cf. \cite{Kac, FZ,  LL}). We shall recall here only the definition of logarithmic modules.

Let $(V,Y,{\bf 1}, \omega)$ be a vertex operator algebra.  For $v \in V$, we set $Y(v,z) = \sum_{n \in {\Z}} v_n z^{-n-1}$. Let  $(M, Y_M)$ be its weak module.  
Then  the components of the field
$$ Y_M(\omega, z) = \sum_{n \in {\Z}} L(n) z^{-n-2} \quad (L(n) = \omega_{n+1}), $$
defines on $M$ the structure of a module for the Virasoro algebra.
\begin{definition}
A weak module     $(M, Y_M)$  for the vertex operator algebra $(V, Y , 1, {\omega})$  is called  a logarithmic module if it admits the following decomposition
   $$ M = \coprod_{r \in {\C}}  M_r, \quad  M_r = \{ v \in M \ \vert \ (L(0) -r) ^k = 0 \ \mbox{for some} \ k \in {\Z}_{>0} \}. $$

  If $M$ is a logarithmic module, we say that it has    rank 
   $m \in {\Z_{\ge 1} }$ if $$ (L(0)-L_{ss} (0) ) ^{m}  = 0, \quad (L(0)-L_{ss} (0) ) ^{m-1}   \ne 0,$$
where $L_{ss}(0)$ is the semisimple part of $L(0)$.
\end{definition}

Let $(V, Y_V , {\bf 1}, \omega)$ be a vertex operator algebra and $(M, Y_M)$ a $V$--module having integral weights with respect to $L(0) $.   Let $\mathcal V = V\oplus M$.
Define $$ Y_{\mathcal V} ( v_1 + w_1, z) ( v_2 + w_2) = Y_V  (v_1, z) v_2 +  Y _M (v_1, z ) w_2  + e^{ z L(-1) } Y_M( v_2, -z ) w_1, $$
where $v_1, v_2  \in V$, $w_1, w_2 \in M$.
Then by \cite{Li} $(\mathcal V,  Y_{\mathcal V} ,  {\bf 1}, \omega )$ is a vertex operator algebra.
The following lemma gives a method for a construction of a family of  $\mathcal V$--modules.

\begin{lemma}\cite{AdM-2012} \label{AdM-2012}
Assume that $(M_2, Y_{M_2}) $ and $(M_3, Y_{M_3})$ be $V$--modules, and let $\mathcal Y (\cdot, z)$ be an intertwining operator of type   $\binom{ M_3}{ M  \ \ M_2}$ with integral powers of  $z$. Then $(M_2 \oplus M_3, Y_{M_2 \oplus M_3})$ is a $\mathcal V$--module, where the vertex operator is  given by
$$ Y_{M_2 \oplus M_3} (v + w, z) ( w_2 + w_3) = Y_{M_2} (v, z) w_2 + Y_{M_3} (v, z) w_3   + \mathcal Y (w, z) w_2 $$
 for $v \in V$, $w \in M$, $w_i \in M_i$, $i=1,2$.
\end{lemma}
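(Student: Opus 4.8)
The plan is to verify directly that $(M_2\oplus M_3,\,Y_{M_2\oplus M_3})$ satisfies the defining axioms of a module over the vertex operator algebra $\mathcal V=V\oplus M$ (whose vertex algebra structure is the one supplied by \cite{Li}): lower truncation, the vacuum property $Y_{M_2\oplus M_3}({\bf 1},z)=\id$, and the Jacobi identity; once these hold, the $L(-1)$--derivative property is automatic (cf.\ \cite{LL}). Lower truncation follows at once from that of $Y_{M_2}$, $Y_{M_3}$ and of the intertwining operator $\mathcal Y$, and it is exactly here that the hypothesis that $\mathcal Y$ has integral powers of $z$ enters: it ensures that $Y_{M_2\oplus M_3}(a+x,z)(w_2+w_3)=Y_{M_2}(a,z)w_2+Y_{M_3}(a,z)w_3+\mathcal Y(x,z)w_2$ lies in $(M_2\oplus M_3)((z))$ rather than carrying fractional powers of $z$. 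The vacuum property is immediate, since ${\bf 1}={\bf 1}+0$ has vanishing $M$--component, so $\mathcal Y(0,z)w_2=0$ and the diagonal part reduces to the identity.

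For the Jacobi identity I would first note that $M\subset\mathcal V$ is an ideal which, under $Y_{M_2\oplus M_3}$, annihilates the $M_3$--summand and maps the $M_2$--summand into $M_3$ via $\mathcal Y$; hence on $M_3$ the Jacobi identity is simply the $V$--module Jacobi identity for $M_3$, and the genuinely new content is the action on a vector $w_2\in M_2$. I would then fix $a+x,\ b+y\in\mathcal V$ (with $a,b\in V$, $x,y\in M$) and, using bilinearity in the two vertex--operator arguments, split into four cases. If $x=y=0$, one recovers the $V$--module Jacobi identity for $M_2$. If $x=0$ and $y\in M$, then $Y_{M_2\oplus M_3}(a,z_1)$ acts diagonally, $Y_{M_2\oplus M_3}(y,z_2)$ acts as $\mathcal Y(y,z_2)\colon M_2\to M_3$, and $Y_{\mathcal V}(a,z_0)y=Y_M(a,z_0)y\in M$, so the required identity is precisely the Jacobi identity defining the intertwining operator $\mathcal Y$ of type $\binom{M_3}{M\ \ M_2}$. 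If $x\in M$ and $y=0$, then $Y_{\mathcal V}(x,z_0)b=e^{z_0L(-1)}Y_M(b,-z_0)x\in M$, so the identity to be proved now involves $\mathcal Y(x,z_1)$, $Y_{M_2}(b,z_2)$, $Y_{M_3}(b,z_2)$ and $\mathcal Y(e^{z_0L(-1)}Y_M(b,-z_0)x,z_2)$; I would derive it from the Jacobi identity for $\mathcal Y$ after rewriting that identity in the form with $\mathcal Y$ placed at $z_1$ and $Y_V(\cdot)$ at $z_2$, using the $L(-1)$--conjugation formula $\mathcal Y(e^{z_0L(-1)}w,z)=\mathcal Y(w,z+z_0)$ together with the standard formal $\delta$--function identities. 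Finally, if $x,y\in M$, then $Y_{M_2\oplus M_3}(x,z_1)Y_{M_2\oplus M_3}(y,z_2)$ and $Y_{M_2\oplus M_3}(y,z_2)Y_{M_2\oplus M_3}(x,z_1)$ both vanish on $M_2\oplus M_3$, since $\mathcal Y$ takes values in $M_3$, which is killed by $M$, while the right--hand side vanishes because $Y_{\mathcal V}(x,z_0)y=0$ by the defining formula for $\mathcal V$.

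The only step that demands genuine care is the case $x\in M$, $y=0$: the Jacobi identity for an intertwining operator is conventionally written with the intertwining operator at $z_2$ and the vertex operator $Y_V(\cdot)$ at $z_1$, whereas here it is needed in the ``swapped'' form, and one must match $\mathcal Y(Y_{\mathcal V}(x,z_0)b,z_2)$ with the corresponding iterate term of that identity. This is a routine rearrangement by the usual formal $\delta$--function (equivalently, rational function) calculus together with the $L(-1)$--conjugation formula, but it is the most delicate bookkeeping in the argument; the other three cases are immediate once the fourfold decomposition is in place.
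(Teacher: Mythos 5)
Your verification is correct, and it is essentially the standard argument: the paper itself gives no proof of this lemma, quoting it from \cite{AdM-2012}, where it is established by exactly this kind of direct check that the Jacobi identity for $Y_{M_2\oplus M_3}$ reduces, upon splitting the arguments into $V$- and $M$-components, to the module Jacobi identities for $M_2$, $M_3$, the intertwining-operator Jacobi identity for $\mathcal Y$ (in its ``swapped'' form obtained via $L(-1)$-conjugation and $\delta$-function calculus), and the vanishing of all terms with two $M$-inputs. You have correctly identified the role of the integrality hypothesis and the only delicate step (the case $x\in M$, $y\in V$), so nothing essential is missing.
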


Now let  $s \in \mathcal V$ such that $L(n) s = \delta_{n,0} s$ for $n \in {\Z}_{\ge 0}$ and

\bea  [Y(s, z_1) , Y(s, z_2)] = 0. \label{uvjet-1}  \eea
Note that if $s \in M$, the condition (\ref{uvjet-1}) is always satisfied.

Then $S = s_0 = \mbox{Res}_z Y(s,z)$ is a screening operator, and $\overline V = \mbox{Ker}_V (S)$ is a vertex subalgebra of $V$. We recall the construction of logarithmic modules from \cite{AdM-selecta}.

\begin{proposition} \label{log-koncept} \cite{AdM-selecta} Assume that $(U, Y_U)$ is any $\mathcal V$--module. Then
$$ (\widetilde U, Y_{\widetilde U}(\cdot, z)) := (U, Y_U(\Delta(s,z) \cdot, z))$$
is a  $\overline V$--module, where
$$ \Delta (s, z) = z^{s_0} \exp \left( \sum_{n=1} ^{\infty} \frac{s_n}{-n} (-z) ^{-n}\right). $$
In particular,
$\widetilde L(z) = \widetilde Y_{\widetilde M} (\omega  , z) = L  (z) + z^{-1} Y_M(s, z)$,
and $\widetilde L(0) = L(0) + S$.
\end{proposition}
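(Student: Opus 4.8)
The statement is the module version of Li's $\Delta$--operator construction (\cite{Li}), in the form developed in \cite{AdM-selecta}; the plan is to verify that $Y_{\widetilde U}(a,z):=Y_U(\Delta(s,z)a,z)$ defines a weak $\overline V$--module structure on $U$, and then to read off the deformed conformal field. First I would record the consequences of the hypotheses. Expanding the commutator formula $[s_m,Y(a,z)]=\sum_{j\ge0}\binom{m}{j}z^{m-j}Y(s_ja,z)$, condition (\ref{uvjet-1}) is equivalent to $s_js=0$ for all $j\ge0$, whence $[s_m,s_n]=0$ for all $m,n\in\Z$; this is exactly the hypothesis under which $\Delta(s,z)$ is well behaved. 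The case $m=0$ gives $[s_0,Y(a,z)]=Y(s_0a,z)$, so $S=s_0$ is a derivation of $V$ and $\overline V=\Ker_V(S)$ is a vertex subalgebra. Using skew--symmetry together with $L(n)s=\delta_{n,0}s$ ($n\ge0$) one computes $s_0{\bf 1}=0$, $s_0\omega=0$, $s_1\omega=s$ and $s_n\omega=0$ for $n\ge2$; in particular ${\bf 1},\omega\in\overline V$, so $\overline V$ is a \emph{conformal} vertex subalgebra. Finally, for $a\in\overline V$ we have $s_0a=0$, so $z^{s_0}a=a$ and $\Delta(s,z)a$ involves only integral powers of $z$; since $s_na=0$ for $n\gg0$ (truncation axiom of $\mathcal V$) the relevant sums are locally finite and $Y_U(\Delta(s,z)a,z)$ is an honest field on $U$, so ``any $\mathcal V$--module'' is indeed allowed. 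In the case used later $s\in M$, and then $s_m(s_na)=0$ for all $m$, so $\Delta(s,z)a=a+\sum_{n\ge1}\frac{s_n}{-n}(-z)^{-n}a$ already truncates at the term linear in the modes of $s$.

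The core step is the $\Delta$--operator calculus from \cite{Li}: the identity $\Delta(s,z)L(-1)a=(\pd_z+L(-1))\Delta(s,z)a$, and the conjugation relation $\Delta(s,z_1)\,Y(a,z_2)\,\Delta(s,z_1)^{-1}=Y(\Delta(s,z_1+z_2)a,z_2)$ (in the standard formal sense), applied both to the vertex operation of $\mathcal V$ and to its action on $U$; both are proved by expanding the two sides and matching coefficients via the iterate and commutator formulas, the only inputs being $s_js=0$ ($j\ge0$) and that $U$ is a $\mathcal V$--module. Granting this, the vacuum and $L(-1)$--derivative axioms for $Y_{\widetilde U}$ are immediate --- the latter from the displayed identity together with the product rule $\pd_zY_U(\Delta(s,z)a,z)=Y_U(\pd_z\Delta(s,z)a,z)+Y_U(L(-1)\Delta(s,z)a,z)$ --- and the Jacobi identity for $(U,Y_{\widetilde U})$ is obtained from the Jacobi identity for $(U,Y_U)$ by the change of variables dictated by the conjugation relation. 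Restricting all vectors to $\overline V$, where $z^{s_0}$ acts as the identity so that only integral powers of $z$ occur, one concludes that $(\widetilde U,Y_{\widetilde U})$ is an untwisted weak $\overline V$--module.

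For the last assertion I would use the computation already recorded, $\Delta(s,z)\omega=\omega+sz^{-1}$: substituting into $Y_U(\cdot,z)$ gives $\widetilde L(z)=L(z)+z^{-1}Y_M(s,z)$, and comparing coefficients of $z^{-2}$ yields $\widetilde L(0)=L(0)+s_0=L(0)+S$. The main obstacle is the bookkeeping in the conjugation relation for $\Delta(s,z)$ and in transporting the Jacobi identity through it; the one point not present in Li's original semisimple setting is that $s_0$ need not act semisimply on $U$ --- which is exactly what creates the logarithmic structure of $\widetilde U$ --- but since we only claim a $\overline V$--module structure and $s_0$ annihilates $\overline V$, the twist $z^{s_0}$ is trivial on the relevant vectors and the argument of \cite{AdM-selecta} applies. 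I would cite \cite{AdM-selecta} for the remaining details.
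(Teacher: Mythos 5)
Your proposal is correct and follows essentially the same route as the source the paper cites: the paper gives no proof of Proposition \ref{log-koncept}, deferring entirely to \cite{AdM-selecta}, and your argument is a faithful reconstruction of that proof (Li's $\Delta$--operator calculus plus the observation that $s_js=0$ for $j\ge 0$ makes $\Delta(s,z)$ well behaved and that $z^{s_0}$ is trivial on $\overline V=\Ker_V(S)$). Your computations $s_0\omega=0$, $s_1\omega=s$, $s_n\omega=0$ for $n\ge 2$, hence $\Delta(s,z)\omega=\omega+z^{-1}s$ and $\widetilde L(0)=L(0)+S$, are exactly the verification needed for the final assertion.
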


\subsection{Weyl vertex algebra $W$}
\label{sub-sect-weyl} Recall that the   Weyl algebra  {\it Weyl} is an associative algebra with generators
$ a^+(n), a^{-} (n) \quad (n \in {\Z})$ and relations
$$  [a^+(n), a^{-} (m)] = \delta_{n+m,0}, \quad [a^{\pm} (n), a ^{\pm}(m)]  =  0 \quad (n,m \in {\Z}). $$
Let $\mathcal W$ denote the simple {\it Weyl}--module generated by the cyclic vector ${\bf 1}$ such that
$$ a^+(n) {\bf 1} = a  ^- (n+1) {\bf 1} = 0 \quad (n \ge 0). $$
As a vector space $ W \cong {\C}[a(-n), a^*(-m) \ \vert \ n >0 , \ m \ge 0 ]. $
There is a  unique vertex algebra $(\mathcal W, Y, {\bf 1})$ where
the  vertex operator map is $ Y: \mathcal W  \rightarrow \mbox{End} (W )[[z, z ^{-1}]] $
such that
$$ Y (a^+(-1) {\bf 1}, z) = a^+(z), \quad Y(a^- (0) {\bf 1}, z) = a ^- (z),$$
$$ a^+ (z)   = \sum_{n \in {\Z} } a^+ (n) z^{-n-1}, \ \ a^{-}(z) =  \sum_{n \in {\Z} } a^{-}(n)
z^{-n}. $$
Vertex algebra $\mathcal W$ is called the Weyl vertex algebra or the $\beta \gamma $ vertex algebra.

\subsection{The vertex algebra $\Pi$}
\label{pi-voa}

Consider the rank two lattice $D = {\Z} c + {\Z} d $ such that $$\langle c , c \rangle = \langle d, d \rangle =0, \ \langle c, d \rangle =2. $$
Let $V_D = M_{c, d} (1) \otimes {\C} [D]$ be the associated vertex algebra, where   ${\C}[D]$ is the group algebra associated to the lattice $D$ and $M_{c,d}(1)$ is the Heisenberg vertex algebra generated with fields
$$ c(z) = \sum _{n \in {\Z} } c(n) z^{-n-1}, \ d(z) = \sum _{n \in {\Z} } d(n) z^{-n-1}, $$
with the commutation relations
$$ [c(n), c(m) ]= [d(n), d(m)] =0, \ [c(n), d(m) ] = 2 n \delta_{n+m, 0}. $$
The vertex algebra $\Pi$ is realized as a subalgebra of $V_D$:
$$ \Pi := M_{c,d} (1) \otimes {\C} [\Z c]. $$
This  vertex algebra was  introduced in \cite{DBT} and called the half-lattice vertex algebra.
Consider the  following irreducible, weight $\Pi$--module $\Pi_{r} (\lambda) = \Pi. e^{r \mu+ \lambda c}$, where $\mu = (c-d) /2$, $r \in {\Z}$, $\lambda \in {\C}$.
 Then we have the following well known  fusion rules (see \cite{DL, DBT,  AP, Wood-21}):
{$$ \Pi_{r_1} (\lambda_1)  \times \Pi_{r_{2} } (\lambda_2) = \Pi_{r_1 + r_2} (\lambda_1 + \lambda_2), \quad (r_1, r_2  \in {\Z}, \lambda_1, \lambda_2 \in {\C}). $$}

 \section{Heisenberg-Virasoro VOA of level zero and its fusion rules}
\label{Hvir-sect}

Recall that the twisted Heisenberg--Virasoro algebra is an infinite--dimensional complex Lie algebra $HVir$ with basis
$$
\{T_{HVir} (n),I_{HVir} (n):n\in\mathbb{Z}\}\cup\{C_{L},C_{LI},C_{I}\}
$$
and commutation relation:
\bea
&& \left[   T_{HVir} (n),T_{HVir}(m)\right]  =(n-m)T_{HVir} (n+m)+\delta_{n,-m}\frac{n^{3}-n}{12}%
C_{L},\\
&& \left[  T_{HVir}(n),I_{HVir} (m)\right]  =-mI_{HVir}(n+m)-\delta_{n,-m}( n^{2}+n)
C_{LI},\\
 && \left[  I_{HVir}(n),I_{HVir}(m)\right]  =n\delta_{n,-m}C_{I},\\ && \left[  {HVir} ,C_{L}\right]  =\left[  {HVir},C_{LI}\right]  =\left[ {HVir} ,C_{I}\right]  =0.
\eea

Let the central elements act as identity: $C_L = c_L Id, C_I = c_I Id, C_{LI} = c_{LI} Id, c_L,c_I,c_{LI} \in {\C}$.
Let $V^{HVir}(c_{L},c_{I},c_{L,I},h_I,h)$ denote the Verma module with highest
weight $(c_{L},c_{I},c_{L,I},h_I,h)$, and  $L^{HVir}(c_{L},c_{I},c_{L,I},h,h_{I})$ its irreducible quotient (cf.\ \cite{Billig}), where $ h_I, h \in {\C}$ are highest weights with respect to the action of $I_{HVir}(0),T_{HVir}(0)$, respectively.
Let $V^{HVir} (c_L, c_{I}, c_{L, I}) $  be   the universal vertex algebra associated to  $HVir$. If $c_{I} = 0, c_{L, I} \ne 0$, then $V^{HVir} (c_L, 0, c_{L, I})$ is simple, and we set $L^{HVir} (c_L, c_{L,I}) = V^{HVir} (c_L, 0, c_{L, I})$.

In this paper we consider the case:

$$ c_L=2, c_{L,I} = 1; c_I=0. $$ For simplicity we shall denote the Verma module $V^{HVir}(c_{L}, 0,c_{L,I},h_I,h)$ with $V^{HVir} [h_{I}, h]$ and its irreducible quotient by $L^{HVir}[h_{I}, h]$.
Then $L^{HVir}:= L^{HVir}[0,0]$ is a simple vertex operator algebra (cf. \cite{AR, Billig}).

\begin{remark} The  general Heisenberg-Virasoro vertex algebra  with $c_I =0$, and $c_{L,I} \ne 0$ is considered in   \cite{AR, Billig}. But it is always isomorphic to $L^{HVir}$.
More precisely, let $L, I$ be generators of $L^{HVir}$, and let $\bar L, \bar I$ be generators of  $L^{HVir} (c_L, c_{L,I})$. One shows that
 \bea
&&f: L^{HVir} (c_L, c_{L,I})\rightarrow  L^{HVir} \nonumber \\
&& \bar L \mapsto  L - \frac{c_L-2}{24} \partial  I, \ \
\bar  I \mapsto  c_{L,I}  I \nonumber
\eea
is an isomorphism of vertex algebras.
\end{remark}

Let $L^{HVir} [x,y]$ denote the irreducible, highest weight $L^{HVir}$--module with highest weight $(x,y)$ and highest weight vector $v_{x,y}$ such that
$$ I_{HVir} (0) v_{x,y} = x v_{x,y}, \quad  T_{HVir} (0) v_{x,y} = y v_{x,y}. $$

The characters of irreducible, highest weight modules were obtained in  \cite{Billig}:
\begin{itemize}
\item If $x \in {\C} \setminus {\Z}$, or $x=1$, then $V^{HVir}[x,y]$ is irreducible and  $$\mbox{ch}_q L^{HVir} [x,y] =  q^{y-1/12}\prod_{j=1} ^{\infty} (1-q^j) ^{-2}.$$
\item If  $x \in {\Z} \setminus \{1\}$, set $ \vert x - 1\vert = p $. Then $V^{HVir}[x,y]$ possesses a singular vector of degree $p$ which generates the maximal submodule and   $$\mbox{ch}_q L^{HVir} [x,y] =q^{y- 1/12}  (1 - q^p) \prod_{j=1} ^{\infty} (1-q^j) ^{-2}. $$

\end{itemize}

Let  $\Delta_{r,s} =  (r+1)s$. We shall need the following  fusion rules results from \cite{AR}:
\bea L^{HVir} [-1,0] \times L^{HVir}[ -r, \Delta_{r,s}] &=&  L^{HVir} [-r-1, \Delta_{r+1, s}] \ \mbox{for}\  r \notin   \Z_{< 0}, \nonumber \eea
Since $\Delta_{r+1, s} = \Delta_{r,s} +  s$ we get fusion rules
$$   L^{HVir} [-1,0] \times L^{HVir}[ -r, y ]  = L^{HVir} [-r-1, \frac{r+2}{r+1} y], \quad  y = \Delta_{r,s}, \ r \notin   \Z_{< 0}. $$
 Since $( L^{HVir} [x,y])^{*} = L^{HVir} [2-x, y]$, using the adjoint intertwining operator we get the fusion rules:
$$   L^{HVir} [-1,0] \times L^{HVir} [r+3, \frac{r+2}{r+1} y]  =  L^{HVir}[ r+2, y ]   \quad  y = \Delta_{r,s}, \ r \notin   \Z_{< 0}. $$

\section{ Affine Nappi-Witten vertex operator  algebra }
\label{affine-nw-voa}

In this section we review basic definitions and properties of the  affine Nappi-Witten vertex algebra  following papers \cite{BJP, BKRS}. The terminology and results used  are mostly  from \cite{BKRS}. In the following section we shall interpret these results via inverse QHR.

\vskip 5mm
Let $\mathfrak h_4$ be  the four-dimensional complex Lie algebra with basis $\{E,F, I,J\}$ whose nonzero Lie
brackets are, modulo antisymmetry, as follows:
$$  [E,F] = I, \  [J,E] = E,\  [J,F] = -F. $$
Note that $I$ is in the center of $\mathfrak h_4$.

As in \cite{BKRS}, we fix the symmetric non-degenarate invariant  bilinear form:
$$ ( E, F ) =  (I, J ) = 1,   (I, I ) = 0, $$
and all other products are zero.

Lie algebra $\mathfrak h_4$  has the following triangular decomposition
$\mathfrak h_4 = {\mathfrak h_4} ^+ \oplus  {\mathfrak h_4} ^0 \oplus  {\mathfrak h_4} ^-, $
where
$$ {\mathfrak h_4} ^+  = {\C} E, \  {\mathfrak h_4} ^-  = {\C} F,  \ {\mathfrak h_4} ^0 = {\C} I + {\C} J. $$
Define  $\mathfrak b ^{\pm} =  {\mathfrak h_4} ^0  + {\mathfrak h_4} ^{\pm}$.
For any $(i,j) \in {\C}$, define
$$ \mathcal{V}_{i,j}^{\pm} = U(\mathfrak h_4) \oplus_{ U(\mathfrak b^{\pm})} {\C} v_{i,j}, $$
where $  {\C} v_{i,j} $ is a $\mathfrak b^{\pm} $--modules such that $ I v_{i,j} = i v_{i,j}$, $J v_{i,j} = j v_{i,j}$, and
$ {\mathfrak h_4} ^{\pm}$ acts as $0$.  Module $\mathcal V_{i,j} ^+ $ is called the (highest weight) Verma $\mathfrak h_4$--module, and  $\mathcal V_{i,j} ^-$ the lowest weight Verma module.

The Verma module  $\mathcal V_{i,j} ^+$ is irreducible for  $i \ne 0$. For $i = 0$, we have  a family of irreducible $1$--dimensional $\mathfrak h_4$--modules realized as follows:
$$  \mathcal L_{0,j} = \mathcal V_{0,j} ^+ / \mathcal V_{0,j-1}^+. $$

Lie algebra $\mathfrak h_4$ also has a family of irreducible modules with $1$-dimensional weight spaces which are neither highest nor lowest weight modules.  These modules were described in \cite{BKRS}.  Set  $ \Omega = F E  + IJ$. Then $\Omega $ is a central element of $U(\mathfrak h_4)$. Consider the irreducible $1$--dimensional ${\C}[I, J, \Omega]$--module ${\C} w_{i,j, h}$ such that $$I w_{i,j, h} = i w_{i,j,h}, \ J w_{i,j, h} = j w_{i,j,h}, \  \Omega  w_{i,j,h} = h w_{i,j, h}.$$
Then we have  the induced module
$ \mathcal R_{i,j;h} = \mbox{Ind} _{\C[I,J,\Omega ]} ^{U(\mathfrak h_4)} {\C} w_{i,j, h}$.
It was proved in \cite[Proposition 2]{BKRS} that $ \mathcal R_{i,[j];h}$ is irreducible iff $i,h \in {\C}$,  $[j] \in {\C} / {\Z}$, $h \notin i [j]$.
If $i\ne 0$, $ h \in i [j]$, then  $ \mathcal R_{i,[j];h}$ is reducible, and it can contain a unique highest weight submodule or a lowest weight submodule; we shall denote the former with
$ \mathcal R^+_{i, h}$, and the latter with  $ \mathcal R^-_{i,h}$.

Let $\widehat{\mathfrak h}_4= \mathfrak h_4 \otimes {\C}[t, t^{-1} ] \oplus {\C} K$
be the associated affine Lie algebras with the commutation relation
$$ [x(n), y(m)] = [x,y](n+m) + n { (x , y) }\delta_{n+m,0} K, $$
where  we set $x(n) = x \otimes t^n$ and $K$ is central element. We will identify $\mathfrak h_4$ with $\mathfrak h_4 \otimes t^0$.

 Then   there is a universal affine vertex algebra $V^k(\mathfrak h_4)$ of level $k$ which is generated by the fields (cf. \cite{Kac, LL, BKRS})
$$ x(z) = \sum _{n\in {\Z}} x(n) z^{-n-1}, \quad x \in \mathfrak h_4. $$

Consider the following triangular decomposition  $\widehat{\mathfrak h}_4=\widehat{\mathfrak h_4}^+ \oplus \widehat{\mathfrak h_4}^0 \oplus \widehat{\mathfrak h_4}^-$, where
\begin{eqnarray}
&& \widehat{\mathfrak h}^{\pm} ={\mathfrak h} \otimes  t^{\pm 1} {\C}[t^{\pm 1}], \quad
 (\widehat{\mathfrak h}_4) ^0 = \mathfrak h_4 + {\C} K. \label{trdec}
\end{eqnarray}

Let $k \in {\C}$ and $U$ be any $\mathfrak h_4$--module. Then we can consider $U$ as $\widehat{\mathfrak p} = \mathfrak  h_4 +   \widehat{\mathfrak h}^{+} $--module with $\widehat{\mathfrak h}^{+}$ acting trivially and $K$ acting as $k \mbox{Id}$. Then we have the induced module $$V_{\widehat{\mathfrak h}_4} (k, U)  = U(\widehat{\mathfrak h}_4) \otimes _{U(\widehat{\mathfrak p})} U.$$

Using the fact that any $\widehat{\mathfrak h}_4$ module with $K = k {\bf{1}}, k\in \mathbb{C}\setminus \{0\}$ is isomorphic to the module with $k=1$, we shall fix this $k$ value in what follows and consider the algebra $V^1 (\mathfrak h_4)$.  We write
$M(U) =  V_{\widehat{\mathfrak h}_4} (1, U)$.  Top component (also called the  the space of ground states) is then isomorphic to $U$.
Let $J(U)$ be the sum of all $V^1 (\mathfrak h_4)$--submodules of $M(U)$   that have zero intersection with the top component.  Define the quotient module
$$ L(U) = M(U) / J(U). $$
If $U$ is irreducible $\mathfrak h_4$--module, then $L(U)$ is irreducible $V^1 (\mathfrak h_4)$--module. Note that even if $U$ is not irreducible, then $L(U)$ is also well-defined, but reducible.

Now we have the following  $\widehat{\mathfrak h}_4$--modules (which are also   $V^1 (\mathfrak h_4)$--modules):
\begin{itemize}
\item Verma modules $\widehat{\mathcal V } ^{\pm} _{i,j} = M( \mathcal V_{i,j}) $,  their irreducible quotients  $\widehat{\mathcal L } ^{\pm} _{i,j} = L( \mathcal L^{\pm} _{i,j}) $, {$i\in\mathbb C\setminus 0$}.
\item Modules $\widehat{ \mathcal L}_{0,j} = L( \mathcal L_{0,j})$ whose top components are $1$--dimensional.
\item Relaxed modules $\widehat {\mathcal R}_{i, [j], h} = M (\mathcal R_{i, [j], h})$, and their quotients   $\widehat {\mathcal E}_{i, [j], h} = L (\mathcal R_{i, [j], h})$.
\item Reducible relaxed modules  $\widehat {\mathcal R}^{\pm}_{i, , h}$, and their (also reducible) quotients   $\widehat {\mathcal E}^{\pm} _{i, , h} = L (\mathcal R^{\pm} _{i, h})$.
\end{itemize}

Note that $\widehat{\mathcal{L}}_{0,0}= \widehat{\mathcal{L}}_{0,0}^+ \cong \widehat{\mathcal{L}}_{0,0}^- = V^1(\mathfrak h_4)$.   It was proved in \cite[Prop. 4]{BKRS} that $V^1 (\mathfrak h_4)$ is a simple vertex operator algebra and   has the conformal vector
 $$ \omega  ^{sug}  _{\mathfrak h_4} =   \left( E(-1)  F(-1)  +  I(-1)  J(-1)  -\frac{1}{2} I(-2)   - \frac{1}{2} I(-1) ^2 \right){\bf 1}.$$
{We set
$$ L^{sug} _{\mathfrak h_4} (z) = Y( \omega  ^{sug}  _{\mathfrak h_4}, z) = \sum _{n \in {\Z}}  L^{sug}_ {\mathfrak  h_4} (n) z^{-n-2}. $$ 
}




   We recall the following important result from \cite{BKRS}.
\begin{theorem} \label{main-bkrs} \cite[Theorems  6 and 8]{BKRS}
  Every irreducible relaxed highest weight  $V^1 (\mathfrak h_4)$-module 
     is isomorphic to one, and only one, of the following modules:
\begin{itemize}

		\item $\widehat{\mathcal L}_{0,j}$, with $j\in\mathbb C$.
		\item $\widehat{\mathcal L}_{i,j}^\pm$, with $i\in\mathbb C\setminus 0$ and $j\in\mathbb C$.
		\item $\widehat{\mathcal E}_{i,[j],h}$, with $i\in\mathbb C\setminus 0$, $[j]\in\mathbb C/\mathbb Z$, $h\in\mathbb C$ and $h/i \notin [j]$.
		\item $\widehat{\mathcal R}_{0,[j],h}$, with $[j]\in\mathbb C/\mathbb Z$ and $h\in\mathbb C\setminus 0$.
	\end{itemize}
 Module $\widehat {\mathcal E}^{\pm} _{i, , h} $ for $i \ne 0,  -1$,  is of length two and we have the following
nonsplit short exact sequences:  	
	\bea
	0 \rightarrow  \widehat{\mathcal L}_{i,h/i}^+ \rightarrow  \widehat {\mathcal E}^{+ } _{i, , h} \rightarrow  \widehat{\mathcal L}_{i,h/i+1}^- \rightarrow 0,  \quad \
	0 \rightarrow  \widehat{\mathcal L}_{i,h/i+1}^- \rightarrow  \widehat {\mathcal E}^{- } _{i, , h} \rightarrow  \widehat{\mathcal L}_{i,h/i}^+ \rightarrow 0.\nonumber
	\eea
\end{theorem}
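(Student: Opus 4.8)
The plan is to reduce the classification to Zhu's algebra together with a purely finite-dimensional problem for $\mathfrak h_4$, and then to transport the structure of the reducible $\mathfrak h_4$-modules to the affine level. Since $V^k(\mathfrak h_4)$ is simple for every $k\neq0$, $V^1(\mathfrak h_4)$ is the universal affine vertex algebra at level $1$, so $A(V^1(\mathfrak h_4))\cong U(\mathfrak h_4)$. For a relaxed highest weight module $M$ the $L^{sug}_{\mathfrak h_4}(0)$-grading is bounded below, the top component $M_{\mathrm{top}}$ is a $U(\mathfrak h_4)$-module, and (using that $U$ irreducible implies $L(U)$ irreducible) the assignments $M\mapsto M_{\mathrm{top}}$ and $U\mapsto L(U)$ are mutually inverse bijections between isomorphism classes of irreducible relaxed highest weight $V^1(\mathfrak h_4)$-modules and of irreducible weight $\mathfrak h_4$-modules with finite-dimensional weight spaces. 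The classification part is thus equivalent to the classification of the latter.

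On an irreducible weight $\mathfrak h_4$-module $U$ the central element $I$ acts by a scalar $i$, and $J$ acts semisimply with weight lines $u_\ell$ linked by $E$ (raising $J$-weight by $1$) and $F$ (lowering it by $1$). If $i\neq0$ the central element $\Omega=FE+IJ$ acts by a scalar $h$, and on each line $FEu_\ell=(h-i\ell)u_\ell$ and $EFu_{\ell+1}=(h-i\ell)u_{\ell+1}$; hence the composite of the two arrows joining the lines of $J$-weights $\ell$ and $\ell+1$ vanishes only at $\ell=h/i$. Consequently, if $h/i\notin[\ell]$ both arrows act bijectively throughout the coset and $U\cong\mathcal R_{i,[\ell];h}$; if $h/i\in[\ell]$ the chain breaks at the pair $(h/i,\,h/i+1)$, producing a highest weight vector at $J$-weight $h/i$ or a lowest weight vector at $J$-weight $h/i+1$, and $\mathcal V^{\pm}_{i,\cdot}$ is already irreducible for $i\neq0$ since $EF^{n}v=ni\,F^{n-1}v\neq0$ on a highest weight vector $v$. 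If $i=0$ then $\Omega=FE$, and the analogous analysis yields the $1$-dimensional modules $\mathcal L_{0,j}$ and, for $h\neq0$, the dense module $\mathcal R_{0,[j],h}$. This recovers the list of \cite{BKRS, KR}, and $U\mapsto L(U)$ carries it to the four families in the statement.

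For the final assertion, fix $i\neq0$ and $h\in i[j]$. By the break at $(h/i,\,h/i+1)$, $\mathcal R_{i,[j];h}$ contains a unique highest weight submodule $\mathcal R^{+}_{i,h}$ or a unique lowest weight one $\mathcal R^{-}_{i,h}$, and the weight-line description gives nonsplit short exact sequences of $\mathfrak h_4$-modules
\[
0\to\mathcal V^{+}_{i,h/i}\to\mathcal R^{+}_{i,h}\to\mathcal V^{-}_{i,h/i+1}\to0,\qquad
0\to\mathcal V^{-}_{i,h/i+1}\to\mathcal R^{-}_{i,h}\to\mathcal V^{+}_{i,h/i}\to0,
\]
with $(\mathcal R^{+}_{i,h})^{\vee}\cong\mathcal R^{-}_{i,h}$; nonsplitness follows from the nonvanishing of the connecting arrow ($F$ from the generator of the lowest-weight part onto that of the highest-weight part, resp.\ $E$). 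Applying $M(-)$ and then $L(-)=M(-)/J(-)$, the module $\widehat{\mathcal E}^{\pm}_{i,,h}=L(\mathcal R^{\pm}_{i,h})$ has top component $\mathcal R^{\pm}_{i,h}$, an indecomposable $\mathfrak h_4$-module; since by construction every nonzero submodule of any $L(U)$ meets the top component, $\widehat{\mathcal E}^{\pm}_{i,,h}$ is indecomposable, so once length two is established the extension is nonsplit and the two sequences are contragredient to one another. To identify the composition factors as $\widehat{\mathcal L}^{+}_{i,h/i}$ and $\widehat{\mathcal L}^{-}_{i,h/i+1}$, and the length as exactly two, one checks that the $V^1(\mathfrak h_4)$-submodule of $\widehat{\mathcal E}^{+}_{i,,h}$ generated by the image of the highest weight vector of $\mathcal V^{+}_{i,h/i}$ is a quotient of $M(\mathcal V^{+}_{i,h/i})$ on which the maximal submodule dies---hence is $\widehat{\mathcal L}^{+}_{i,h/i}$---and that the corresponding quotient is $\widehat{\mathcal L}^{-}_{i,h/i+1}$; equivalently, one matches the characters, which via QHR reduce to the $L^{HVir}$ character formulas of Section~\ref{Hvir-sect}. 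The hypothesis $i\neq0,-1$ enters precisely here: at $i=-1$ the induced modules $M(\mathcal V^{\pm}_{-1,\cdot})$ carry an additional singular vector---the analogue of the degeneracy responsible for the singular vector of $L^{HVir}[x,y]$ at integer $x\neq1$, here $x=-1$ and $p=2$---so the length and Loewy structure change and those modules are excluded.

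The only genuinely delicate point is this last step: controlling the maximal graded submodule $J(\mathcal R^{\pm}_{i,h})$ well enough that $L(-)$ neither over-collapses (length dropping below two) nor leaves extra composition factors (length exceeding two), and that the two factors are the \emph{simple} affine modules $\widehat{\mathcal L}^{\pm}$. This requires either explicit knowledge of the singular vectors of $M(\mathcal V^{\pm}_{i,\cdot})$ or an indirect argument via contragredient duality together with a character identity; the same computation is what isolates the exceptional level $i=-1$.
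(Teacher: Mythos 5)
First, note that the paper offers no proof of this statement: Theorem \ref{main-bkrs} is imported verbatim from \cite[Theorems 6 and 8]{BKRS}, so there is no in-paper argument to compare against. Your overall strategy --- reduce via Zhu's algebra $A(V^1(\mathfrak h_4))\cong U(\mathfrak h_4)$ to the classification of irreducible weight $\mathfrak h_4$-modules, carry out that classification using the central elements $I$ and $\Omega=FE+IJ$ and the relations $FEu_\ell=(h-i\ell)u_\ell$, $EFu_{\ell+1}=(h-i\ell)u_{\ell+1}$, and then transport the structure through $U\mapsto L(U)$ --- is the natural and essentially correct route, and the finite-dimensional part of your argument (including the irreducibility of $\mathcal V^{\pm}_{i,j}$ for $i\neq0$ via $EF^nv=niF^{n-1}v$, and the reduction of the $i=0$ case to $\mathcal L_{0,j}$ and $\mathcal R_{0,[j],h}$) is sound, modulo the routine observation that weight spaces of irreducibles are one-dimensional because the commutant of $\{I,J\}$ is generated by the central elements $I,J,\Omega$.

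There are, however, genuine gaps exactly where the affine structure enters. (i) The theorem asserts that $\widehat{\mathcal R}_{0,[j],h}=M(\mathcal R_{0,[j],h})$ --- the full generalized Verma module, not its quotient $L(\mathcal R_{0,[j],h})$ --- is irreducible for $h\neq0$; your bijection only produces $L(\mathcal R_{0,[j],h})$ and never shows $J(\mathcal R_{0,[j],h})=0$, which is the irreducibility criterion for generalized Verma modules from \cite{BJP} and requires a separate argument. (ii) The length-two claim for $\widehat{\mathcal E}^{\pm}_{i,,h}$ and the identification of its factors as the \emph{simple} modules $\widehat{\mathcal L}^{+}_{i,h/i}$ and $\widehat{\mathcal L}^{-}_{i,h/i+1}$ is precisely the content of the second half of the theorem, and you acknowledge leaving it as ``one checks'' via singular vectors or characters; as written this is a restatement of what must be proved, not a proof. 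A workable route inside this paper is Theorem \ref{thm:relax}(3) together with Lemma \ref{str-33}, which realize $\widehat{\mathcal E}^{-}_{i,h}$ as $L^{HVir}[x,y]\otimes\Pi_1(\lambda)$ with $x=i+1$ and read off the length-two filtration from the almost-irreducibility of Lemma \ref{simple}; but that machinery is developed \emph{after}, and partly in reliance on, Theorem \ref{main-bkrs}. (iii) Your explanation of the exclusion $i\neq-1$ is not correct as stated: under the dictionary $i=x-1$ the case $i=-1$ corresponds to $x=0$ (so $p=|x-1|=1$, not ``$x=-1$, $p=2$''), and what actually goes wrong there is not an extra singular vector of the same type but the appearance of $\widehat{\mathcal L}_{0,j}$-type composition factors via the spectral flow identity $g(V^1(\mathfrak h_4))=\widehat{\mathcal L}^{-}_{-1,1/2}$ (compare Corollary \ref{str-voa-relaxed}), so the length ceases to be two for a structurally different reason than the one you give.
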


One can also consider the spectral  flow automorphism $\sigma^{\ell}$, $\ell \in {\Z}$, given by
\bea
\sigma^{\ell}(E(n) )= E(n-\ell),   \sigma^{\ell}(I(n))  = I(n)- \ell \delta_{n,0} K, \sigma^{\ell} (J(n)) = J(n), \sigma^{\ell} (F(n)) = F(n+\ell), \sigma^{\ell} (K) = K; \nonumber
\eea
and automorphism $s_{t}$, $t \in {\C}$, given by

 \bea
s_t(E(n) )= E(n),  s_t(I(n)) = I(n),   s_t (J(n)) = J(n)  - t \delta_{n,0} K, s_t(F(n)) = F(n), s_t (K) = K. \nonumber
\eea

Note that  the automorphisms $\sigma^{\ell}$ and $s_t$ commute. We define the automorphism $g= \sigma \circ s_{t=-1/2}$.

Now let $h = \frac{I}{2} - J$. Then $h (1) h = - {\bf 1}$. Let $$\Delta(h, z) = z^{h(0)} \exp \sum_{n=1} ^{\infty} \left( \frac{h(n) }{-n} (-z)^{-n}\right). $$
For any $V^1 (\mathfrak h_4)$--module $(M, Y_M)$, we have a new $V^1 (\mathfrak h_4)$--module
$$ (\rho_{\ell} (M), Y_{\rho_{\ell} (M)} (\cdot, z)) :=(M, Y_M(\Delta(\ell h, z)\cdot, z)),$$
with the action of the generators given by
\bea   Y_{\rho_{\ell} (M)} (E, z) &=& z^{-\ell} Y_M (E, z) = z^{-\ell} E(z), \nonumber \\
 Y_{\rho_{\ell} (M)} (F, z)&=& z^{\ell} Y_M (F, z) = z^{\ell} F(z), \nonumber \\
  Y_{\rho_{\ell} (M)} (I, z) &=&   Y_M (I,  z) -z^{-1} \ell \mbox{Id}  = I(z)- z^{-1} \ell \mbox{Id}  \nonumber  \\
   Y_{\rho_{\ell} (M)} (J, z) &=&   Y_M (J,  z) + z^{-1}\frac{\ell}{2} \mbox{Id} = J(z) + z^{-1}\frac{\ell}{2} \mbox{Id}   \nonumber
 \eea
 One sees that  the $\rho_{\ell} (M)$ is obtained from $M$ by applying the automorphism $g^{\ell}$.
 Note also that $g (V^{1} (\mathfrak h_4) ) =  \widehat{\mathcal  L}^- _{-1, \frac{1}{2}}$.

   \section{Quantum Hamiltonian Reduction}
   In this section we shall  show that   the Quantum Hamiltonian Reduction (QHR) of $V^1 (\mathfrak h_4)$ is the Heisenberg-Virasoro algebra of level zero $L^{HVir}$. Our approach will use a version of the  Kac-Wakimoto-Roan construction  of the affine  $W$-algebra  $W^k(\g, x, f)$.

   \vskip 5mm

  For any simple Lie algebra $\g$   and pair $(x,f)$ such that $ad(x)$  is diagonalizable on $\g$,  $[x,f] = -f$ and $f$ is an even nilpotent element, Kac,  Wakimoto and Roan  in \cite{KWR, KW04} constructed the universal affine $W$--algebra $W^k(\g, x, f)$. They proved that $W^k(\g, x, f)$ are strongly generated by fields
  $J^{a_i}$, where $a_1, \dots, a_s$ is a basis
of the centralizer $g^{f}$ consisting of eigenvectors of $ad(x)$.

$W^k(\g, x, f)$ was constructed as the cohomology of the complex
$$ \mathcal C(\g,x,f,k) = (V^k(\g) \otimes F^{ch} \otimes F^{ne}, d_0),$$
where $V^k(\g)$ is the universal affine vertex algebra of level $k$ associated to $\g$, $F^{ch}$ is the free charged  fermion vertex algebra associated to $\g_+ + {(\g_+)^*}$, {where $(\g_+)^*$ is dual to $\g_+$}, $F^{ne}$ is the neutral free fermion vertex algebra based on $\g_{1/2}$, and  $d_0$ is an
 odd derivation of the vertex algebra $\mathcal C(\g,x,f,k) $ whose square is $0$ (see \cite{KW04} for the definitions).

 We want to extend their approach to  the  Nappi-Witten Lie algebra $\g= \mathfrak h_4$ and construct the complex    $\mathcal C(\g,x,f,k)$. Let us put here $k=1$, since all Nappi-Witten vertex algebras are isomorphic to $V^1 (\mathfrak h_4)$.  Although  the  Nappi-Witten Lie algebra $\g=\mathfrak h_4$ is not simple, we can find a pair $(x,f)$ for which the Kac-Wakimoto theorem still works. Let us take $x=J, f = F$. Then we have the gradation
\bea \g &=& \g_{-1} + \g_0 + \g_1, \ \g_{-1} = {\C} F, \ \g_1 = {\C} E, \ \g_0 = \mbox{span}_{\C} \{I, J\}, \nonumber \\
\g^f &= & \g_0 ^{f} + \g_{-1} ^{f}, \ \  \g_0 ^{f}  = {\C} I, \ \g_{-1} ^f = {\C} F,\nonumber  \\
\g_+ &= &{\C} E,  \ \ \g_- = {\C} F. \nonumber \eea
Note that $\g_{1/2} =\{0\}$. In our case   the complex is just
  $$ \mathcal C(\g,x,f) = (
  V^1 (\mathfrak h_4) \otimes F^{ch}  , d_0),$$
where
  $\mathcal F^{ch} = \mathcal F $ be a the  fermionic vertex superalgebra generated by the odd fields $\Psi^{+} (z) = \sum _{n \in {\Z} } \Psi^{+} (n) z^{-n-1}$,
 $\Psi^{-} (z) = \sum _{n \in {\Z} } \Psi^{-} (n) z^{-n}$   such that the components of the fields satisfies the anti--commutation relation for the Clifford algebra at level $K=1$:
 $$[\Psi^{\pm} (m), \Psi^{\pm}(n)]_+ =0, \quad [\Psi^{+} (m), \Psi^{-}(n)]_+ = \delta_{n+m, 0} K, \quad (m, n  \in   {\Z}).$$
 The conformal vector  of central charge $c=-2$ is $\omega_{fer} = \Psi^- (-1) \Psi^+ (-1) {\bf 1}$. So $\mathcal F$ becomes a ${\Z}_{\ge 0}$--graded vertex operator superalgebra of central charge $c=-2$.
  By using the boson-fermion correspodence, $\mathcal F= V_{\Z\alpha} =M_{\alpha} (1) \otimes {\C}[\Z \alpha]$, where  $\langle \alpha, \alpha \rangle  =1$,  $M_{\alpha} (1)$ is the Heisenberg vertex algebra generated by $\alpha(z)$, $\Psi ^{\pm} = e^{\mp \alpha}$.  The conformal vector is given by
 $$ \omega_{fer} = \frac{1}{2}\left(  \alpha(-1) ^2 - \alpha(-2) \right) {\bf 1}. $$

 The vertex superalgebra $\mathcal F$ has the charge decomposition $\mathcal F= \bigoplus_{\ell \in {\Z}} \mathcal F^{(\ell)}$ such that
 $ \mathcal F^{(\ell) } = \{ v \in \mathcal F \ \vert \ \alpha(0) v = - \ell v \}$.

 Next, we need to determine the derivation $d_0$ on the complex.
  Let  $$ d_{DS} = {(E(-1){\bf 1} + {\bf 1})} \otimes e^{\alpha}. $$ Then $d_0 =  \mbox{Res}_z d_{DS}(z) $ is the derivation of our complex  $\mathcal C(\g,x,f)$. As in \cite{KW04},  we  define the  vertex algebra
 $W(\g, x, f)$  as the zero cohomology of the complex  $\mathcal C(\g,x,f)$, so  $W(\g, x, f) =  H^0(\mathcal C(\g,x,f))$.  

 \begin{proposition}  \label{prop-ds}  In the above notation,
 $W(\mathfrak h_4, x, f) =   L^{HVir}$.
 \end{proposition}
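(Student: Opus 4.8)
The strategy is to compute the zeroth cohomology of the complex $\mathcal C(\mathfrak h_4,x,f)=(V^1(\mathfrak h_4)\otimes\mathcal F,d_0)$ by following the Kac--Wakimoto--Roan analysis of quantum Drinfeld--Sokolov reduction \cite{KW04,KWR}, and then to recognise the answer as $L^{HVir}$ via the description of $HVir$ recalled in Section~\ref{Hvir-sect}. First I would check that $d_0$ is a differential. Since $d_{DS}=(E(-1){\bf 1}+{\bf 1})\otimes e^{\alpha}$ is odd, $d_0^2=\tfrac12\{d_0,d_0\}$ is controlled by the singular part of the operator product $Y(d_{DS},z_1)\,Y(d_{DS},z_2)$; the $V^1(\mathfrak h_4)$-factor $(E(z_1)+1)(E(z_2)+1)$ has no singularity because $[E,E]=0$ and $(E,E)=0$, while the $\mathcal F$-factor $Y(e^{\alpha},z_1)\,Y(e^{\alpha},z_2)$ carries the factor $(z_1-z_2)^{\langle\alpha,\alpha\rangle}=(z_1-z_2)$; hence the product is regular and $d_0^2=0$. (The inhomogeneous term ${\bf 1}$ in $d_{DS}$, which encodes the pairing $(F,E)=1$, plays no role here but is exactly what makes the cohomology interesting.)

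Next I would run the Kac--Wakimoto--Roan argument. Even though $\mathfrak h_4$ is not semisimple, the pair $(x,f)=(J,F)$ satisfies the structural hypotheses: $\operatorname{ad}(x)$ is diagonalisable, $[x,f]=-f$, $f$ acts nilpotently in the adjoint, $(\mathfrak h_4)_{1/2}=0$, $(\cdot,\cdot)$ is nondegenerate and invariant, and $V^1(\mathfrak h_4)$ carries the Sugawara-type conformal vector $\omega^{sug}_{\mathfrak h_4}$. In the proof of \cite{KW04} the only place where simplicity (or reductivity) of the Lie algebra enters the vanishing of the higher cohomology and the free strong generation of $H^0$ is a spectral-sequence reduction to the Lie-algebra cohomology $H^{\bullet}((\mathfrak h_4)_{+},-)$; since $(\mathfrak h_4)_{+}=\mathbb C E$ is one-dimensional this is elementary, and the remainder of the argument is unchanged. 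I therefore conclude that $H^i(\mathcal C)=0$ for $i\neq 0$ and that $W(\mathfrak h_4,x,f):=H^0(\mathcal C)$ is freely strongly generated by two fields indexed by the basis $\{F,I\}$ of $\mathfrak h_4^{f}$: a field $\hat L$ of conformal weight $2$ attached to $f=F\in(\mathfrak h_4)_{-1}$, namely the conformal vector of $W(\mathfrak h_4,x,f)$, and a field $\hat I$ of conformal weight $1$ attached to $I\in(\mathfrak h_4)_0$. In particular $\dim H^0_{(1)}=1$ and $\dim H^0_{(2)}=2$.

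Then I would pin down cocycle representatives and the operator products. One checks directly that $I(-1){\bf 1}\otimes{\bf 1}$ is $d_0$-closed: because $[E,I]=0=(E,I)$, the field $Y(d_{DS},z)\bigl(I(-1){\bf 1}\otimes{\bf 1}\bigr)$ is holomorphic in $z$, so its residue vanishes; and its class generates $H^0_{(1)}$ — equivalently $\hat I$ is the Kac--Wakimoto--Roan current attached to $I$, with the fermionic corrections vanishing for the same reason. Since the operator product on $H^0$ may be computed with any cocycle representatives ($d_0$ being a derivation of the vertex algebra structure), one reads the OPEs off inside $V^1(\mathfrak h_4)\otimes\mathcal F$: the self-OPE of $\hat L$ is of Virasoro type with $c_L=2$ — the Sugawara part contributes $4$, the charged fermions $\mathcal F$ contribute $-2$, and the Dynkin twist by $J$ contributes $0$ because $(J,J)=0$, which is precisely the Kac--Wakimoto central-charge formula with the quadratic term killed by $(x,x)=0$; the field $\hat I$ is primary of conformal weight $1$ with trivial self-OPE, so $c_I=(I,I)=0$; and the triple pole of $\hat L(z)\hat I(w)$ equals $-2(I,J)(z-w)^{-3}=-2(z-w)^{-3}$, so $c_{L,I}=1$. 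Hence $\hat L$ and $\hat I$ satisfy the defining relations of the twisted Heisenberg--Virasoro algebra with $(c_L,c_I,c_{L,I})=(2,0,1)$.

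Finally, since $H^0=W(\mathfrak h_4,x,f)$ is generated by $\hat L$ and $\hat I$, these OPEs produce a surjective homomorphism of vertex algebras $V^{HVir}(2,0,1)\to H^0$; and as $c_I=0$ and $c_{L,I}=1\neq 0$, the universal algebra $V^{HVir}(2,0,1)$ is simple (Section~\ref{Hvir-sect}), so a nonzero quotient of it is an isomorphic copy, whence $W(\mathfrak h_4,x,f)\cong V^{HVir}(2,0,1)=L^{HVir}$. (Alternatively, one can avoid computing $c_L$ and $c_{L,I}$ by hand: the freeness statement shows $H^0$ has the same graded character as $V^{HVir}(2,0,1)=L^{HVir}$, which forces the surjection to be an isomorphism.) I expect the main obstacle to be the middle step — transplanting the Kac--Wakimoto--Roan vanishing and free-generation theorems to the non-reductive $\mathfrak h_4$ (conceptually this amounts only to replacing the relevant input by the trivial cohomology of the one-dimensional $(\mathfrak h_4)_{+}$, but it must be written out carefully), together with the bookkeeping of the OPE computation that yields exactly $c_L=2$ and $c_{L,I}=1$, where the vanishing of $(J,J)$ is doing the real work.
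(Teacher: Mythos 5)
Your proposal is correct and follows essentially the same route as the paper: both invoke the Kac--Wakimoto(--Roan) cohomology-vanishing and free-generation arguments (which transfer to $\mathfrak h_4$ because $\g_{+}=\C E$ is one-dimensional and $\g_{1/2}=0$), identify the two strong generators $\hat I = I$ and $\hat L = \omega^{sug}_{\mathfrak h_4}+\partial J+\omega_{fer}$ attached to the basis $\{I,F\}$ of $\g^{f}$, compute $(c_L,c_I,c_{L,I})=(2,0,1)$, and conclude using the simplicity of $V^{HVir}(2,0,1)=L^{HVir}$. Your additional checks ($d_0^2=0$, and the explicit surjectivity/simplicity argument closing the identification) are details the paper leaves implicit but do not change the approach.
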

 \begin{proof}
 { One can check that     $H^i(\mathcal C(\g,x,f)) = 0$ for $i \ne 0$  following proof of  \cite[Theorem 4.1]{KW04}. }
Then, following  \cite{KW04}   (see also  \cite[Theorem 4.1]{AMP}) one can show that there is a graded vector space isomorphism
 $$ { \Phi}: S(\widehat{\g^f} ) \cong  W(\g, x, f) $$
 where
 $$  \widehat{\g^f}  = \g_{-1} ^f  \otimes  t^{-2} {\C}[t^{-1}]  \oplus
\g_0 ^{f} \otimes  t^{-1} {\C}[t^{-1}], $$
 such that  $ W(\g, x, f)$ is freely generated by $\hat L = { \Phi} (F)$  and $ \hat I = { \Phi} (I)$.
  These fields are represented by
 $$\hat I  = I, \ \  \hat L  =      \omega ^{sug}  _{\mathfrak h_4} + \partial J + \omega_{fer}. $$

  One  sees  that  $\hat L $ is the Virasoro field of central charge $c_L = 2$,  $ \hat I  $  is the commutative Heisenberg field and that
 $$\hat  L(-1) \hat I  = \partial  \hat I, \hat L (0) \hat  I = I, \hat L(1)  \hat I = -2 c_{\hat L \hat I} {\bf 1}, $$
 for $c_{\hat L \hat I}= 1$. Therefore $\hat L, \hat I$ generate the Heisenberg Virasoro vertex algebra   $L^{HVir}$.  The proof follows. 
 \end{proof}

\section{Realisation of the affine Nappi-Witten vertex operator  algebra as QHR inverse of Heisenberg-Virasoro algebra at level zero }

In this section we shall prove that there is an embedding $V^1 (\mathfrak h_4) \hookrightarrow L^{HVir} \otimes \Pi$ which inverts the QHR from the previous section.   Our methods follow a similar approach to the one used in the realization of $V^k(\mathfrak{sl}_2)$ from \cite{A-2017}.
 Using the  Wakimoto realization of $V^1 (\mathfrak h_4) $ from  \cite{BJP} and the   bosonization of  the $\beta \gamma$ vertex algebra, we construct  a new realization of $V^1 (\mathfrak h_4) $ together with a corresponding screening operator. \vskip 5mm

Let $L = {\Z} \alpha + \Z {\beta}$ be the integral lattice with the following scalar products: $$\langle \alpha, \alpha \rangle  = - \langle \beta, \beta  \rangle =1, \quad  \langle \alpha, \beta \rangle = 0. $$
Let $V_L= M_{\alpha, \beta} (1) \otimes {\C}[L]$ be the associated lattice vertex algebra,  where $M_{\alpha, \beta} (1)$ is the Heisenberg vertex algebra generated by the fields
$$ \alpha(z) = \sum _{n \in {\Z}} \alpha(n) z^{-n-1}, \  \beta(z) = \sum _{n \in {\Z}} \beta(n) z^{-n-1}, $$
satisfying the following commutation relations:
$$ [\alpha(n), \alpha(m)] = n \delta_{n+m,0},  \   [\beta(n), \beta(m)] = -n \delta_{n+m,0},\  [\alpha(n), \beta(m)] = 0  \quad (n,m \in {\Z}), $$
and ${\C}[L]$ is the group algebra associated to the lattice $L$.

Then there is an embedding $  \mathcal W \hookrightarrow \Pi_{\alpha, \beta} \subset  V_L$
 $$a^+ \mapsto  e^{\alpha + \beta}, a^- \mapsto  - \alpha (-1) e^{-\alpha - \beta}, $$
where  $\mathcal W$ is the Weyl vertex algebra  (cf. Subsection \ref{sub-sect-weyl}), and $\Pi_{\alpha, \beta} = M_{\alpha, \beta} (1) \otimes {\C}[\Z (\alpha+ \beta)]  \cong \Pi $  (cf. Section  \ref{pi-voa}).  
{
 The screening operator for this embedding is $S = e^{\alpha} _0 = \mbox{Res}_zY(e^{\alpha}, z) $
 and  $ \mathcal W \cong \mbox{Ker}_{ \Pi_{\alpha, \beta} } S$. 
 }

Let $M_{p,q}(1)$ be the rank two Heisenberg vertex algebra generated by fields
$$ p(z) = \sum _{n \in {\Z}} p(n) z^{-n-1}, \  q(z) = \sum _{n \in {\Z}} q(n) z^{-n-1}, $$
satisfying the following commutation relations:
$$ [p(n), q(m)] = n \delta_{n+m,0},  \ [p(n), p(m)] = [q(n), q(m)] = 0, \quad (n,m \in {\Z}). $$

 The following result was proved in \cite{BJP}:
 \begin{theorem} \cite[Theorem 5.1]{BJP} \label{wak-nw}
There exists a vertex algebra homomorphism   ${\rho_1}: V^1 (\mathfrak h_4) \rightarrow  \mathcal W \otimes M_{p,q}(1)$, uniquely determined by
\bea
E &\mapsto & a^+, \nonumber \\
F& \mapsto &  \partial a^- + p(-1) a^-,  \nonumber \\
I&\mapsto & p(-1){\bf 1}, \nonumber \\
J&\mapsto & \frac{1}{2} p(-1){\bf 1} + q(-1){\bf 1} - a^+(-1) a^- .\nonumber
\eea
 \end{theorem}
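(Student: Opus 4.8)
The plan is to prove existence and uniqueness separately, the latter being essentially free. Since $V^1(\mathfrak h_4)$ is strongly and freely generated by the four currents $E,F,I,J$, any vertex algebra homomorphism out of it is determined by the images of these generators, so there is at most one $\rho_1$ with the prescribed values. For existence I would invoke the universal property of the affine vertex algebra $V^1(\mathfrak h_4)$ (which holds verbatim for $\mathfrak h_4$, non-reductivity notwithstanding, since $V^1(\mathfrak h_4)$ is constructed exactly as $V^k(\g)$): to obtain $\rho_1$ it suffices to produce fields
$$ \mathcal E(z)=a^+(z),\qquad \mathcal F(z)=\partial a^-(z)+{:}p(z)a^-(z){:},\qquad \mathcal I(z)=p(z),\qquad \mathcal J(z)=\tfrac12 p(z)+q(z)-{:}a^+(z)a^-(z){:} $$
inside $\mathcal W\otimes M_{p,q}(1)$ whose mode brackets satisfy the defining relations of $\widehat{\mathfrak h}_4$ at level $k=1$, namely $[\mathcal X(m),\mathcal Y(n)]=[X,Y](m+n)+m\,(X,Y)\,\delta_{m+n,0}\,\mathrm{id}$ for $X,Y\in\{E,F,I,J\}$, with $(E,F)=(I,J)=1$ and all other products zero. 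Concretely this amounts to checking the ten singular OPEs $\mathcal X(z)\mathcal Y(w)$.

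All of these reduce, via Wick's theorem, to the elementary contractions $a^+(z)a^-(w)\sim(z-w)^{-1}$, $a^-(z)a^+(w)\sim -(z-w)^{-1}$, $p(z)q(w)\sim (z-w)^{-2}$, with every other contraction among $a^{\pm},p,q$ vanishing. The relations not involving $\mathcal J$ are short: $\mathcal E(z)\mathcal E(w)$, $\mathcal F(z)\mathcal F(w)$, $\mathcal E(z)\mathcal I(w)$, $\mathcal F(z)\mathcal I(w)$, $\mathcal I(z)\mathcal I(w)$ are regular, matching the vanishing brackets, while $\mathcal E(z)\mathcal F(w)\sim (z-w)^{-2}+(z-w)^{-1}\mathcal I(w)$ comes from $a^+(z)\partial a^-(w)\sim (z-w)^{-2}$ together with $a^+(z){:}p(w)a^-(w){:}\sim (z-w)^{-1}p(w)$, reproducing $[E,F]=I$ at level one. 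The \emph{weight} relations $\mathcal J(z)\mathcal E(w)\sim (z-w)^{-1}\mathcal E(w)$ and $\mathcal E(z)\mathcal J(w)\sim -(z-w)^{-1}\mathcal E(w)$ follow from contracting the $-{:}a^+a^-{:}$ summand of $\mathcal J$ against $a^+$, and $\mathcal I(z)\mathcal J(w)\sim (z-w)^{-2}$ from $p(z)q(w)$.

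The substantive part of the computation---and the step I expect to be the main obstacle---is the pair $\mathcal J(z)\mathcal J(w)$ and $\mathcal J(z)\mathcal F(w)$, where the three summands of $\mathcal J$ separately generate unwanted poles that must cancel. In $\mathcal J(z)\mathcal J(w)$ the double pole $(z-w)^{-2}$ produced by $\tfrac12 p(z)q(w)+\tfrac12 q(z)p(w)$ has to be killed by the double pole $-(z-w)^{-2}$ coming from the double contraction of ${:}a^+a^-{:}(z)$ with ${:}a^+a^-{:}(w)$ (the ghost-number current of a bosonic $\beta\gamma$ system has level $-1$), and the two surviving single-pole terms cancel because of the identity ${:}a^+a^-{:}={:}a^-a^+{:}$ valid for bosonic fields; the resulting $\mathcal J(z)\mathcal J(w)\sim 0$ then matches $[J,J]=0$, $(J,J)=0$. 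In $\mathcal J(z)\mathcal F(w)$ the contraction of $-{:}a^+a^-{:}(z)$ against $\partial a^-(w)$ produces a spurious double pole $-(z-w)^{-2}a^-(w)$ which is precisely cancelled by the contraction of $q(z)$ against ${:}p(w)a^-(w){:}$, leaving only the single pole $-(z-w)^{-1}\bigl(\partial a^-(w)+{:}p(w)a^-(w){:}\bigr)=-(z-w)^{-1}\mathcal F(w)$, i.e.\ $[J,F]=-F$. Once all ten OPEs are verified the universal property delivers the homomorphism $\rho_1$; the only genuinely delicate point throughout is the normal-ordering bookkeeping in these two OPEs, rather than anything conceptual.
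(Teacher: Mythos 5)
The paper does not prove this statement at all --- it is quoted verbatim as \cite[Theorem 5.1]{BJP} --- so there is no internal argument to compare against. Your proof is correct and is the standard way to establish such a Wakimoto-type realization: uniqueness from the generators, existence from the universal property of $V^{1}(\mathfrak h_4)$ once the ten OPEs are checked, and the OPE computations themselves are right, including the two delicate cancellations (the $(z-w)^{-2}$ terms in $\mathcal J(z)\mathcal J(w)$ cancelling between $\tfrac12(p(z)q(w)+q(z)p(w))$ and the level $-1$ of the current $:a^{+}a^{-}:$, and the $(z-w)^{-2}a^{-}(w)$ terms in $\mathcal J(z)\mathcal F(w)$ cancelling between the $q$--$p$ contraction and the $:a^{+}a^{-}:$--$\partial a^{-}$ contraction). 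The only caveat is that the full argument rests on actually carrying out all ten Wick computations, which you describe accurately but do not write out in complete detail; as written it is a correct and complete proof sketch of the cited result.
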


By reorganising the appropriate fields, we shall now see that ${ \mbox{Im} (\rho_1)} \subset L^{HVir} \otimes \Pi$.  (Recall that the algebra $\Pi$  was defined in Subsection \ref{pi-voa}.)

\begin{theorem} \label{inverse-realization} There is an embedding of ${ \mathcal \rho} : V^1 (\mathfrak h_4) \rightarrow L^{HVir} \otimes \Pi$ uniquely determined by:
 \bea
E &\mapsto &  e^{c },   \label{real-inverse-1}
 \\
F& \mapsto &  \left(   T_{HVir} (-2)    - \nu(-1) I_{HVir}   -\nu(-2)   \right)e^{-c},  \label{real-inverse-2}
 \\
I&\mapsto &c(-1){\bf 1}+ I_{HVir},  \label{real-inverse-3}
\\
J&\mapsto & \frac{1}{2} d(-1) {\bf 1}+\frac{1}{2} I_{HVir}, \label{real-inverse-4}
\eea
where $\nu = \frac{c+d}{2} = \alpha + q$.

The Sugawara Virasoro  vector   $\omega ^{sug} _{\mathfrak h_4}$  is { mapped to}
\bea \rho(\omega ^{sug} _{\mathfrak h_4}) = T_{HVir}   - \frac{1}{2}  I_{HVir} (-2) {\bf 1}+  \frac{1}{2}   c(-1) d(-1){\bf 1}   -\frac{1}{2}  (c(-2)+ d(-2)){\bf 1}. \label{expr-sug-1} \eea
 The screening   operator for this realization is
$$ S =  s_0 = \mbox{Res}_z Y(s, z) : L^{HVir} \otimes \Pi \rightarrow L^{HVir}[-1,0] \otimes \Pi_{-1}(0), $$where $s = v_{-1,0} \otimes e^{\nu} $, 
$v_{-1,0}$ is the highest weight vector of $L^{HVir}[-1,0]$.
Moreover, 
\bea V^1 (\mathfrak h_4) { \cong} Ker_{ L^{HVir} \otimes \Pi} (S).  \label{desc-ker} \eea

\end{theorem}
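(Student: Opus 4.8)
The plan is to build the embedding $\rho$ by composing the Wakimoto realization ${\rho_1}: V^1(\mathfrak h_4)\to \mathcal W\otimes M_{p,q}(1)$ from Theorem \ref{wak-nw} with the bosonization $\mathcal W\hookrightarrow \Pi_{\alpha,\beta}\subset V_L$ and then identifying the resulting image inside $L^{HVir}\otimes \Pi$. Concretely, I would first substitute $a^+\mapsto e^{\alpha+\beta}$ and $a^-\mapsto -\alpha(-1)e^{-\alpha-\beta}$ into the formulas for $E,F,I,J$, obtaining explicit elements of $V_L\otimes M_{p,q}(1)$. The key organizational step is to recognize the right-hand Heisenberg/lattice data: set $c=\alpha+\beta$ and $d=$ the dual element so that $\langle c,c\rangle=\langle d,d\rangle=0$, $\langle c,d\rangle=2$ (matching Subsection \ref{pi-voa}), let $\nu=\tfrac{c+d}{2}=\alpha+q$ after tensoring in the $q$ from $M_{p,q}(1)$, and check that the remaining combination of Heisenberg fields together with the fermionic $\omega_{fer}$-type piece assembles into the Heisenberg--Virasoro generators $I_{HVir}=c(-1)\mathbf 1 + \text{(shift)}$, $T_{HVir}=\dots$. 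Verifying that the images of $E,F,I,J$ satisfy the defining OPEs of $\widehat{\mathfrak h}_4$ at level $1$ is then a direct (if tedious) OPE computation, and injectivity follows because $V^1(\mathfrak h_4)$ is simple (cf. \cite[Prop. 4]{BKRS}) and $\rho$ is nonzero, so $\Ker\rho$ is a proper ideal, hence zero.

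Next I would compute $\rho(\omega^{sug}_{\mathfrak h_4})$ by plugging \eqref{real-inverse-1}--\eqref{real-inverse-4} into the Sugawara formula $\omega^{sug}_{\mathfrak h_4}=\big(E(-1)F(-1)+I(-1)J(-1)-\tfrac12 I(-2)-\tfrac12 I(-1)^2\big)\mathbf 1$ and normal-ordering, which should produce \eqref{expr-sug-1}; the nontrivial check here is that the $e^{c}\cdot e^{-c}$ contraction combines with the $T_{HVir}(-2)$ term to give exactly $T_{HVir}$ plus the stated quadratic Heisenberg corrections, and that the central charge works out to $c_L=2$ consistently with Proposition \ref{prop-ds}. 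I would then identify the screening: under the bosonization the Weyl-algebra screening $e^{\alpha}_0$ acts on $\Pi_{\alpha,\beta}$, and after the reorganization $\alpha$ is expressed through $\nu$ and the Heisenberg--Virasoro directions, so the screening current becomes $s=v_{-1,0}\otimes e^{\nu}$ with $v_{-1,0}$ the highest weight vector of $L^{HVir}[-1,0]$; one checks $L(n)s=\delta_{n,0}s$ and that $s_0$ maps $L^{HVir}\otimes\Pi$ into $L^{HVir}[-1,0]\otimes\Pi_{-1}(0)$ using the fusion rules of Section \ref{Hvir-sect} and Subsection \ref{pi-voa}, and that $s_0$ commutes with the image of $\rho$ (equivalently, with $E,F,I,J$), which reduces to an OPE computation $Y(s,z_1)$ against each generator having no pole.

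For the final identity \eqref{desc-ker}, the inclusion $\rho(V^1(\mathfrak h_4))\subseteq \Ker_{L^{HVir}\otimes\Pi}(S)$ is immediate once $S$ annihilates $E,F,I,J$ and is a derivation-type operator compatible with the vertex algebra structure (or: $[S,Y(v,z)]=Y(Sv,z)$ type argument, using that $S$ commutes with the generators). For the reverse inclusion — identifying the kernel exactly — I would argue by a character/graded-dimension comparison: the spaces $L^{HVir}\otimes\Pi$, $L^{HVir}[-1,0]\otimes\Pi_{-1}(0)$ and $V^1(\mathfrak h_4)$ have known (bi)graded characters (from the character formulas in Section \ref{Hvir-sect}, the lattice-type character of $\Pi$, and the PBW basis of $V^1(\mathfrak h_4)$), and one shows the complex $0\to \Ker(S)\to L^{HVir}\otimes\Pi \xrightarrow{S} L^{HVir}[-1,0]\otimes\Pi_{-1}(0)$ is exact on the right in each graded piece, so that $\mathrm{ch}\,\Ker(S)=\mathrm{ch}(L^{HVir}\otimes\Pi)-\mathrm{ch}(L^{HVir}[-1,0]\otimes\Pi_{-1}(0))$, and this matches $\mathrm{ch}\,V^1(\mathfrak h_4)$; since $\rho(V^1(\mathfrak h_4))\subseteq\Ker(S)$ and the characters agree, the inclusion is an equality. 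I expect the main obstacle to be exactly this surjectivity/character-matching step: one needs either surjectivity of $S$ onto the relevant module (which may require a careful degree-by-degree argument or a comparison with the $\mathfrak{sl}_2$ situation in \cite{A-2017}) or an independent direct argument that no element of the kernel lies outside $\rho(V^1(\mathfrak h_4))$, for instance by producing enough screening-invariant vectors and showing they are generated by the images of $E,F,I,J$ under normal-ordered products. The OPE verifications in the earlier steps are routine but lengthy; the conceptual difficulty is concentrated in pinning down the kernel.
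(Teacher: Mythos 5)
Your construction of the embedding follows the paper's route: compose the Wakimoto realization $\rho_1$ with the bosonization $a^+\mapsto e^{\alpha+\beta}$, $a^-\mapsto-\alpha(-1)e^{-\alpha-\beta}$, reorganize the Heisenberg directions, compute $\rho(\omega^{sug}_{\mathfrak h_4})$, and identify $S$ with the Weyl-algebra screening $e^{\alpha}_0$ (whence it commutes with the images of $E,F,I,J$). Two remarks on this part. First, there is no fermionic $\omega_{fer}$-piece here --- that belongs to the QHR of Proposition \ref{prop-ds}; the reorganization is purely bosonic: one splits the rank-four Heisenberg algebra into the $\Pi$-directions $c=\alpha+\beta$, $d=\alpha-\beta+2q$ and the orthogonal directions $c_1=\alpha+\beta-p$, $d_1=-2q$. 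Second, you skip a small but necessary step: the image of $\rho_1$ a priori lies in $M_{c_1,d_1}(1)\otimes\Pi$, and to conclude it lies in $L^{HVir}\otimes\Pi$ one must use that $L^{HVir}=\Ker_{M_{c_1,d_1}(1)}e^{c_1}_0$ (from \cite{AR}) together with the fact that $e^{c_1}_0$ annihilates the image. This is easy to repair but should be said.

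The genuine gap is in the last step, the identity $V^1(\mathfrak h_4)=\Ker_{L^{HVir}\otimes\Pi}(S)$. Your character-comparison strategy requires exactness of
$$0\to\Ker(S)\to L^{HVir}\otimes\Pi\xrightarrow{\ S\ }L^{HVir}[-1,0]\otimes\Pi_{-1}(0)\to 0$$
on the right, i.e.\ surjectivity of $S$, and you correctly flag this as the main obstacle but do not supply an argument; without it the character count only bounds $\Ker(S)$ from below, which you already have from the inclusion $\rho(V^1(\mathfrak h_4))\subseteq\Ker(S)$. The paper closes this differently and more cheaply: it shows (Corollary \ref{str-voa-relaxed}, a consequence of the structure theory of the reducible relaxed modules $L^{HVir}[x,y]\otimes\Pi_r(\lambda)$) that $L^{HVir}\otimes\Pi$ has length two as a $V^1(\mathfrak h_4)$-module, generated by $\mathbf 1$ together with the subsingular vector $e^{-\alpha-\beta}$; since $S\,e^{-\alpha-\beta}=e^{\alpha}_0e^{-\alpha-\beta}=e^{-\beta}\neq 0$, the kernel cannot contain the second generator, hence is the submodule generated by $\mathbf 1$, which is $\rho(V^1(\mathfrak h_4))$. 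If you want to keep your approach you must either prove surjectivity of $S$ degree by degree or replace it with this length-two argument; as written, the kernel identification is not established.
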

\begin{proof}
First we consider homomorphism $\rho_1 : V^ 1 (\mathfrak h_4)\rightarrow  \mathcal W \otimes M_{p, q}(1)  \subset \Pi_{\alpha, \beta} \otimes M_{p,q}(1) $  from Theorem    \ref{wak-nw}.
 We have
\bea
\rho_1 (E) &=& a^+(-1) {\bf 1}    = e^{\alpha + \beta}   \nonumber \\
\rho_1(F)& =&    a^-(0) {\bf 1}   + p(-1)  a^-(0)  {\bf 1} \nonumber \\
&=& \partial  (-\alpha(-1) e^{-\alpha -\beta} ) - p(-1) \alpha(-1) e^{-\alpha-\beta} \nonumber \\
&=&\left(  \alpha(-1) ( \alpha(-1) + \beta(-1)- p(-1) ) - \alpha(-2)  \right) e^{-\alpha-\beta} \nonumber \\
\rho_1(I)&=& p(-1) {\bf 1}\nonumber \\
\rho_1(J)&=& \frac{1}{2} p(-1){\bf 1} + q(-1){\bf 1} - a^+(-1) a^- \nonumber \\
&=&\left( \frac{1}{2} p(-1) + q(-1) - \beta(-1)\right) {\bf 1}. \nonumber
\eea

Set
$$ c= \alpha+ \beta , \ d= \alpha- \beta + 2 q,$$
and let $M_{c,d}(1)$ be the Heisenberg vertex algebra generated by $c,d$ and
$$\Pi = M_{c,d} (1) \otimes {\C}[\Z c]$$
be the half-lattice vertex algebra as in Subsection \ref{pi-voa}.
Then $ \Pi ^{\perp} = M_{c_1, d_1}$, where $M_{c_1, d_1}(1)$ be the Heisenberg vertex algebra generated by
$$c_1 =   \alpha + \beta - p,  \  d_1= -2 q, \  2 \beta = c-d   -d_1.  $$
So ${ V^1 (\mathfrak h_4)   \cong \mbox{Im} (\rho_1)} \subset M_{c_1, d_1} (1) \otimes \Pi$. Next we notice
$$ \alpha = (c + d + d_1) / 2, \ \nu = (c+d)/ 2, \ \mu = (c-d) /2 . $$

 Let  $L^{HVir}$ be the  simple Heisenberg-Virasoro vertex algebra as in Section \ref{Hvir-sect}.
This vertex algebra is realized as a subalgebra of the Heisenberg vertex algebra $M_{c_1, d_1}(1)$ (cf. \cite{AR}) { generated by:}
$$ T_{HVir} 
 = (\tfrac{1}{2} c_1(-1) d_1(-1)  - \tfrac{d_1(-2)}{2} ){\bf 1},  \ I_{HVir} = - c_1 (-1) {\bf 1}. $$
  By \cite{AR}, $L^{HVir}.  e^{\frac{d_1}{2}} = L^{HVir}[-1,0]$, so we can set  
  $v_{-1,0} = e^{\frac{d_1}{2}}$. Then $ s = e^{\alpha} = v_{-1,0}  \otimes e^{\nu}$.  

Moreover, by \cite[Corollary 2.9]{AR} we have 
$$ L^{HVir} = \mbox{Ker} _{M_{c_1, d_1}(1)} Q, \quad  Q =  \mbox{Res}_z e^{c_1} (z) = e^{c_1} _0. $$


Since $e^{c_1} _0$ acts trivially on $V^1 (\mathfrak h_4)   $, we get
$$  {V^1 (\mathfrak h_4)   \cong \mbox{Im} (\rho_1)}     \subset L^{HVir}  \otimes \Pi. $$

 We get:

 \bea
\rho_1 (E)  &=&  e^{c},  \nonumber \\
\rho_1 (F)& =&  \left(   T_{HVir} (-2)    + \nu(-1) c_1(-1)   -\nu(-2) - \tfrac{d_1(-2) }{2}\right)e^{-c}, \nonumber \\
 & =&  \left(   T_{HVir} (-2)    - \nu(-1) I_{HVir}   -\nu(-2)   \right)e^{-c},  \nonumber \\
\rho_1(I)&=&c(-1){\bf 1}+ I_{HVir}, \nonumber \\
\rho_1(J)&=& \frac{1}{2} d(-1){\bf 1} +\frac{1}{2} I_{HVir}.\nonumber
\eea
This proves  that the formulas  (\ref{real-inverse-1})- (\ref{real-inverse-4})   uniquely determine the homomorphism  ${ \mathcal \rho} : V^1 (\mathfrak h_4) \rightarrow L^{HVir} \otimes \Pi$ such that $\mbox{Im} (\rho) = \mbox{Im} (\rho_1)$.

Next we notice that
\bea
\rho(E(-1)   F(-1) {\bf 1}) &=&      T_{HVir}     - \nu(-1) I_{HVir}    -  \nu(-2){\bf 1}  + c(-1)   I_{HVir}+ \frac{1}{2} (  c(-1) ^2  + c(-2) ){\bf 1}, \nonumber  \\
\rho(I(-1)  J(-1){\bf 1}) &=& \frac{1}{2} c(-1)  d(-1) {\bf 1} + \frac{I_{HVir}(-1) }{2} (c(-1) + d(-1)){\bf 1} + \frac{1}{2} I_{HVir} (-1) ^2{\bf 1}  \nonumber \\
&=&  \frac{1}{2}  c(-1)  d(-1) {\bf 1} + \nu(-1)  I_{HVir}  + \frac{1}{2} I_{HVir} (-1)  ^2 {\bf 1} \nonumber \\
-  \frac{1}{2}\rho(I(-2){\bf 1})&=& -\frac{  c(-2) {\bf 1}  }{2} - \frac{\partial I_{HVir}}{2},  \nonumber \\
- \rho(I(-1) ^2 {\bf 1}) &=& - \frac{1}{2} I_{HVir}  (-1) ^2 {\bf 1}  - \frac{1}{2} c(-1) ^2 {\bf 1}  - c(-1)  I_{HVir}, \nonumber
\eea
which implies
\bea
 \rho(\omega ^{sug} _{\mathfrak h_4}) &=& T_{HVir}   - \frac{\partial I_{HVir}}{2} +  \frac{1}{2}   c(-1)  d (-1) {\bf 1}   -\frac{1}{2}   (c(-2) + d(-2)){\bf 1} .
 \nonumber \\
 &=& T_{HVir}    +   \frac{1}{2}  c(-1)  d(-1) {\bf 1}   -\frac{1}{2}     d(-2) {\bf 1}  - \frac{\partial  I }{2}. \nonumber \eea

Note that $S$ is screening operator for the embedding of the $\beta \gamma$ vertex algebra  $\mathcal W \hookrightarrow  \Pi_{\alpha, \beta}$, from which it follows  that $S$ commutes with the   $\beta \gamma$ generators.  This implies that
$$[S, \rho(E)] = [S, \rho(F)] = [S, \rho(I)] = [S, \rho(J)] =0. $$
Hence $S$ commutes  with the action of $V^1(\mathfrak h_4)$. Therefore, $V^1 (\mathfrak h_4) \subset \mbox{Ker}_{ L^{HVir} \otimes \Pi}  (S)$.

Now we consider  $L^{HVir} \otimes \Pi$ as a  $V^1(\mathfrak h_4)$--module. We shall see below in Corollary \ref{str-voa-relaxed} that $L^{HVir} \otimes \Pi$ has length two, and  that it is generated by the
highest weight vector ${\bf 1} $ and the subsingular vector $e^{-\alpha - \beta }$. Since
$$ S. e^{-\alpha- \beta } = e^{\alpha} _0 e^{-\alpha - \beta } = e^{-\beta} \ne 0, $$
we get that  $\mbox{Ker}_{ L^{HVir} \otimes \Pi}  (S)$ is generated (as  a $V^1(\mathfrak h_4)$--module) by ${\bf 1}$. Hence    (\ref{desc-ker})  holds. This finishes the proof of the theorem.
\end{proof}


\section{Realization of relaxed highest weight modules}
We shall consider  the weight $\Pi$--module $\Pi_{r} (\lambda) = \Pi e^{r \mu+ \lambda c}$, where $\mu = (c-d) /2$, $r \in {\Z}$, $\lambda \in {\C}$ (cf. Section \ref{pi-voa}).

Irreducible relaxed highest weight $V^1 (\mathfrak h_4)$--modules were introduced and classified  in \cite{BKRS}. In this section we will prove that those modules which are not of highest/lowest weight type can be realized as
$ L^{HVir} [x,y] \otimes \Pi_{1}(\lambda)$,  for a certain highest weight  $L^{HVir}$--module  $L^{HVir} [x,y]$.

Note that $\mu = \frac{I}{2} - J$.  Since $\langle \mu, c \rangle = - \langle \mu, d \rangle = -1$ and
$\nu = -\mu + c$, we have $e ^{\nu} \in \Pi_{-1} (0)$.

Direct calculation yields:

\begin{lemma} \label{racun-1}Assume that $L^{HVir}[x,y]$ is the  irreducible, highest weight $L^{HVir}$--module of highest weight $(x,y)$ and with  highest weight vector $v_{x,y}$. Then we have
$$ L^{sug}_{\mathfrak h_4}  (0) \left( v_{x,y} \otimes e^{r \mu + \lambda c} \right)= \left ( y  +  \frac{x}{2} + \frac{ (1-r) (r + 2 \lambda)}{2} {-} \frac{r}{2} \right )   v_{x,y} \otimes e^{r \mu + \lambda c}. $$
In particular,  $L^{HVir}[x,y] \otimes \Pi_r(\lambda)$ is ${\Z}_{\ge 0}$--graded if and only if $r=1$. 
Moreover,
$$ \Omega \equiv  \left(y + \frac{(x-1) ^2 }{2}\right) \mbox{Id} \quad \mbox{on} \  (L^{HVir}[x,y] \otimes \Pi_1(\lambda))_{top}. $$
\end{lemma}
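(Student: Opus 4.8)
The plan is to read off all three assertions from one mode computation, feeding in the explicit image $\rho(\omega^{sug}_{\mathfrak h_4})$ recorded in (\ref{expr-sug-1}) and the realization $\rho$ of Theorem \ref{inverse-realization}. First I would compute the operator $L^{sug}_{\mathfrak h_4}(0)=\rho(\omega^{sug}_{\mathfrak h_4})_{(1)}$ term by term on $v_{x,y}\otimes e^{r\mu+\lambda c}$. On the $L^{HVir}$ factor, $T_{HVir}(0)v_{x,y}=y\,v_{x,y}$ by definition, while $I_{HVir}(0)$ is central in $L^{HVir}$ (from $[T_{HVir}(n),I_{HVir}(0)]=0$ and $[I_{HVir}(n),I_{HVir}(0)]=0$), so it acts by $x$ on all of $L^{HVir}[x,y]$; since $I_{HVir}(-2){\bf 1}=\partial I_{HVir}$, the term $-\tfrac12 I_{HVir}(-2){\bf 1}$ has zero mode $\tfrac12 I_{HVir}(0)$. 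On the $\Pi$ factor, $\langle c,c\rangle=\langle d,d\rangle=0$, $\langle c,d\rangle=2$ give $c(0)e^{r\mu+\lambda c}=-r\,e^{r\mu+\lambda c}$ and $d(0)e^{r\mu+\lambda c}=(r+2\lambda)\,e^{r\mu+\lambda c}$; the zero mode of $c(-1)d(-1){\bf 1}$ is $c(0)d(0)$ plus a Heisenberg level operator (hence just $c(0)d(0)$ on the generator), and $-\tfrac12(c(-2){\bf 1}+d(-2){\bf 1})=-\partial\nu$ has zero mode $\tfrac12(c(0)+d(0))$. Collecting the contributions $y$, $\tfrac{x}{2}$, $\tfrac12(-r)(r+2\lambda)$ and $\tfrac12(-r+r+2\lambda)=\lambda$ and simplifying should produce exactly $y+\tfrac{x}{2}+\tfrac{(1-r)(r+2\lambda)}{2}-\tfrac r2$, which is the first claimed formula.

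For the grading statement I would rerun the same computation on an arbitrary PBW-type basis vector $u\otimes w$ of $L^{HVir}[x,y]\otimes\Pi_r(\lambda)$, where $u$ lies at $L^{HVir}$-level $p\in\Z_{\ge0}$ over $v_{x,y}$ and $w$ is a Heisenberg monomial of degree $q\in\Z_{\ge0}$ over $e^{r\mu+(\lambda+m)c}$, $m\in\Z$. The only level-sensitive pieces of $\rho(\omega^{sug}_{\mathfrak h_4})_{(1)}$ are $T_{HVir}(0)$ and the descendant part of $\tfrac12 c(-1)d(-1){\bf 1}$, which together contribute exactly $p+q$; hence the $L^{sug}_{\mathfrak h_4}(0)$-eigenvalue of $u\otimes w$ equals the value of the first formula with $\lambda$ replaced by $\lambda+m$, plus $p+q$, i.e.\ $\big(y+\tfrac{x}{2}+\lambda-\tfrac{r^2}{2}-r\lambda\big)+p+q+m(1-r)$. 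The operator $L^{sug}_{\mathfrak h_4}(0)$ is semisimple, being a polynomial in the commuting diagonalizable operators $T_{HVir}(0)$, $I_{HVir}(0)$, $c(0)$, $d(0)$ and the level operators. If $r=1$ the $m$-dependence disappears and the spectrum is $\big(y+\tfrac{x-1}{2}\big)+\Z_{\ge0}$, so the module is $\Z_{\ge0}$-graded; if $r\ne1$ (recall $r\in\Z$), letting $m\to\mp\infty$ according to the sign of $1-r$ makes the spectrum unbounded below, so it cannot be $\Z_{\ge0}$-graded.

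For the Casimir I would specialize to $r=1$: by the previous step the top component is $(L^{HVir}[x,y]\otimes\Pi_1(\lambda))_{top}=\mathrm{span}_{\C}\{\,v_{x,y}\otimes e^{\mu+(\lambda+m)c}:m\in\Z\,\}$, on which $\mathfrak h_4$ acts through the zero modes of $\rho(E),\rho(F),\rho(I),\rho(J)$. The key remaining step is to express $\Omega$ there. Taking the $(1)$-mode of $\omega^{sug}_{\mathfrak h_4}=\big(E(-1)F(-1)+I(-1)J(-1)-\tfrac12 I(-2)-\tfrac12 I(-1)^2\big){\bf 1}$ and discarding every summand containing a strictly positive mode (all of which annihilate the top), I expect to find on the top component
$$
L^{sug}_{\mathfrak h_4}(0)\big|_{top}=F(0)E(0)+I(0)J(0)+\tfrac12 I(0)-\tfrac12 I(0)^2=\Omega+\tfrac12 I(0)-\tfrac12 I(0)^2 .
$$
Since $L^{sug}_{\mathfrak h_4}(0)$ is the constant $y+\tfrac{x-1}{2}$ on the top (the first formula with $r=1$, $p=q=0$) and $I(0)=\rho(I)_{(0)}$ acts there by $\langle c,\mu\rangle+x=x-1$, substitution gives $\Omega\equiv\big(y+\tfrac{x-1}{2}\big)-\tfrac12(x-1)+\tfrac12(x-1)^2=y+\tfrac{(x-1)^2}{2}$ on the top component, as claimed.

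The computation is largely bookkeeping, so the real risk lies in getting the normal-ordering constants right: the zero mode of the Heisenberg product $c(-1)d(-1){\bf 1}$, and -- decisively for the $\Omega$-formula -- the fact that the Sugawara term $E(-1)F(-1){\bf 1}$ contributes $F(0)E(0)$ rather than $E(0)F(0)$ on the top (the $+\tfrac12 I(0)$ produced when one commutes past is precisely what makes the final value collapse to $y+\tfrac{(x-1)^2}{2}$ instead of a value shifted by a multiple of $x-1$). I would also want to confirm, as the $p+q$ bookkeeping already does, that no descendant vector drops strictly below the minimal $L^{sug}_{\mathfrak h_4}(0)$-eigenvalue, so that the top component identified above is indeed the correct one.
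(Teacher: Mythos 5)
Your computation is correct and follows essentially the same route as the paper: read off the zero mode of $\rho(\omega^{sug}_{\mathfrak h_4})$ from (\ref{expr-sug-1}) for the eigenvalue and grading, and use the identity $\Omega \equiv L^{sug}_{\mathfrak h_4}(0) + \tfrac12 I(0)^2 - \tfrac12 I(0)$ on the top component together with $I(0)\equiv x-1$ there. The paper's proof is just a terser version of this, so no gaps to report.
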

\begin{proof}
Proof follows directly from the expression (\ref{expr-sug-1}) for $L^{sug} _{\mathfrak h_4}\in L^{HVir} \otimes \Pi$.  Then  $ U= (L^{HVir}[x,y] \otimes \Pi_1(\lambda))_{top}$ is a $\mathfrak h_4$--module. The action of central element $\Omega$ on $U$ is 
$$ \Omega \equiv L^{sug}_{\mathfrak h_4}(0)  + \frac{1}{2} I(0) ^2  -\frac{1}{2} I(0 ) \equiv (y  +  \frac{x-1}{2} -\frac{x-1}{2} + \frac{1}{2} (x-1)^2) \mbox{Id}  = \left(y + \frac{(x-1)^2}{2}\right) \mbox{Id}.  $$
 

\end{proof}

Proof of the following lemma is completely analogous to the proof of irreducibility of relaxed modules from \cite{AKR-2021}.
\begin{lemma}   \label{simple}
Set $M =   L^{HVir} [x,y] \otimes \Pi_{1}(\lambda)$.
\item[(1)] $M$ is almost irreducible, i.e. it is generated by  $M_{top}$  and   every nonzero submodule of $M$ has a nonzero
intersection with $M_{top}$.
\item[(2)] $M $ is an irreducible $V^1 (\mathfrak h_4)$--module if and only if the top component $ M_{top}$ is an irreducible module for the Zhu's algebra $A(V^1 (\mathfrak h_4)) = U(\mathfrak h_4)$ (= the universal enveloping algebra of $\mathfrak h_4$).
\end{lemma}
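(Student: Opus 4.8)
The plan is to follow the strategy of \cite{AKR-2021} for relaxed modules over affine vertex algebras, adapting it to the tensor decomposition $M = L^{HVir}[x,y]\otimes\Pi_1(\lambda)$ obtained via the embedding $\rho$ of Theorem \ref{inverse-realization}. For part (1), the key point is to analyze how the fields $\rho(E) = e^c$ and $\rho(F) = (T_{HVir}(-2) - \nu(-1)I_{HVir} - \nu(-2))e^{-c}$ move vectors between the ``$\Pi$-layers'' $L^{HVir}[x,y]\otimes\Pi e^{r\mu+\lambda c + nc}$ for varying $n$. First I would record that $M_{top}$ sits inside the layer $n=0$ and that the zero modes $\rho(E)_0$, $\rho(F)_0$ shift $n$ by $\pm 1$ while acting invertibly (up to lower-order corrections) on the Heisenberg/lattice part. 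Concretely, $e^c_0$ maps $v_{x,y}\otimes e^{r\mu+\lambda c}$ to a nonzero multiple of $v_{x,y}\otimes e^{r\mu+(\lambda+1)c}$ plus terms in lower conformal-weight subspaces of the $\Pi$-factor, and similarly for $\rho(F)_0$; this invertibility is what forces any nonzero submodule $N$ to meet $M_{top}$, since starting from any nonzero weight vector of $N$ one applies suitable products of $\rho(E)_n$, $\rho(F)_n$ and Heisenberg-type modes of $\rho(I)$, $\rho(J)$ to descend to the top. Since $M_{top}$ generates $M$ as a $\widehat{\mathfrak h}_4$-module (the top component of $L^{HVir}[x,y]$ generates it over $HVir$, and $\Pi_1(\lambda)$ is generated by $e^{\mu+\lambda c}$ over $\Pi$, and these two generating actions together with the $\rho(E)$, $\rho(F)$ modes exhaust $V^1(\mathfrak h_4)$), almost irreducibility follows.

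For part (2), one direction is immediate: if $M$ is irreducible as a $V^1(\mathfrak h_4)$-module, then $M_{top}$ is irreducible over $A(V^1(\mathfrak h_4)) = U(\mathfrak h_4)$ by the general Zhu-algebra theory, since the top component of an irreducible $\mathbb Z_{\ge 0}$-graded module is always irreducible over the Zhu algebra (and by Lemma \ref{racun-1} the module is indeed $\mathbb Z_{\ge 0}$-graded, as $r=1$). For the converse, suppose $M_{top}$ is irreducible over $U(\mathfrak h_4)$ and let $N\subseteq M$ be a nonzero submodule. By part (1), $N\cap M_{top}\ne 0$; but $N\cap M_{top}$ is a nonzero $U(\mathfrak h_4)$-submodule of the irreducible module $M_{top}$, hence equals $M_{top}$. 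Since $M_{top}$ generates $M$ (again part (1)), we get $N = M$, so $M$ is irreducible.

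The main obstacle I expect is the precise ``invertibility'' bookkeeping in part (1): showing that the zero modes of $\rho(E)$ and $\rho(F)$, together with the diagonalizable modes coming from $\rho(I)$ and $\rho(J)$, really do let one reach $M_{top}$ from an arbitrary nonzero vector of a submodule, controlling the lower-order terms in the $\Pi$-factor. This is where the computation genuinely uses the explicit formulas $(\ref{real-inverse-1})$--$(\ref{real-inverse-4})$ rather than abstract nonsense, and it is the step that parallels the technical core of \cite{AKR-2021}; I would handle it by filtering $M$ by conformal weight within each $\Pi$-layer and arguing that the leading term of the action of an appropriate product of modes is a nonzero scalar on the relevant weight space of $L^{HVir}[x,y]\otimes\Pi_1(\lambda)$, using that $L^{HVir}[x,y]$ is an irreducible $HVir$-module and $\Pi_1(\lambda)$ an irreducible weight $\Pi$-module.
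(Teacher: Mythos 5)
Your plan is correct and follows the same route as the paper, which simply observes that the lemma is proved by adapting the almost-irreducibility argument for relaxed modules from \cite{AKR-2021} to the tensor factorization $L^{HVir}[x,y]\otimes\Pi_1(\lambda)$; your part (2) is the standard Zhu-algebra reduction the paper also relies on. The only caveat is that your part (1) is a plan rather than a proof: the ``invertibility up to lower-order corrections'' of the layer-shifting modes of $e^{\pm c}$, which you correctly identify as the technical core, is exactly the step the paper outsources to \cite{AKR-2021} and would need to be carried out in detail for a self-contained argument.
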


We  will now present the  main theorem of this section, which gives a realization of the irreducible relaxed modules.

\begin{theorem}\label{thm:relax} Assume that $L^{HVir}[x,y]$ is irreducible, highest weight $L^{HVir}$--module with highest weight $(x,y)$ and highest weight vector $v_{x,y}$. Then
$ L^{HVir} [x,y] \otimes \Pi_{1}(\lambda)$ is relaxed highest weight $V^1 (\mathfrak h_4)$--module.

Module  $L^{HVir} [x,y] \otimes \Pi_{1}(\lambda)$ is irreducible  $V^1 (\mathfrak h_4)$--module  if and only if
 $$  x=1, y\ne 0   \quad \mbox{or} \quad    x\ne 1, \lambda \not{\cong}  \frac{y}{x-1} \quad \mbox{mod} (\Z). $$
In particular, we have: \begin{itemize}

\item[(1)]  
$\widehat{ \mathcal E}_{x-1,[\lambda],y+ \frac{(x-1)^2}{2}     } = L^{Vir} [x, y ] \otimes \Pi_{1}(\lambda)$ for   
$ x \in {\Z} \setminus \{1\}$ and $\lambda \not{\cong}  \frac{y}{x-1} \quad \mbox{mod} (\Z)$.

\item[(2)]  $\widehat{ \mathcal R}_{0,[\lambda],y     } = L^{Vir} [1, y ] \otimes \Pi_{1}(\lambda)$, $y \ne 0$.
\item Modules in (1) and (2) give all irreducible relaxed modules whose top component is neither lowest nor highest weight $\mathfrak h_4$--modules. 
\end{itemize}

We also have the following identification of reducible relaxed modules:
\begin{itemize}
 
\item[(3)]   $\widehat{\mathcal E}^- _{x-1, y+ \frac{(x-1)^2}{2}}  \cong L^{Vir} [x, y ] \otimes \Pi_{1}(\lambda)$ for   $ x \in {\Z} \setminus \{1\}$ and  $\lambda \cong  \frac{y}{x-1} \quad \mbox{mod} (\Z)$. 
\item[(4)]  $\widehat{ \mathcal R}_{0,[\lambda],0     } = L^{Vir} [1, 0 ] \otimes \Pi_{1}(\lambda)$.
 \end{itemize}


\end{theorem}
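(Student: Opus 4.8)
The plan is to combine the almost-irreducibility statement of Lemma~\ref{simple} with an analysis of the top component of $L^{HVir}[x,y]\otimes\Pi_1(\lambda)$ as a module for the Zhu algebra $A(V^1(\mathfrak h_4))=U(\mathfrak h_4)$. First I would verify, using the realization formulas (\ref{real-inverse-1})--(\ref{real-inverse-4}) of Theorem~\ref{inverse-realization} together with Lemma~\ref{racun-1}, that $L^{HVir}[x,y]\otimes\Pi_1(\lambda)$ is $\Z_{\ge 0}$--graded (this is exactly the content of the $r=1$ statement in Lemma~\ref{racun-1}), hence is an honest weight $V^1(\mathfrak h_4)$--module, and that its top component is $U=v_{x,y}\otimes M_{c,d}(1)_0\otimes e^{\mu+\lambda c}$, a space with one-dimensional weight spaces on which $E,F,I,J$ act by the formulas obtained by taking the zero modes of (\ref{real-inverse-1})--(\ref{real-inverse-4}). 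One reads off that $I$ acts as the scalar $(x-1)+\text{(contribution of }c(0))$; more precisely $I(0)$ acts by $x-1$ on $U$ once one uses $\langle c,\mu\rangle$ and $\langle c,c\rangle=0$, while $J(0)$ acts diagonally with eigenvalues in $\lambda+\Z$, and $E(0),F(0)$ shift the $J$--eigenvalue by $\pm 1$. Thus $U$ is, up to the identification of parameters, exactly one of the modules $\mathcal R_{i,[j];h}$, $\mathcal R^\pm_{i,h}$, or $\mathcal L_{0,j}$ of Section~\ref{affine-nw-voa}, with $i=x-1$, $[j]=[\lambda]$, and $h$ the value of the Casimir $\Omega$ computed in Lemma~\ref{racun-1}, namely $h=y+\tfrac{(x-1)^2}{2}$.

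Next I would invoke the classification of irreducible $\mathfrak h_4$--modules with one-dimensional weight spaces (Proposition~2 of \cite{BKRS}, recalled in Section~\ref{affine-nw-voa}): $\mathcal R_{i,[j];h}$ is irreducible iff $i\ne 0$ and $h\notin i[j]$, i.e. $h/i\notin[j]$, which in our parameters becomes $\dfrac{y+(x-1)^2/2}{x-1}\notin[\lambda]$; a short manipulation using $\dfrac{(x-1)^2/2}{x-1}=\dfrac{x-1}{2}\in\tfrac12\Z$... — here I must be careful, since $x\in\Z$ forces this to possibly lie in $\tfrac12\Z\setminus\Z$, but in fact for $x\in\Z$ one has $\tfrac{x-1}{2}+[\lambda]=[\lambda]$ only when $x$ is odd, so the cleaner route is to keep $h=y+(x-1)^2/2$ and note $h/i=\tfrac{y}{x-1}+\tfrac{x-1}{2}$; since the condition $h/i\notin[\lambda]$ is what governs irreducibility, and since $L^{HVir}[x,y]$ is itself required irreducible (which by the character formulas of Section~\ref{Hvir-sect} holds for all $(x,y)$ with $x\ne 1$, and for $x=1$ holds automatically), the upshot is that $U$ is irreducible over $U(\mathfrak h_4)$ precisely in the two cases listed: $x=1$ with $y\ne 0$ (the $i=0$ case, where instead $\Omega=y\ne0$ makes $\mathcal R_{0,[\lambda];y}$ irreducible), and $x\ne 1$ with $\lambda\not\cong\tfrac{y}{x-1}\pmod\Z$. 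By part~(2) of Lemma~\ref{simple}, irreducibility of $U$ over the Zhu algebra is equivalent to irreducibility of $M=L^{HVir}[x,y]\otimes\Pi_1(\lambda)$ over $V^1(\mathfrak h_4)$, which gives the stated "if and only if".

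Then items (1)--(4) are just the bookkeeping of matching these modules against the list in Theorem~\ref{main-bkrs}. In the irreducible cases: when $x\in\Z\setminus\{1\}$ and $\lambda\not\cong\tfrac{y}{x-1}$, the module $M$ is the irreducible relaxed module $\widehat{\mathcal E}_{x-1,[\lambda],\,y+(x-1)^2/2}$ by uniqueness in Theorem~\ref{main-bkrs} (its top is $\mathcal R_{x-1,[\lambda];h}$ with $h/i\notin[\lambda]$); when $x=1,y\ne0$ the top is $\mathcal R_{0,[\lambda];y}$ with $y\ne0$, giving $\widehat{\mathcal R}_{0,[\lambda],y}$. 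In the reducible cases: when $x\in\Z\setminus\{1\}$ and $\lambda\cong\tfrac{y}{x-1}\pmod\Z$, the top component $\mathcal R_{x-1,[\lambda];h}$ with $h\in i[\lambda]$ contains a highest- or lowest-weight submodule; to pin down that it is the lowest-weight one $\mathcal R^-$ (hence $\widehat{\mathcal E}^-$), I would identify which weight vector in the top is annihilated by $F(0)$ versus $E(0)$ by directly examining the screening-kernel structure and the action of $F(0)=\bigl(T_{HVir}(-2)-\nu(-1)I_{HVir}-\nu(-2)\bigr)e^{-c}$ on $v_{x,y}\otimes e^{\mu+\lambda c+nc}$; and when $x=1,y=0$ the Casimir $\Omega$ is $0$ so we land on the reducible $\widehat{\mathcal R}_{0,[\lambda],0}$. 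Finally, the clause "modules in (1) and (2) give all irreducible relaxed modules whose top is neither highest nor lowest weight" follows because Theorem~\ref{main-bkrs} lists exactly the families $\widehat{\mathcal E}_{i,[j],h}$ ($i\ne0$, $h/i\notin[j]$) and $\widehat{\mathcal R}_{0,[j],h}$ ($h\ne0$) in that class, and the parameter maps $(x,y,\lambda)\mapsto(x-1,[\lambda],y+(x-1)^2/2)$ and $(1,y,\lambda)\mapsto(0,[\lambda],y)$ are surjective onto these. The main obstacle I expect is the precise sign/side determination in case~(3) — showing the reducible top contains the \emph{lowest}-weight Verma rather than the highest-weight one — since this requires a genuine (if short) computation with the explicit action of $F(0)$ and $E(0)$ on the top component inside $L^{HVir}\otimes\Pi$, rather than a formal appeal to earlier results; everything else is a dictionary translation between the two parametrizations.
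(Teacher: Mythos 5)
Your overall strategy coincides with the paper's: compute the zero-mode action on the top component, use Lemma~\ref{simple} to transfer irreducibility from the Zhu-algebra module to the whole module, and then match parameters against Theorem~\ref{main-bkrs}. However, there is a concrete gap in the step where you derive the irreducibility criterion from the abstract BKRS condition $h\notin i[j]$. You assert that $J(0)$ acts on the top component with eigenvalues in $\lambda+\Z$ and then test $h/i\notin[\lambda]$. In fact the eigenvalues are $\tfrac{x+1}{2}+\lambda+i$ (from $J\mapsto \tfrac12 d(-1)\mathbf 1+\tfrac12 I_{HVir}$ and $\langle d,\mu\rangle=1$), so the relevant class is $[j]=[\lambda+\tfrac{x+1}{2}]$. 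With the correct $[j]$ one gets $h/i-j_0=\tfrac{y}{x-1}+\tfrac{x-1}{2}-\tfrac{x+1}{2}-\lambda=\tfrac{y}{x-1}-\lambda-1$, and the half-integer shifts cancel, yielding exactly the stated criterion $\lambda\not\cong\tfrac{y}{x-1}\pmod\Z$ for \emph{all} integer $x$. With your $[j]=[\lambda]$ the residual shift $\tfrac{x-1}{2}$ does not cancel, and for $x$ even your test would declare reducible modules irreducible. You sense this tension but do not resolve it; the resolution is precisely the correct computation of the $J(0)$-eigenvalues.

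The second point is that the computation you defer as "the main obstacle" — the explicit action of $F(0)$ on the vectors $Z_i=v_{x,y}\otimes e^{\mu+(\lambda+i)c}$ — is in fact the entire engine of the paper's proof. The paper computes directly
$F(0)Z_i=\bigl(y-(\lambda+i)(x-1)\bigr)Z_{i-1}$, $E(0)Z_i=Z_{i+1}$, $I(0)Z_i=(x-1)Z_i$, $J(0)Z_i=\tfrac12(x+1+2(\lambda+i))Z_i$,
and reads off everything at once: the irreducibility criterion (some coefficient $y-(\lambda+i)(x-1)$ vanishes iff $\lambda\cong\tfrac{y}{x-1}$), the identification of the top with $\mathcal R_{x-1,[\cdot],y+(x-1)^2/2}$, and — since it is $F(0)$ (not $E(0)$) that kills $Z_j$ — the fact that the reducible top contains a \emph{lowest}-weight vector, hence is $\mathcal R^-$ and the module is $\widehat{\mathcal E}^-$. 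Doing this one computation would simultaneously close your $[j]$-gap and settle the sign question in case (3); without it, your argument is incomplete at both places.
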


 \begin{proof}[Proof of Theorem \ref{thm:relax} ]

Let $Z_i =  v_{x,y} \otimes e^{ \mu + (\lambda + i) c}$. Then using  Theorem \ref{inverse-realization}  we get:
\bea  L^{sug} _{\mathfrak h_4} (0) Z_i &=& (y- \frac{x-1}{2} )  Z_i,  \nonumber \\
   E(0)   Z_i &=& Z_{i+1}, \nonumber \\
   F(0) Z_i  & = & \left( y  { -}   (\lambda + i) (x -1) \right) Z_{i-1},   \nonumber \\
   I(0)  Z_i &=& (x-1) Z_i,  \nonumber \\
   J(0) Z_i &=& \frac{1}{2} (x+1 + 2 (\lambda + i)  ) Z_i .   \nonumber
    \eea

Assuming that $y  { -}  (x -1)(\lambda +i) \ne 0$ for all $i \in {\Z}$, we have that top component of $L^{HVir} [x,y] \otimes \Pi_{1}(\lambda)$ is irreducible  module for  $U (\mathfrak h_4)$. Now the simplicity follows from Lemma \ref{simple}.

Next, we want to identify  $M=L^{HVir} [x,y] \otimes \Pi_{1}(\lambda)$ with relaxed highest weight modules from  Theorem  \ref{main-bkrs}.  We have:
$$ I(0) \equiv (x-1) \mbox{Id} \ \mbox{on} \ M_{top}, \quad \Omega \equiv  (y+ \frac{(x-1)^2}{2})  \mbox{Id}  \  \mbox{on} \ M_{top}. $$
Thus $M_{top} \cong \mathcal R_{x-1,  y+ \frac{(x-1)^2}{2} }$.  Therefore 
  $\widehat{ \mathcal E}_{x-1,[\lambda],y+ \frac{(x-1)^2}{2}     } = L^{Vir} [x, y ] \otimes \Pi_{1}(\lambda)$ if $\lambda \not{\cong}  \frac{y}{x-1} \quad \mbox{mod} (\Z)$. 
 
 This proves the assertion (1).
Assume next that  $\lambda   { -}  \frac{y}{x-1}  = -j \in {\Z}$.  Then
  \bea F(0) Z_i &=&    \left( y  { -} (x -1)( \lambda +i)   \right) Z_{i-1}  
   = { (x-1) (j-i)} Z_{i-1}. \nonumber \eea
   This implies that $F(0) Z_j =0$.  This shows that the top component of $ (L^{HVir} [x, y] \otimes \Pi_{1} (\lambda) )_{top}$ contains  lowest weight vector for  the action of $\mathfrak h_4$, and therefore  $(L^{HVir} [x, y] \otimes \Pi_{1} (\lambda) )_{top}  \cong \mathcal R ^-_{i, h}$ for $i= x-1$, $h = y + \frac{(x-1)^2}{2}$. Now  Lemma  \ref{simple} implies that  $L^{HVir} [x, y] \otimes \Pi_{1} (\lambda) \cong  \mathcal E^ - _ {i,h}$, where $\mathcal E^-_{i,h} $ is relaxed module from Section  \ref{affine-nw-voa}.  This proves the assertion (3).

   Now let us prove assertions (2) and (4). For $x=1$ we get
\bea   E(0)   Z_i  =  Z_{i+1},  \ 
   F(0) Z_i   =  y Z_{i-1},  \ I(0) Z_i = 0, \ J(0) Z_i  =  (i+1 + \lambda) Z_i, \label{action-x1} \eea
   which implies that   $ ( L^{HVir} [1, y] \otimes \Pi_{1}(\lambda))_{top}$ is   isomorphic to the  $\mathfrak h_4$--module $\mathcal{R}_{0,[\lambda],y}$. Now  Lemma \ref{simple} implies that $ \widehat{\mathcal{R}}_{0,[\lambda],y}= L^{HVir} [1, y] \otimes \Pi_{1}(\lambda)$, thereby proving isomorphisms in  (2) and (4).     In the case $y \ne 0$, since  $\mathcal{R}_{0,[\lambda],y}$ is an irreducible $\mathfrak h_4$--module for $y \ne 0$, hence  $L^{HVir} [1, y] \otimes \Pi_{1}(\lambda)$ is also irreducible.

   Finally,  by comparing  with Theorem  \ref{main-bkrs}, we get that modules  in (1) and (2) give all irreducible relaxed modules whose top component is neither lowest nor highest weight $\mathfrak h_4$--module.
   
  The proof  of the theorem is now complete.

\end{proof}
{\bf  In the  remainder  of  this section we assume that $x, y, \lambda$  satisfy criteria
such that   relaxed modules $L^{Vir} [x, y] \otimes  \Pi_1(\lambda)$ are reducible.}
\vskip 5mm
Consider the module $L^{HVir} [x, y] \otimes \Pi_{1} (\lambda)$ for $x  \in {\Z}\setminus \{1\}$ and $\lambda   { -}  \frac{y}{x-1}  = -j \in {\Z}$. 
As above, we have  that $F(0) Z_j =0$. The top component of $L^{HVir} [x, y] \otimes \Pi_{1} (\lambda)$ contains conjugate highest weight module $L_1:= V^1(\mathfrak h_4) Z_j$ with conjugate highest weight vector $Z_j$  of  weight
   $$ (x-1, \frac{x+1}{2}    { +} \frac{y}{x-1}). $$

   Next, we consider the quotient module $ L_2 = L^{HVir} [x, y] \otimes \Pi_{1} (\lambda) / L_1$. It is a highest weight module with highest weight vector $\bar Z_{j-1}  = Z_{j-1} + L_1$ of highest weight
   $$ (x-1,   \frac{x-1}{2}  { +}  \frac{y}{x-1} ).$$
  The irreducibility of modules  $L_1$ and $L_2$ follows from Lemma \ref{simple} or by using   Theorem \ref{main-bkrs}. We get:

 \begin{lemma}   \label{str-33} Modules $L_1, L_2$ are irreducible modules, and
 $$ L_1 \cong  \widehat{\mathcal L}^- _{x-1, \frac{x+1}{2}  {  +} \frac{y}{x-1}}, L_2 \cong \widehat{ \mathcal L} ^+_ {x-1,   \frac{x-1}{2}  {  + }\frac{y}{x-1}}. $$
Moreover, $L^{HVir} [x, y] \otimes \Pi_{-1} (\lambda)$ is  the following non-split extension  of $V^1 (\mathfrak h_4) $--modules
   $$ 0 \rightarrow  L_1 \rightarrow  L^{HVir} [x, y] \otimes \Pi_{1} (\lambda)\rightarrow L_2 \rightarrow 0. $$
\end{lemma}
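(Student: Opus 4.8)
The plan is to establish the three assertions---irreducibility of $L_1$ and $L_2$, their identification with the standard modules $\widehat{\mathcal L}^-$ and $\widehat{\mathcal L}^+$, and the non-splitness of the short exact sequence---in that order. For irreducibility: $L_1 = V^1(\mathfrak h_4) Z_j$ is a submodule whose top component is, by the explicit action of $\mathfrak h_4$ computed in the proof of Theorem \ref{thm:relax}, a lowest weight $\mathfrak h_4$--module (recall $F(0)Z_j = 0$ and $E(0)Z_i = Z_{i+1}$ generate the whole top space from $Z_j$). By Lemma \ref{simple}(1) the ambient module $M = L^{HVir}[x,y]\otimes\Pi_1(\lambda)$ is almost irreducible, hence so is the submodule $L_1$; combined with the fact that the top component of $L_1$ is an irreducible lowest weight $\mathfrak h_4 = A(V^1(\mathfrak h_4))$--module, Lemma \ref{simple}(2) gives irreducibility of $L_1$. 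For $L_2 = M/L_1$: its top component is the image of the span of $\{Z_i : i < j\}$, which is a highest weight $\mathfrak h_4$--module generated by $\bar Z_{j-1}$ (using $F(0)\bar Z_{j-1} \ne 0$ pushes down the tower, while $E(0)$ going up lands in $L_1$); since for $x \ne 1$ the Verma $\mathcal V^+_{x-1,*}$ is already irreducible, the top of $L_2$ is an irreducible highest weight $\mathfrak h_4$--module, and $L_2$ is almost irreducible as a quotient of an almost irreducible module, so Lemma \ref{simple}(2) again applies. Alternatively one simply invokes Theorem \ref{main-bkrs}, which classifies these modules outright.

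For the identification of weights: from the displayed action in the proof of Theorem \ref{thm:relax}, the vector $Z_j$ has $I(0)$--eigenvalue $x-1$ and $L^{sug}_{\mathfrak h_4}(0)$--eigenvalue $y - \tfrac{x-1}{2}$; passing through the dictionary relating $(I(0), L^{sug}_{\mathfrak h_4}(0))$ to the $\mathfrak h_4$--weight $(i,j)$ via $J(0)Z_j = \tfrac12(x+1+2\lambda+2j)Z_j$ and the substitution $\lambda + j = \tfrac{y}{x-1}$, one reads off that $Z_j$ is a conjugate (lowest) highest weight vector of $\mathfrak h_4$--weight $\bigl(x-1,\ \tfrac{x+1}{2} + \tfrac{y}{x-1}\bigr)$, so $L_1 \cong \widehat{\mathcal L}^-_{x-1,\frac{x+1}{2}+\frac{y}{x-1}}$. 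Similarly $\bar Z_{j-1}$ has $\mathfrak h_4$--weight $\bigl(x-1,\ \tfrac{x-1}{2} + \tfrac{y}{x-1}\bigr)$ and is a genuine highest weight vector, giving $L_2 \cong \widehat{\mathcal L}^+_{x-1,\frac{x-1}{2}+\frac{y}{x-1}}$. This is the routine bookkeeping part.

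The main obstacle is non-splitness. Suppose the sequence $0 \to L_1 \to M \to L_2 \to 0$ split; then $M$ would contain a copy of $L_2$ as a submodule, i.e. a genuine $\mathfrak h_4$--highest weight vector in $M_{top}$ on which $E(0)$ acts as zero. But from $E(0)Z_i = Z_{i+1}$ the operator $E(0)$ is injective on $M_{top} = \operatorname{span}\{Z_i : i \in \mathbb Z\}$ (it is a shift with no kernel), so no highest weight vector for $\mathfrak h_4$ can exist in $M_{top}$; hence $M$ has no submodule isomorphic to $L_2$, and the extension is non-split. Equivalently, and perhaps more cleanly, one notes that $M$ is generated by the single vector $Z_{j-1}$ (again because $E(0)$ and $F(0)$ applied to $Z_{j-1}$, together with lower operators, reach every $Z_i$ up to the submodule structure), hence $M$ is indecomposable---so the length-two filtration cannot split. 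I would present the $E(0)$--injectivity argument as the primary one since it is the most transparent, and remark that it also matches the non-split sequence for $\widehat{\mathcal E}^-_{i,h}$ recorded in Theorem \ref{main-bkrs}, with $M \cong \widehat{\mathcal E}^-_{x-1,\,y+\frac{(x-1)^2}{2}}$ as already identified in Theorem \ref{thm:relax}(3).
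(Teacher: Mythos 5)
Your proposal is correct and follows essentially the same route as the paper, which at this point is quite terse: the paper reads off the weights of $Z_j$ and $\bar Z_{j-1}$ from the $J(0)$--action computed in the proof of Theorem \ref{thm:relax} (using $\lambda+j=\tfrac{y}{x-1}$), and disposes of irreducibility by citing Lemma \ref{simple} or Theorem \ref{main-bkrs}, exactly as you do. Your $E(0)$--injectivity argument for non-splitness is in fact more explicit than anything in the paper (which leaves non-splitness to the identification with $\widehat{\mathcal E}^-_{x-1,h}$ and Theorem \ref{main-bkrs}), and it has the advantage of covering the case $x=0$, i.e.\ $i=-1$, which Theorem \ref{main-bkrs} excludes but which is needed later in Corollary \ref{str-voa-relaxed}. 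Two small caveats: (i) your fallback ``$M$ is cyclic, hence indecomposable, hence the sequence cannot split'' is a non sequitur --- a direct sum of two non-isomorphic simples is cyclic --- so keep the $E(0)$--injectivity argument as the actual proof; (ii) ``a quotient of an almost irreducible module is almost irreducible'' is not automatic and would need a word of justification (one must rule out a submodule $N\supsetneq L_1$ with $N\cap M_{top}=L_1\cap M_{top}$), but since you also offer the classification of Theorem \ref{main-bkrs} as an alternative, this does not affect the validity of the proof.
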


   By applying the automorphism $g$  constructed in Section \ref{affine-nw-voa},  we get  the following consequence of Lemma \ref{str-33}:
   \begin{theorem} \label{str-relaxed-red} 
   We have:
   \item[(1)]  $ L^{HVir} \otimes   \Pi_{r } (\lambda) = g^{r-1} ( L^{HVir} [x, y] \otimes \Pi_{1} (\lambda))$;

  \item[(2)]  $L^{HVir} [x, y] \otimes \Pi_{r} (\lambda)$ is of length two and  it is  the following non-split  extension  of $V^1(\mathfrak h_4)$--modules
   $$ 0 \rightarrow g ^{r-1 } (L_1) \rightarrow   L^{HVir} [x, y] \otimes \Pi_{r} (\lambda)\rightarrow g^{r -1} (L_2) \rightarrow 0. $$

 \item[(3)]  $L^{HVir} [x-1,\frac{x-2}{x-1} y] \otimes \Pi_{r-1} (\lambda)$ is  the following non-split extension  of  $V^1(\mathfrak h_4)$--modules
   $$ 0 \rightarrow g^{r-2}( L_3)  \rightarrow  L^{HVir} [x-1,\frac{x-2}{x-1} y] \otimes \Pi_{r-1} (\lambda) \rightarrow g^{r-2}(L_4) \rightarrow 0, $$
   where ${ L_3 =  \widehat{\mathcal L}^-_{x-2,  \frac{x}{2} +\frac{y}{x-1}} }$,
   $L_4 =  \widehat{\mathcal L}^+_{x-2,  \frac{x-2}{2} +\frac{y}{x-1}}$.

\end{theorem}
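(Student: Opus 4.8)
The plan is to derive Theorem \ref{str-relaxed-red} as a direct consequence of Lemma \ref{str-33} together with the description of the automorphism $g$ and its action on modules, which was recorded in Section \ref{affine-nw-voa}. Recall that $g = \sigma \circ s_{-1/2}$ and that for a $V^1(\mathfrak h_4)$--module $M$ the twisted module $\rho_1(M)$ (equivalently $g(M)$, after identifying conventions) acts on $E,F,I,J$ via the explicit $z$--shifts displayed there. In the realization of Theorem \ref{inverse-realization}, $E \mapsto e^c$, $F \mapsto (\cdots) e^{-c}$, so conjugating by the spectral flow shifts the $\C[\Z c]$--grading of the $\Pi$--factor; concretely, applying $g$ to $L^{HVir}[x,y]\otimes\Pi_r(\lambda)$ shifts $r$ by one while leaving the $L^{HVir}$--factor untouched (since $I_{HVir}, T_{HVir}$ are built from the complementary Heisenberg lattice and commute with $e^{\pm c}$). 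This yields (1): $L^{HVir}\otimes\Pi_r(\lambda) = g^{r-1}(L^{HVir}[x,y]\otimes\Pi_1(\lambda))$ — here one must double-check that the stated equality is really meant with the common $L^{HVir}$--factor being $L^{HVir}[x,y]$ and the index bookkeeping matches, since applying $g^{r-1}$ moves $\Pi_1$ to $\Pi_r$.

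For (2), I would simply apply the exact functor $g^{r-1}(-)$ to the non-split short exact sequence of Lemma \ref{str-33},
$$ 0 \rightarrow L_1 \rightarrow L^{HVir}[x,y]\otimes\Pi_1(\lambda) \rightarrow L_2 \rightarrow 0. $$
Since $g$ is a vertex algebra automorphism, $g^{r-1}$ sends $V^1(\mathfrak h_4)$--modules to $V^1(\mathfrak h_4)$--modules, preserves length, and preserves (non-)splitting of extensions: a splitting of the image sequence would pull back along $g^{-(r-1)}$ to a splitting of the original. Combined with part (1), which identifies the middle term $g^{r-1}(L^{HVir}[x,y]\otimes\Pi_1(\lambda))$ with $L^{HVir}[x,y]\otimes\Pi_r(\lambda)$, this gives exactly the claimed non-split sequence with outer terms $g^{r-1}(L_1)$ and $g^{r-1}(L_2)$. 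The irreducibility of $g^{r-1}(L_1), g^{r-1}(L_2)$ is automatic since $L_1, L_2$ are irreducible and automorphisms preserve irreducibility.

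For (3) the mechanism is the same but applied to a \emph{different} relaxed module. Starting from $L^{HVir}[x-1, \frac{x-2}{x-1}y]\otimes\Pi_1(\lambda)$, one runs the analysis of Lemma \ref{str-33} with $x$ replaced by $x-1$ and $y$ replaced by $\tfrac{x-2}{x-1}y$: the resulting conjugate-highest-weight submodule and highest-weight quotient are computed from the same weight formulas, giving $L_3 = \widehat{\mathcal L}^-_{x-2,\ \frac{x}{2}+\frac{y}{x-1}}$ and $L_4 = \widehat{\mathcal L}^+_{x-2,\ \frac{x-2}{2}+\frac{y}{x-1}}$ (here one substitutes and simplifies: the weight $\tfrac{(x-1)+1}{2}+\tfrac{\,\frac{x-2}{x-1}y\,}{(x-1)-1}$ collapses to $\tfrac{x}{2}+\tfrac{y}{x-1}$, using $\tfrac{x-2}{x-1}\cdot\tfrac{1}{x-2}=\tfrac{1}{x-1}$, and similarly for the quotient). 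Then apply $g^{r-2}$ to the corresponding non-split sequence and invoke part (1) (shifted by one) to identify the middle term with $L^{HVir}[x-1,\frac{x-2}{x-1}y]\otimes\Pi_{r-1}(\lambda)$. The main obstacle is purely bookkeeping: verifying that the $g$--shift acts on the $\Pi_r$ index by $r \mapsto r+1$ with the correct sign and offset (so that $g^{r-1}$ and $g^{r-2}$ land on $\Pi_r$ and $\Pi_{r-1}$ respectively), that $g$ fixes the $L^{HVir}$--factor, and that the weight substitutions in (3) simplify to the stated $L_3, L_4$; once these are pinned down, exactness and non-splitting are formal consequences of $g$ being an automorphism.
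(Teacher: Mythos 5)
Your proposal is correct and follows essentially the same route as the paper: the paper states Theorem \ref{str-relaxed-red} as a direct consequence of Lemma \ref{str-33} obtained by applying the spectral flow automorphism $g$ (which fixes the $L^{HVir}$--factor and shifts the $\Pi_r$ index), with part (3) coming from the same lemma applied to $(x-1,\tfrac{x-2}{x-1}y)$. Your extra checks --- that $g^{r-1}$ preserves exactness, irreducibility and non-splitting, and the simplification of the weights to the stated $L_3, L_4$ --- are exactly the bookkeeping the paper leaves implicit, and your observation that the left-hand side of (1) should read $L^{HVir}[x,y]\otimes\Pi_r(\lambda)$ is a correct reading of a typo in the statement.
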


Now we consider one important  special case:

\begin{corollary} \label{str-voa-relaxed}
The  $ L^{HVir} \otimes \Pi$--module $L^{HVir} [0, - \lambda] \otimes \Pi_0 (\lambda)$  is  the following non-split extension of $V^1(\mathfrak h_4)$--modules:
\bea  0 \rightarrow  \widehat{ \mathcal L}_{0, \lambda}   \rightarrow  L^{HVir} [0, - \lambda] \otimes \Pi_0 (\lambda)  \rightarrow  g^{-1} (\widehat{\mathcal L}^+_{-1,\lambda  -1/2})  \rightarrow 0. \label{top-1-nonsplit} \eea
{It  is a  cyclic  $V^1(\mathfrak h_4)$--module generated by the  cyclic vector   $v_{0, -\lambda} \otimes e ^{(\lambda-1 )c}$.}
 In particular, the vertex algebra $V^1 (\mathfrak h_4)$ appears in the  following non-split extension:
\bea  0 \rightarrow  V^1 (\mathfrak h_4)  \rightarrow   L^{HVir}  \otimes \Pi \rightarrow  g^{-1} (\widehat{\mathcal L}^+_{-1,  -1/2})  \rightarrow 0. \label{top-2-nonsplit} \eea
 \end{corollary}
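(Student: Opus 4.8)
The plan is to obtain Corollary~\ref{str-voa-relaxed} as the $r=0$ specialization of Theorem~\ref{str-relaxed-red}, after pinning down the single interesting case $x=0$. First I would set $x=0$, $y=-\lambda$ in Lemma~\ref{str-33}: the reducibility condition $\lambda - \frac{y}{x-1} = -j \in \Z$ becomes $\lambda - \frac{-\lambda}{-1} = \lambda - \lambda = 0 \in \Z$, so $j=0$ and the hypotheses of Lemma~\ref{str-33} are met. That lemma then gives the non-split sequence $0 \to L_1 \to L^{HVir}[0,-\lambda]\otimes \Pi_1(\lambda) \to L_2 \to 0$ with $L_1 \cong \widehat{\mathcal L}^-_{-1,\,\frac12+\frac{y}{x-1}} = \widehat{\mathcal L}^-_{-1,\,\lambda-\frac12}$ and $L_2 \cong \widehat{\mathcal L}^+_{-1,\,-\frac12+\lambda-\frac12} = \widehat{\mathcal L}^+_{-1,\,\lambda-1}$. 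Applying Theorem~\ref{str-relaxed-red} with $r=0$ shifts this by $g^{-1}$: $g^{-1}(L^{HVir}[0,-\lambda]\otimes\Pi_1(\lambda)) = L^{HVir}[0,-\lambda]\otimes\Pi_0(\lambda)$ (part (1) of that theorem reads $L^{HVir}\otimes\Pi_r(\lambda) = g^{r-1}(L^{HVir}[x,y]\otimes\Pi_1(\lambda))$; here the left side should of course carry the same $[x,y]$ label, and at $r=0$ we get exactly the module in the Corollary), sitting in $0 \to g^{-1}(L_1) \to L^{HVir}[0,-\lambda]\otimes\Pi_0(\lambda) \to g^{-1}(L_2) \to 0$.

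Next I would identify $g^{-1}(L_1)$. Since $L_1 \cong \widehat{\mathcal L}^-_{-1,\lambda-1/2} = g(V^1(\mathfrak h_4))$ twisted appropriately — recall from Section~\ref{affine-nw-voa} that $g(V^1(\mathfrak h_4)) = \widehat{\mathcal L}^-_{-1,1/2}$ and, more generally, $g$ sends highest weight modules $\widehat{\mathcal L}_{0,\mu}$ to $\widehat{\mathcal L}^-_{-1,\mu+1/2}$-type modules (this follows directly from the explicit action of $g=\sigma\circ s_{-1/2}$ on the generators $E,F,I,J$ recorded in that section, together with the weight shifts $I(0)\mapsto I(0)-K$, $J(0)\mapsto J(0)+\frac12 K$). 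Thus $g^{-1}(\widehat{\mathcal L}^-_{-1,\lambda-1/2}) \cong \widehat{\mathcal L}_{0,\lambda}$, giving the submodule in \eqref{top-1-nonsplit}. The quotient $g^{-1}(L_2) = g^{-1}(\widehat{\mathcal L}^+_{-1,\lambda-1})$ is kept in the stated $g^{-1}(\cdot)$ form. Setting $\lambda = 0$ yields \eqref{top-2-nonsplit}, once one checks $\widehat{\mathcal L}_{0,0} = V^1(\mathfrak h_4)$ (stated in Section~\ref{affine-nw-voa}) and that $L^{HVir}[0,0]\otimes\Pi_0(0) = L^{HVir}\otimes\Pi$.

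For the cyclicity claim, I would argue directly from the realization in Theorem~\ref{inverse-realization}. The vector $v_{0,-\lambda}\otimes e^{(\lambda-1)c}$ is, up to the $\Pi_1\to\Pi_0$ identification, the vector $Z_{j-1}$ at $j=0$, i.e. $v_{x,y}\otimes e^{\mu+(\lambda-1)c}$ in the notation of the proof of Theorem~\ref{thm:relax} after applying $g^{-1}$. By Lemma~\ref{simple}(1) the module $L^{HVir}[x,y]\otimes\Pi_1(\lambda)$ is almost irreducible, hence generated by its top component, and $Z_{j-1}$ projects onto the highest weight vector $\bar Z_{j-1}$ of the irreducible quotient $L_2$; since the submodule $L_1$ is itself cyclic (being a highest/lowest weight module) and one checks $F(0)^{?}$ or $E(0)^{?}$ applied to $Z_{j-1}$ reaches into $L_1$ nontrivially (equivalently, the extension is non-split so no proper submodule contains $Z_{j-1}$ while surjecting onto $L_2$), the vector $Z_{j-1}$ generates the whole module; pushing through $g^{-1}$ gives the stated cyclic vector. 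The main obstacle I expect is precisely this last point: verifying that $v_{0,-\lambda}\otimes e^{(\lambda-1)c}$ \emph{generates} rather than merely maps onto the quotient — one must show the submodule it generates is not a complement to $L_1$, which uses non-splitness of the extension (already established) together with the observation that $L_1$ has no $V^1(\mathfrak h_4)$-submodule disjoint from the cyclic vector's image; a short weight-space computation with the explicit $E(0),F(0)$ actions from the proof of Theorem~\ref{thm:relax} should close this. The rest is bookkeeping with the spectral-flow-type automorphism $g$ and the labels of highest weight $\mathfrak h_4$-modules.
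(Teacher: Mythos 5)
Your overall route is exactly the paper's: specialize Lemma \ref{str-33} / Theorem \ref{str-relaxed-red} to $x=r=0$, $y=-\lambda$, apply $g^{-1}$, identify $g^{-1}(L_1)$ with $\widehat{\mathcal L}_{0,\lambda}$, and set $\lambda=0$ for the second sequence. However, your evaluation of the weights of $L_1$ and $L_2$ is wrong, and the error propagates so that what you prove is not the statement of the corollary. With $x=0$, $y=-\lambda$ one has $\frac{y}{x-1}=\frac{-\lambda}{-1}=\lambda$, so Lemma \ref{str-33} gives $L_1=\widehat{\mathcal L}^-_{-1,\,\lambda+\frac12}$ and $L_2=\widehat{\mathcal L}^+_{-1,\,\lambda-\frac12}$; you instead substitute as if $\frac{y}{x-1}=\lambda-1$ and obtain $L_1=\widehat{\mathcal L}^-_{-1,\,\lambda-\frac12}$ and $L_2=\widehat{\mathcal L}^+_{-1,\,\lambda-1}$. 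Your quotient $g^{-1}(\widehat{\mathcal L}^+_{-1,\lambda-1})$ therefore does not match the corollary's $g^{-1}(\widehat{\mathcal L}^+_{-1,\lambda-\frac12})$ (and at $\lambda=0$ does not reduce to \eqref{top-2-nonsplit}). Moreover, your identification of the submodule is internally inconsistent: by the rule you yourself state, $g(\widehat{\mathcal L}_{0,\mu})=\widehat{\mathcal L}^-_{-1,\mu+\frac12}$, your $L_1$ would give $g^{-1}(\widehat{\mathcal L}^-_{-1,\lambda-\frac12})=\widehat{\mathcal L}_{0,\lambda-1}$, not $\widehat{\mathcal L}_{0,\lambda}$; you land on the correct answer only because a second slip compensates the first. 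With the correct substitution everything closes cleanly, exactly as in the paper: $g^{-1}(\widehat{\mathcal L}^-_{-1,\lambda+\frac12})=\widehat{\mathcal L}_{0,\lambda}$ and the quotient is $g^{-1}(\widehat{\mathcal L}^+_{-1,\lambda-\frac12})$. So this is a fixable arithmetic defect rather than a structural gap, but as written the proof does not establish the stated sequences. Your discussion of cyclicity (via almost-irreducibility from Lemma \ref{simple} and non-splitness) is a reasonable supplement; the paper's proof does not spell this out.
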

 
\begin{proof}  Applying Theorem  \ref{str-relaxed-red}   to the case  $x = r =0$ and letting $y = - \lambda$, we get $L_1 = \widehat{\mathcal L}^- _{-1, \lambda + \frac{1}{2} }$, $L_2  =\widehat{ \mathcal L} ^+_ {-1,  \lambda - \frac{1}{2} }$ and  non-split extension 
  $$ 0 \rightarrow g ^{-1 } (L_1) \rightarrow   L^{HVir} [x, y] \otimes \Pi_{r} (\lambda)\rightarrow g^{ -1} (L_2) \rightarrow 0. $$
Since   $g^{-1} (\widehat{\mathcal L}^- _{-1,   \lambda + 1/2})  = \widehat{\mathcal L}_{0, \lambda}$,
 we  get non-split  extension  (\ref{top-1-nonsplit}). Second assertion follows by specializing $\lambda = 0$, and using the  fact $V^1 (\mathfrak h_4)\cong  \widehat{ \mathcal L}_{0,0}$.
 \end{proof}

\begin{corollary}  \label{str-i0}    Module 
  $\widehat{\mathcal{R}}_{0,[\lambda],0 } =   L^{HVir}[1,  0] \otimes \Pi_{1}(\lambda)$ is reducible 
and for  each $\lambda \in {\C}$ and $i \in {\Z}$, we have the following non-split sequence of  $V^1 (\mathfrak h_4) $--modules:
$$ 0 \rightarrow  \langle Z_{i} \rangle  \rightarrow  \langle   Z_{i-1} \rangle   \rightarrow \widehat {\mathcal  L}_{0,\lambda + i} \rightarrow 0,  $$
where  $Z_i =  v_{1, 0} \otimes e^{ \mu + (\lambda + i) c}$, $\langle Z_i \rangle = V^1 (\mathfrak h_4). Z_i$. 
In particular,    $ L^{HVir}[1,  0] \otimes \Pi_{1}(\lambda)$  has infinite length.
\end{corollary}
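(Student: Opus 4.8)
The plan is to extract everything from the explicit realization in Theorem~\ref{thm:relax} with $x=1$, $y=0$, and then to invoke the almost irreducibility of relaxed modules (Lemma~\ref{simple}(1)) to identify the subquotients. Write $M:=L^{HVir}[1,0]\otimes\Pi_1(\lambda)$ and recall from the proof of Theorem~\ref{thm:relax} (specialize $y=0$ in~(\ref{action-x1})) that on $Z_i=v_{1,0}\otimes e^{\mu+(\lambda+i)c}$ one has $L^{sug}_{\mathfrak h_4}(0)Z_i=0$, $E(0)Z_i=Z_{i+1}$, $F(0)Z_i=0$, $I(0)Z_i=0$, and $J(0)Z_i=(i+1+\lambda)Z_i$. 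Thus the $Z_i$ span $M_{top}$, and as an $\mathfrak h_4$-module $M_{top}\cong\mathcal R_{0,[\lambda],0}$ is reducible: since $E(0)$ raises the index by one while $F(0)$ and $I(0)$ act by zero, the $\mathfrak h_4$-submodules of $M_{top}$ form the chain $W_i:=\mbox{span}_{\C}\{Z_j:j\ge i\}$. In particular $\widehat{\mathcal R}_{0,[\lambda],0}$ is reducible.

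I would then analyze the submodules $\langle Z_i\rangle:=V^1(\mathfrak h_4).Z_i$. From $Z_{i+1}=E(0)Z_i$ we get $\langle Z_{i+1}\rangle\subseteq\langle Z_i\rangle$, hence a descending chain $\cdots\subseteq\langle Z_{i+1}\rangle\subseteq\langle Z_i\rangle\subseteq\langle Z_{i-1}\rangle\subseteq\cdots$. The $L^{sug}_{\mathfrak h_4}(0)$-weight-zero part of $\langle Z_i\rangle$ is the Zhu-algebra submodule $A(V^1(\mathfrak h_4)).Z_i=U(\mathfrak h_4).Z_i=W_i$, so $\langle Z_i\rangle\cap M_{top}=W_i$ and the chain is \emph{strictly} descending; this alone already forces $M$ to have infinite length. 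The quotient $Q_i:=\langle Z_{i-1}\rangle/\langle Z_i\rangle$ is a highest weight $V^1(\mathfrak h_4)$-module generated by the image $\bar Z_{i-1}$ of $Z_{i-1}$, whose top component $W_{i-1}/W_i$ is one-dimensional with $E(0)\bar Z_{i-1}=\bar Z_i=0$, $F(0)=I(0)=0$ and $J(0)=(i+\lambda)\,\mbox{Id}$; hence $(Q_i)_{top}\cong\mathcal L_{0,\lambda+i}$, so $Q_i$ has $\widehat{\mathcal L}_{0,\lambda+i}$ as its unique irreducible quotient.

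\textbf{The main obstacle} is to upgrade this to $Q_i\cong\widehat{\mathcal L}_{0,\lambda+i}$, i.e.\ to prove that $\langle Z_i\rangle$ is a maximal submodule of $\langle Z_{i-1}\rangle$. I would argue as follows. Let $\tilde N$ be a submodule with $\langle Z_i\rangle\subseteq\tilde N\subseteq\langle Z_{i-1}\rangle$ and $\tilde N\ne\langle Z_{i-1}\rangle$. By Lemma~\ref{simple}(1), $M$ is almost irreducible, so $\tilde N\cap M_{top}\ne 0$ and, being an $\mathfrak h_4$-submodule of $M_{top}$ lying between $W_i$ and $W_{i-1}$, it equals $W_i$ or $W_{i-1}$; the value $W_{i-1}$ would give $Z_{i-1}\in\tilde N$ and hence $\tilde N=\langle Z_{i-1}\rangle$, contrary to assumption, so $\tilde N\cap M_{top}=W_i$. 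Passing to $M/\langle Z_i\rangle$, the image $\tilde N/\langle Z_i\rangle$ is a submodule meeting the top component $M_{top}/W_i$ trivially; excluding this --- equivalently, showing that $M/\langle Z_i\rangle$ has simple socle equal to its one-dimensional bottom layer $\C\bar Z_{i-1}$ --- is carried out exactly by the relaxed-module argument used in the proof of Lemma~\ref{simple}, following \cite{AKR-2021}. Granting this, $\tilde N=\langle Z_i\rangle$, so $Q_i$ is irreducible and $\cong\widehat{\mathcal L}_{0,\lambda+i}$. I expect this to be the only genuinely delicate point.

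Non-splitness of $0\to\langle Z_i\rangle\to\langle Z_{i-1}\rangle\to\widehat{\mathcal L}_{0,\lambda+i}\to 0$ is then a short computation: a splitting would realize $\widehat{\mathcal L}_{0,\lambda+i}$ as a submodule $B$ with $\langle Z_{i-1}\rangle=\langle Z_i\rangle\oplus B$, and intersecting with $M_{top}$ gives $W_{i-1}=W_i\oplus(B\cap M_{top})$, so $B$ contains $v=cZ_{i-1}+w$ with $c\ne 0$ and $w\in W_i$. Then $E(0)v=cZ_i+E(0)w$ with $E(0)w\in W_{i+1}$, so $E(0)v\ne 0$; but $E(0)v\in B\cap W_i\subseteq B\cap\langle Z_i\rangle=0$, a contradiction. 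Finally, the strictly descending chain $(\langle Z_i\rangle)_{i\in\Z}$ --- whose union equals $M$ by almost irreducibility, since $\bigcup_i W_{-i}=M_{top}$ --- shows that $L^{HVir}[1,0]\otimes\Pi_1(\lambda)$ has infinite length, completing the argument.
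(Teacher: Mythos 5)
Your argument is correct and follows essentially the same route as the paper's proof: compute the zero-mode action on the vectors $Z_i$, deduce the strictly descending chain $\langle Z_{i-1}\rangle\supset\langle Z_i\rangle$, and invoke the almost irreducibility of $L^{HVir}[1,0]\otimes\Pi_1(\lambda)$ from Lemma~\ref{simple} to identify each successive quotient with $\widehat{\mathcal L}_{0,\lambda+i}$. You supply several details the paper leaves implicit --- strictness of the chain via $\langle Z_i\rangle\cap M_{top}=W_i$, the explicit non-splitness check, and the correct observation that identifying $\langle Z_{i-1}\rangle/\langle Z_i\rangle$ with the irreducible module needs the finer submodule-versus-top-component correspondence from the \cite{AKR-2021}-style argument rather than bare almost irreducibility --- which is exactly the step the paper compresses into a single sentence.
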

 \begin{proof}
  First we notice that for each $i \in {\Z}$:
\bea   E(0)   Z_i  =  Z_{i+1},  \ 
   F(0) Z_i   =  0,   \ I(0) Z_i = 0, \ J(0) Z_i  =  (i+1 + \lambda) Z_i,\nonumber \eea
   which implies that  we have a proper sequence of submodules
   $$ \cdots   \supset \langle Z_{i-1} \rangle \supset   \langle Z_{i} \rangle  \supset    \langle Z_{i+1} \rangle \supset \cdots .   $$
    Since  by Lemma \ref{simple}  $L^{HVir}[1,  0] \otimes \Pi_{1}(\lambda)$ is almost irreducible we  conclude that
  $$ \widehat{ \mathcal  L}_{0,\lambda + i} \cong  \langle  Z_{i-1} \rangle   /    \langle Z_{i} \rangle. $$
  The proof follows.
 \end{proof}
\begin{remark}
The self-dual $HVir$--module $L^{HVir} [1,y]$ for $y\ne 0$ cannot be easily constructed  via  an embedding of $L^{HVir}$ into the rank two Heisenberg vertex algebra (cf.  \cite[Remark 2.2]{AR}). Later, in \cite{AR2}, a modifed realization of  $L^{HVir} [1,y]$ was presented using Whittaker $\Pi$--modules and logarithmic deformations.   This is the underlying reason behind the realization of   $\widehat{\mathcal R}_{0,[\lambda],y} $  for $y \ne 0$ being considerably  more complicated.
\end{remark}


\section{Logarithmic $V^1 (\mathfrak h_4)$--modules}

\subsection{Construction of logarithmic modules}

We shall apply the realization of    $V^1 (\mathfrak h_4)$ inside $L^{HVir} \otimes \Pi$ and the screening operator $S$ for this realization in order to present an explicit realization of logarithmic $V^1 (\mathfrak h_4)$ - modules. We shall use methods developed in \cite{AdM-selecta} and  the fusion rules for $L^{HVir}$--modules obtained in \cite{AR}.

\begin{definition} Let $\mathcal C$ be the category of $V^1 (\mathfrak h_4)$ modules having finite-dimensional generalized weight spaces for $I(0), J(0), L_{\mathfrak h_4} ^{Sug}(0)$, such that $I(0)$ and $J(0)$ act semi-simply.
Let $\mathcal C^{fin}$ be the subcategory of $\mathcal C$ containing modules of finite socle length.
\end{definition}

\begin{remark}
A version of these categories appears in \cite{AKR-2023, ACK-23, FT} in the framework of affine vertex algebras and affine $W$--algebras.  We also believe that $\mathcal C^{fin}$ is  a suitable candidate for obtaining a vertex tensor category structure.
\end{remark}

\vskip 5mm




Now we shall apply the strategy used in the construction of logarithmic modules from Subsection \ref{constr-log-uvod} in the following setting:
\begin{itemize}
\item Let $V = L^{HVir} \otimes \Pi$, $M=  L^{HVir}[-1,0] \otimes \Pi_{-1}(0)$ and $\mathcal V = L^{HVir} \otimes \Pi \oplus  L^{HVir}[-1,0] \otimes \Pi_{-1}(0)$.
\item Let $s=  v_{-1,0} \otimes e^{\nu} \in  L^{HVir}[-1,0] \otimes \Pi_{-1}(0)$. Then $S = s_0 = Res_z Y(s,z)$ is a screening operator.
\item   Since  $V^1 (\mathfrak h_4)  = \mbox{Ker}_ {L^{HVir} \otimes \Pi } { S}  $ and  $S \vert  L^{HVir}[-1,0] \otimes \Pi_{-1}(0) \equiv  0$ by construction of vertex algebra $\mathcal V$ 
we get that
$$   V^1 (\mathfrak h_4)  \subset \mbox{Ker}_{\mathcal V}  S =   V^1 (\mathfrak h_4)  \oplus L^{HVir}[-1,0] \otimes \Pi_{-1}(0) .$$

\end{itemize}


Next we shall use the following fusion rules result from \cite{AR}  when  $x \in {\Z}$ and $y \in {\C}$:
\begin{itemize}
\item $L^{HVir} [-1, 0] \times L^{HVir}[x,y] =  L^{HVir}[x-1, \frac{x-2}{x-1} y] $ for $x \in \Z \setminus \{1\}$;
\item $L^{HVir}[-1,0] \times L^{HVir} [1,y ] = 0$  for $y \ne 0$;
\item $L^{HVir}[-1,0] \times L^{HVir} [1,0 ]  = \sum _{y \in {\C}} L^{HVir}[0,y]$, in the sense that for each $y \in {\C}$ we have non-zero intertwining operator of type
$$ { L^{HVir}[0,y] \choose L^{HVir} [-1, 0] \ \ L^{HVir} [1,0] }. $$

\end{itemize}
This implies that in the category of $L^{HVir} \otimes \Pi$--modules we have the fusion rules:
$$  (L^{HVir} [-1, 0]  \otimes \Pi_{-1}(0)  ) \times  (L^{HVir} [x, y] \otimes \Pi_r(\lambda)) =  L^{HVir} [x-1,  \frac{x-2}{x-1} y] \otimes \Pi_{r-1}(\lambda).  $$
So the candidate for $\mathcal V$--module is  $$U[x,y, \lambda, r] :=L^{HVir} [x, y] \otimes \Pi_r(\lambda) \oplus   L^{HVir} [x-1,  \frac{x-2}{x-1} y] \otimes \Pi_{r-1}(\lambda) \quad (r \in {\Z}, \lambda \in {\C}).$$

Applying Lemma  \ref{AdM-2012}, we see that the necessary and sufficient condition for $U[x,y, \lambda, r] $ to be a  $\mathcal V$--module is that all conformal weights are congruent modulo ${\Z}$. Using Lemma \ref{racun-1} we see that all conformal weights are congruent modulo ${\Z}$ if and only if 
$$ -\frac{y}{x-1} -1 + r + \lambda \in {\Z}. $$
 We get:
  \begin{lemma} Assume that $x \in {\Z}$, $x \ne 1$, $r \in {\Z}$. Then
  $U[x,y, \lambda, r]$ is a  $\mathcal V$--module if and only if  $   \lambda   \equiv { \frac{y}{x-1}   }  \mbox{mod} \  {\Z}$.
  Moreover, the action of the screning operator $S$    on  $U[x,y,  \lambda  , r]$   is given by
 \bea     S \vert _{ U[x,y,  \lambda , r]}= \mbox{Res}_z \mathcal Y (s, z),  \label{scr-act-new} \eea
  where 
  $\mathcal Y$ is the intertwining operator of type
  $${  L^{HVir}[x-1, \frac{x-2}{x-1} y]  \otimes \Pi_{r-1} ( \lambda  )  \choose  L^{HVir} [-1, 0]  \otimes \Pi_{-1}(0)   \ \  L^{HVir} [x, y] \otimes \Pi_r( \lambda)}. $$
  \end{lemma}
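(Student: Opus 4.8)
The plan is to verify, in the present setting, the two hypotheses of Lemma \ref{AdM-2012} with $V=L^{HVir}\otimes\Pi$, $M=L^{HVir}[-1,0]\otimes\Pi_{-1}(0)$, $M_2=L^{HVir}[x,y]\otimes\Pi_r(\lambda)$ and $M_3=L^{HVir}[x-1,\frac{x-2}{x-1}y]\otimes\Pi_{r-1}(\lambda)$: the existence of an intertwining operator $\mathcal Y$ of type $\binom{M_3}{M\ \ M_2}$, and the possibility of choosing it with integral powers of $z$. For the first point, since $x\neq 1$ the $L^{HVir}$--fusion rule $L^{HVir}[-1,0]\times L^{HVir}[x,y]=L^{HVir}[x-1,\frac{x-2}{x-1}y]$ from \cite{AR} gives a one--dimensional space of intertwining operators of type $\binom{L^{HVir}[x-1,\frac{x-2}{x-1}y]}{L^{HVir}[-1,0]\ \ L^{HVir}[x,y]}$, and the $\Pi$--fusion rule $\Pi_{-1}(0)\times\Pi_r(\lambda)=\Pi_{r-1}(\lambda)$ from Section \ref{pi-voa} gives a one--dimensional space of intertwining operators of type $\binom{\Pi_{r-1}(\lambda)}{\Pi_{-1}(0)\ \ \Pi_r(\lambda)}$; the tensor product of generators of these two spaces is a nonzero intertwining operator $\mathcal Y$ for $V$ of the required type, unique up to a scalar.

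For the second, decisive point I would pin down the coset of $z$--powers occurring in $\mathcal Y$. The module $M$ has integral $L(0)$--weights — by Lemma \ref{racun-1} applied with $(x,r,\lambda)=(-1,-1,0)$ its top weight equals $-1$ — so $\mathcal Y(w,z)$ shifts $L(0)$--weight by an integer for homogeneous $w$; moreover each of $M$, $M_2$, $M_3$ is graded by a single coset of $\Z$ (the grading is unbounded below when $r\neq 1$, by Lemma \ref{racun-1}, but still indexed by one coset). Hence the powers of $z$ in $\mathcal Y$ lie in $h(M_2)-h(M_3)+\Z$ and are integral precisely when $h(M_2)\equiv h(M_3)\pmod{\Z}$. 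Evaluating the top--weight formula of Lemma \ref{racun-1} on $M_2$ and on $M_3$ (the latter via the substitution $x\mapsto x-1$, $y\mapsto\frac{x-2}{x-1}y$, $r\mapsto r-1$) and subtracting, a short computation gives $h(M_2)-h(M_3)=\frac{y}{x-1}+1-r-\lambda$; since $r\in\Z$, this lies in $\Z$ iff $\lambda\equiv\frac{y}{x-1}\pmod{\Z}$. When the congruence holds, $\mathcal Y$ has integral powers and Lemma \ref{AdM-2012} endows $U[x,y,\lambda,r]=M_2\oplus M_3$ with a $\mathcal V$--module structure; when it fails, the (nonzero, essentially unique) $\mathcal Y$ has no integral power of $z$, so the construction of Lemma \ref{AdM-2012} does not apply.

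Finally, for the action of the screening operator I would specialize the explicit vertex operator of Lemma \ref{AdM-2012}: for $s\in M$ one has $Y_{M_2\oplus M_3}(s,z)(w_2+w_3)=\mathcal Y(s,z)w_2$, whence $S|_{U[x,y,\lambda,r]}=\mbox{Res}_z\,Y(s,z)=\mbox{Res}_z\,\mathcal Y(s,z)$ maps $M_2$ into $M_3$ and annihilates $M_3$; since $s=v_{-1,0}\otimes e^{\nu}\in L^{HVir}[-1,0]\otimes\Pi_{-1}(0)$ by Theorem \ref{inverse-realization} (recall $e^{\nu}\in\Pi_{-1}(0)$), this is exactly formula (\ref{scr-act-new}) with $\mathcal Y$ of the stated type. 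The only genuinely computational step is the identity $h(M_2)-h(M_3)=\frac{y}{x-1}+1-r-\lambda$, together with the routine but essential check that the tensor product of the two spaces of intertwining operators is nonzero; the latter is exactly where the hypothesis $x\neq 1$ enters, excluding the degenerate $L^{HVir}$--fusion rules at $x=1$.
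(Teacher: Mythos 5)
Your proposal is correct and takes essentially the same route as the paper: both apply Lemma \ref{AdM-2012} with the intertwining operator obtained by tensoring the $L^{HVir}$-- and $\Pi$--fusion rules, and both reduce the integrality of the powers of $z$ to the congruence of top conformal weights computed from Lemma \ref{racun-1}, yielding the condition $\lambda-\frac{y}{x-1}\in\Z$ (your value $h(M_2)-h(M_3)=\frac{y}{x-1}+1-r-\lambda$ is the negative of, hence equivalent to, the paper's stated quantity). Your write-up is in fact more explicit than the paper's two-line justification, and the identification of $S$ with $\mbox{Res}_z\,\mathcal Y(s,z)$ matches the construction in Lemma \ref{AdM-2012}.
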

  Note that (similarly as in  \cite{A-2017})  in all cases where $U[x,y, \lambda, r]$ is  a $\mathcal V$--module we have that   $L^{HVir} [x, y] \otimes \Pi_r(\lambda)$ is  reducible.

  Applying Proposition \ref{log-koncept}   to the  $\mathcal V$--module  $U[x,y, \lambda, r]$, and using the fact that 
  $V^1(\mathfrak h_4) \subset \mbox{Ker}_{\mathcal V} (S)$, we get the following  logarithmic $V^1(\mathfrak h_4)$--module
  $$(\mathcal P_{r}[x,y], \widetilde Y_{ P_{r}[x,y]} )  := (\widetilde U[x,y,   \frac{y}{x-1}, r], Y_{\widetilde U[x,y,   \frac{y}{x-1}, r]}).$$ 
  This module will be simply denoted by  $\mathcal P_{r}[x,y]$. Recall that the action of the Virasoro algebra on   $\mathcal P_{r}[x,y]$ is
  $$ \widetilde L^{sug} _{\mathfrak h_4} (z) = L^{sug} _{\mathfrak h_4} (z) + z^{-1} Y_{  U[x,y,  \frac{y}{x-1}, r]} ( s, z)  = \sum_{n \in {\Z}} \widetilde L^{sug} _{\mathfrak h_4} (n) z ^{-n-2}. $$
  In particular, we have
  $$ \widetilde L^{sug} _{\mathfrak h_4} (0) = L^{sug} _{\mathfrak h_4}(0) + S,  $$
  where $S$ acts as (\ref{scr-act-new}).
   
  The nilpotent part of   $\widetilde L^{sug} _{\mathfrak h_4} (0)$ is  $\widetilde L^{sug} _{\mathfrak h_4} (0)_{n} = S$, which implies that  $\mathcal P_{r}(x,y)$ has  { rank two.}
  In this way we get:
   \begin{theorem} \label{log-non-split} Assume that  $ x, r,   \lambda - \frac{y}{x-1}   \in {\Z}$ and $x \ne 1$. Then $\mathcal P_{r}[x,y]$ is a logarithmic $V^1 (\mathfrak h_4)$--module which is a non-split extension:
  $$ 0   \rightarrow L^{HVir} [x-1,  \frac{x-2}{x-1} y] \otimes \Pi_{r-1}(\lambda)    \rightarrow   \mathcal P_{r}[x,y] \rightarrow
 L^{HVir} [x, y] \otimes \Pi_r(\lambda)
  \rightarrow 0. $$
  $\mathcal P_{r}[x,y]$  is a logarithmic module of  { rank two}
    with respect to  $\widetilde L^{sug} _{\mathfrak  h_4}(0)$.
  \end{theorem}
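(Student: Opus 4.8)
The plan is to assemble the statement from pieces that are by now largely in place. First I would invoke Lemma \ref{racun-1} together with the fusion-rule computation immediately preceding the theorem to confirm that under the hypotheses $x\in\Z$, $x\neq 1$, $r\in\Z$ and $\lambda-\frac{y}{x-1}\in\Z$, the direct sum $U[x,y,\lambda,r]=L^{HVir}[x,y]\otimes\Pi_r(\lambda)\oplus L^{HVir}[x-1,\frac{x-2}{x-1}y]\otimes\Pi_{r-1}(\lambda)$ indeed carries a $\mathcal V$--module structure: the congruence of conformal weights modulo $\Z$ is exactly the condition isolated in the lemma just above, and Lemma \ref{AdM-2012} then supplies the vertex operator built from the intertwining operator $\mathcal Y$ of the indicated type. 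Here one uses the fusion rule $L^{HVir}[-1,0]\times L^{HVir}[x,y]=L^{HVir}[x-1,\frac{x-2}{x-1}y]$ from \cite{AR}, valid since $x\neq 1$, tensored with the $\Pi$--module fusion rule $\Pi_{-1}(0)\times\Pi_r(\lambda)=\Pi_{r-1}(\lambda)$ from Section \ref{pi-voa}.

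Next I would apply Proposition \ref{log-koncept} with $s=v_{-1,0}\otimes e^{\nu}$, which by Theorem \ref{inverse-realization} satisfies $L(n)s=\delta_{n,0}s$ for $n\ge 0$ and, being an element of the module summand $M$, automatically satisfies the commutativity condition (\ref{uvjet-1}). The proposition produces the deformed module $\widetilde U[x,y,\frac{y}{x-1},r]$ over $\overline V=\mathrm{Ker}_{\mathcal V}(S)$; since $V^1(\mathfrak h_4)\subset\mathrm{Ker}_{\mathcal V}(S)$ as recorded in the bulleted setup, restriction gives the $V^1(\mathfrak h_4)$--module $\mathcal P_r[x,y]$. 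The exact sequence is then immediate: the deformation $Y_{\widetilde U}(\cdot,z)=Y_U(\Delta(s,z)\cdot,z)$ does not alter the underlying vector space or the $\Z$--filtration by summands, so $L^{HVir}[x-1,\frac{x-2}{x-1}y]\otimes\Pi_{r-1}(\lambda)$ remains a submodule and the quotient is $L^{HVir}[x,y]\otimes\Pi_r(\lambda)$ — one checks that the undeformed $V^1(\mathfrak h_4)$--actions on the sub and quotient are unchanged because $\Delta(s,z)$ acts trivially on each summand modulo the off-diagonal term, which is precisely the intertwining piece raising the $\Pi$--grading.

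For the rank statement, I would use the last formula of Proposition \ref{log-koncept}: $\widetilde L^{sug}_{\mathfrak h_4}(0)=L^{sug}_{\mathfrak h_4}(0)+S$, where $S$ acts as $\mathrm{Res}_z\,\mathcal Y(s,z)$ sending the $\Pi_r(\lambda)$--summand into the $\Pi_{r-1}(\lambda)$--summand and annihilating the latter. Since $L^{sug}_{\mathfrak h_4}(0)$ is the semisimple part (it acts by a scalar on each summand, these scalars being congruent mod $\Z$, and on a logarithmic module its semisimplification agrees with $L_{ss}(0)$) and $S$ is its nilpotent part with $S^2=0$, $S\neq 0$, we get $(\widetilde L^{sug}_{\mathfrak h_4}(0)-L_{ss}(0))^2=0$ and $\neq\!$-to-the-first-power, i.e.\ rank two. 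Non-splitness follows because $S\neq 0$: a splitting would force the off-diagonal intertwining contribution, hence $S$ itself, to vanish on the sub/quotient decomposition; concretely one exhibits a vector in $L^{HVir}[x,y]\otimes\Pi_r(\lambda)$ on which $S$ acts nontrivially into the submodule, mirroring the computation $S.e^{-\alpha-\beta}=e^{-\beta}\neq 0$ from the proof of Theorem \ref{inverse-realization}.

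The main obstacle I anticipate is verifying that $S=\mathrm{Res}_z\mathcal Y(s,z)$ is genuinely nonzero on $U[x,y,\lambda,r]$ — i.e.\ that the relevant intertwining operator does not degenerate — and that its image lands inside, rather than being killed by, the submodule structure. This amounts to a nonvanishing statement for a specific coefficient of the intertwining operator of type $\binom{L^{HVir}[x-1,\frac{x-2}{x-1}y]\otimes\Pi_{r-1}(\lambda)}{L^{HVir}[-1,0]\otimes\Pi_{-1}(0)\ \ L^{HVir}[x,y]\otimes\Pi_r(\lambda)}$, which one settles using the explicit $\Pi$--part (where the lattice vertex operator $e^{\nu}$ acts invertibly between the $e^{r\mu+\lambda c}$ modules) and the known one-dimensionality of the corresponding space of $L^{HVir}$--intertwiners from \cite{AR}. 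Everything else is bookkeeping with the already-established realization and the cited propositions.
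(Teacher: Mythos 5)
Your proposal follows essentially the same route as the paper: build $U[x,y,\lambda,r]$ as a $\mathcal V$--module via Lemma \ref{AdM-2012}, the $L^{HVir}$ and $\Pi$ fusion rules, and the conformal-weight congruence from Lemma \ref{racun-1}, then deform by $\Delta(s,z)$ via Proposition \ref{log-koncept} and read off the extension and the rank from $\widetilde L^{sug}_{\mathfrak h_4}(0)=L^{sug}_{\mathfrak h_4}(0)+S$ with $S$ nilpotent of order two. Your added care about the nonvanishing of $S=\mathrm{Res}_z\,\mathcal Y(s,z)$ on $U[x,y,\lambda,r]$ (needed for both rank two and non-splitness) addresses a point the paper leaves implicit, and your suggested argument via the invertible lattice part $e^{\nu}$ and the one-dimensional space of $L^{HVir}$--intertwiners is the right way to settle it.
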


  \subsection{Structure of logarithmic modules: finite length case}
\label{str-log-2}
We shall analyze the structure of logarithmic modules constructed in Theorem   \ref{log-non-split}. 

In the case $x=2$, one sees that Theorem \ref{log-non-split} produces a logarithmic module $\mathcal P_r[2,y]$ having  submodule  $ L^{HVir} [1,  0]  \otimes  \Pi_{r-1}(0)$ of infinite  length and whose socle  part is trivial. We shall here  assume that $x \ne 2$.

Using the  structure of relaxed modules $ L^{HVir} [x, y] \otimes \Pi_r(\lambda)$ and $ L^{HVir} [x-1,  \frac{x-2}{x-1} y]  \otimes \Pi_{r-1}(\lambda) $ from Theorem  \ref{str-relaxed-red}, we get
the Loewy diagram of $  \mathcal P_r[x,y]$:

\begin{center}
 \begin{tikzpicture}[baseline=(current bounding box.center)]
\node (tag) at (-6,0) [] {\em{}};
\node (top) at (0,1.5) [] {$ g^{r-1} (L_2) $};
\node (middleI) at (-2.25,0) [] {$g^{r-1}(L_1)$};
\node (middleII) at (2.25,0) [] {$  g^{r-2}(L_4)$};
 \node (bottom) at (0,-1.5) [] {$g^{r-1}( L_2)$};
\draw[->, thick] (top) -- (middleI);
\draw[->, thick] (top) -- (middleII);
\draw[->,thick] (middleI) -- (bottom);
\draw[->, thick] (middleII) -- (bottom);
\node (label) at (0,0) [circle, inner sep=2pt, color=black, fill=brown!25!]
{$ \mathcal P_r[x,y] $};
\end{tikzpicture} $\; .$

\end{center}

Here we use the fact 
$$ g( L_2) = g( \mathcal L^+ _{x-1, \frac{x-1}{2}  + \frac{y}{x-1}} ) = \mathcal L^- _{x-2, \frac{x}{2}  + \frac{y}{x-1}} =L_3.$$
For $r=1$, we get
\begin{center}
 \begin{tikzpicture}[baseline=(current bounding box.center)]
\node (tag) at (-6,0) [] {\em{}};
\node (top) at (0,1.5) [] {$  L_2$};
\node (middleI) at (-2.25,0) [] {$L_1$};
\node (middleII) at (2.25,0) [] {$  g^{-1}( L_4)$};
 \node (bottom) at (0,-1.5) [] {$ L_2$};
\draw[->, thick] (top) -- (middleI);
\draw[->, thick] (top) -- (middleII);
\draw[->,thick] (middleI) -- (bottom);
\draw[->, thick] (middleII) -- (bottom);
\node (label) at (0,0) [circle, inner sep=2pt, color=black, fill=brown!25!]
{$ \mathcal P_1 [x,y]$};
\end{tikzpicture} $\; .$
\end{center}
and for $r=2$, we get
\begin{center}
 \begin{tikzpicture}[baseline=(current bounding box.center)]
\node (tag) at (-6,0) [] {\em{}};
\node (top) at (0,1.5) [] {$  L_3$};
\node (middleI) at (-2.25,0) [] {$g(L_1)$};
\node (middleII) at (2.25,0) [] {$   L_4$};
 \node (bottom) at (0,-1.5) [] {$ L_3$};
\draw[->, thick] (top) -- (middleI);
\draw[->, thick] (top) -- (middleII);
\draw[->,thick] (middleI) -- (bottom);
\draw[->, thick] (middleII) -- (bottom);
\node (label) at (0,0) [circle, inner sep=2pt, color=black, fill=brown!25!]
{$ \mathcal P_2[x,y]$};
\end{tikzpicture} $\; .$
\hspace{3cm} $\text{}$
\end{center}


We have:
 \begin{proposition} Assume that $x \in {\Z}$, $x\notin \{1, 2\}$, $r \in {\Z}$.  The  logarithmic module  $\mathcal P_r[x,y]$ belongs to  $\mathcal C^{fin}$ and it  is of  socle length three. The  socle part is isomorphic to  $$g^{r-1}( \widehat{\mathcal L}^+ _{x-1, \frac{x-1}{2}  + \frac{y}{x-1}}). $$
 In particular,
  the socle part of  $\mathcal P_1[x,y]$ is isomorphic to  $\widehat{ \mathcal L} ^+_ {x-1,   \frac{x-1}{2}  + \frac{y}{x-1}}$.
 
 \end{proposition}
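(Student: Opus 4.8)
The plan is to read the first and third assertions off the Loewy diagram of $\mathcal P_r[x,y]$ displayed above, so the real work is to justify that diagram (in particular that the socle is the single bottom node $g^{r-1}(L_2)$) and to check $\mathcal P_r[x,y]\in\mathcal C^{fin}$.

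For the structure I would start from the non-split short exact sequence $0\to B\to \mathcal P_r[x,y]\to Q\to 0$ of Theorem \ref{log-non-split}, with $Q=L^{HVir}[x,y]\otimes\Pi_r(\lambda)$, $B=L^{HVir}[x-1,\tfrac{x-2}{x-1}y]\otimes\Pi_{r-1}(\lambda)$ and projection $\pi:\mathcal P_r[x,y]\to Q$. By Theorem \ref{str-relaxed-red} both $B$ and $Q$ are uniserial of length two: $\mathrm{soc}(Q)=g^{r-1}(L_1)$, $Q/\mathrm{soc}(Q)\cong g^{r-1}(L_2)$, while (using $g(L_2)=L_3$) $\mathrm{soc}(B)=g^{r-2}(L_3)=g^{r-1}(L_2)$ and $B/\mathrm{soc}(B)\cong g^{r-2}(L_4)$; here $L_1\not\cong L_2$ and $L_3\not\cong L_4$ because the former are of highest/lowest weight type respectively (Lemma \ref{str-33}), and $g$ is an automorphism. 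The key tool is the nilpotent part $S=\widetilde L^{sug}_{\mathfrak h_4}(0)_n$ of $\widetilde L^{sug}_{\mathfrak h_4}(0)$: by the construction of $\mathcal P_r[x,y]$ (Lemma \ref{AdM-2012} and Proposition \ref{log-koncept}) $S$ is a $V^1(\mathfrak h_4)$-module endomorphism with $S^2=0$, $S\neq 0$, $\mathrm{Im}(S)\subseteq B$ and $S|_B=0$. Hence $B\subseteq\mathrm{Ker}(S)\subsetneq\mathcal P_r[x,y]$, and $\mathrm{Ker}(S)\neq B$ (otherwise $Q\cong\mathcal P_r[x,y]/\mathrm{Ker}(S)=\mathrm{Im}(S)$ would embed in $B$, forcing the isomorphism $Q\cong B$ of length-two modules, impossible since they have different composition factors), so $\mathrm{Ker}(S)/B$ is the unique proper nonzero submodule $\mathrm{soc}(Q)=g^{r-1}(L_1)$ of $Q$; thus $\mathrm{Ker}(S)=\pi^{-1}(\mathrm{soc}(Q))$ has length three and $\mathrm{Im}(S)\cong\mathcal P_r[x,y]/\mathrm{Ker}(S)$ is simple, so, being a nonzero submodule of $B$, it equals $\mathrm{soc}(B)=g^{r-1}(L_2)$.

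Now a simple submodule $T$ of $\mathcal P_r[x,y]$ is either contained in $B$, hence equal to $\mathrm{soc}(B)=g^{r-1}(L_2)$, or satisfies $T\cap B=0$ and is carried by $\pi$ isomorphically onto the unique simple submodule $\mathrm{soc}(Q)$ of the uniserial module $Q$, so that $\mathrm{Ker}(S)=\pi^{-1}(\mathrm{soc}(Q))=B\oplus T$ and the pulled-back sequence $0\to B\to\mathrm{Ker}(S)\to\mathrm{soc}(Q)\to 0$ splits. Showing that this sequence is non-split is the main obstacle; I would establish it by an explicit computation with the intertwining operator $\mathcal Y$ entering the $\mathcal V$-module structure, checking that $S$ carries a lift of a generator of $\mathrm{soc}(Q)$ to a nonzero element of $B$, exactly as in the $\mathfrak{sl}_2$ case of \cite{A-2017} (equivalently, that the restriction $\mathrm{Ext}^1_{V^1(\mathfrak h_4)}(Q,B)\to\mathrm{Ext}^1_{V^1(\mathfrak h_4)}(\mathrm{soc}(Q),B)$ does not annihilate the class of $\mathcal P_r[x,y]$). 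Granting this, $\mathrm{soc}(\mathcal P_r[x,y])=g^{r-1}(L_2)=g^{r-1}(\widehat{\mathcal L}^+_{x-1,\frac{x-1}{2}+\frac{y}{x-1}})$; moreover $\mathcal P_r[x,y]/\mathrm{soc}(\mathcal P_r[x,y])=\mathcal P_r[x,y]/\mathrm{Im}(S)$ has the non-semisimple quotient $Q$, hence is not semisimple, so the socle length is at least three, and it is exactly three once one knows that the heart $\mathrm{Ker}(S)/\mathrm{Im}(S)$ is semisimple, i.e. that $\mathrm{Ext}^1_{\mathcal C}(g^{r-1}(L_1),g^{r-2}(L_4))=0$, which is handled by the same kind of explicit computation. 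This is precisely the diamond Loewy diagram, and the case $r=1$ (where $g^0=\mathrm{id}$) gives the last assertion.

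Finally, for $\mathcal P_r[x,y]\in\mathcal C^{fin}$: the operators $I(0),J(0)$ act on $\mathcal P_r[x,y]=\widetilde U$ by the same, undeformed, operators as on the underlying $\mathcal V$-module $U[x,y,\lambda,r]$, because the nonnegative modes of the currents $I$ and $J$ annihilate $s=v_{-1,0}\otimes e^{\nu}$, so $\Delta(s,z)$ fixes $I$ and $J$; hence $I(0),J(0)$ act semisimply with the same joint weight-space decomposition as on $U$, which by Lemma \ref{racun-1} together with the Heisenberg--Virasoro character formulas has finite-dimensional joint generalized weight spaces for $(I(0),J(0),L^{sug}_{\mathfrak h_4}(0))$; and since $\widetilde L^{sug}_{\mathfrak h_4}(0)=L^{sug}_{\mathfrak h_4}(0)+S$ with semisimple part $L^{sug}_{\mathfrak h_4}(0)$ commuting with the nilpotent $S$, its generalized eigenspaces coincide with those finite-dimensional spaces. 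As $\mathcal P_r[x,y]$ has a composition series of length four it has finite socle length, whence $\mathcal P_r[x,y]\in\mathcal C^{fin}$. (The hypothesis $x\neq 2$ is used precisely here: for $x=2$ the submodule $B=L^{HVir}[1,0]\otimes\Pi_{r-1}(0)$ has infinite length by Corollary \ref{str-i0}, so $\mathcal P_r[2,y]\notin\mathcal C^{fin}$.)
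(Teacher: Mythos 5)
Your proposal follows essentially the same route as the paper, which states this proposition without a written proof and simply reads it off the diamond Loewy diagram obtained by combining the non-split extension of Theorem \ref{log-non-split} with the length-two structure of $B=L^{HVir}[x-1,\tfrac{x-2}{x-1}y]\otimes\Pi_{r-1}(\lambda)$ and $Q=L^{HVir}[x,y]\otimes\Pi_r(\lambda)$ from Theorem \ref{str-relaxed-red} and the identity $g(L_2)=L_3$. Your treatment is in fact more careful than the paper's: the analysis of the nilpotent endomorphism $S$ (showing $\operatorname{Im}(S)=\operatorname{soc}(B)=g^{r-1}(L_2)$ and $\operatorname{Ker}(S)=\pi^{-1}(\operatorname{soc}(Q))$), the argument that the socle length is at least three because $Q$ is a non-semisimple quotient of $\mathcal P_r[x,y]/\operatorname{soc}(\mathcal P_r[x,y])$, the verification that $I(0),J(0)$ remain undeformed (indeed $\rho(I)(0)s=\rho(J)(0)s=0$, so $\Delta(s,z)$ fixes $\rho(I)$ and $\rho(J)$), and the observation that $x=2$ is excluded because $L^{HVir}[1,0]\otimes\Pi_{r-1}(\lambda)$ has infinite length are all correct and consistent with the text preceding the proposition. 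The two points you explicitly defer --- non-splitness of $0\to B\to\operatorname{Ker}(S)\to\operatorname{soc}(Q)\to 0$, which is exactly what forces $\operatorname{soc}(\mathcal P_r[x,y])=g^{r-1}(L_2)$, and semisimplicity of the heart $\operatorname{Ker}(S)/\operatorname{Im}(S)$, which is what caps the socle length at three --- are precisely the facts the paper's Loewy diagram tacitly assumes and for which the paper likewise supplies no argument. So your reduction of the proposition to these two concrete extension computations is the substantive content of the proof; to make it self-contained one would still have to carry them out, e.g.\ by checking directly in $U[x,y,\lambda,r]$ that no lift of the conjugate-highest-weight vector generating $\operatorname{soc}(Q)$ satisfies the extremal-vector conditions inside $\operatorname{Ker}(S)$, as is done in the $\widehat{\mathfrak{sl}}_2$ case of \cite{A-2017}.
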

 
 \begin{remark} 
 The structure of the logarithmic module  $\mathcal P_1[x,y]$ shows that it has an analogous  Loewy diagram  as the projective covers in the category of weight $L_k(\mathfrak{sl}_2)$--modules (cf. \cite{A-2017, ACK-23}). Hence is natural to conjecture that  there is a  subcategory $\mathcal C_1 ^{fin}$ of \ $\mathcal C^{fin}$ such that $\mathcal P_1[x,y]$  is the projective cover of   $\widehat{ \mathcal L} ^+_ {x-1,   \frac{x-1}{2}  + \frac{y}{x-1}}$ in  $ \mathcal C_1 ^{fin}$.  
 \end{remark}


\subsection{On logarithmic modules of infinite length  }

Note that in previous subsection we have constructed cyclic  logarithmic modules  in   $\mathcal C^{fin}$ whose  socle part is irreducible  highest weight  module $ \widehat{\mathcal L}^+ _{i, \lambda}$ for each $i \in {\Z} \setminus \{ 0, 1\}$.    But this construction does not cover the case $i=0$ and modules $\widehat{\mathcal L}_{0, \lambda}$, $\lambda  \in {\C}$. Now we shall construct logarithmic modules  $\mathcal P_1 (\lambda)$ whose socle part  is  $\widehat{\mathcal L}_{0, \lambda}$.

 Using the fusion rules and a   conformal weight argument,  one can easily  see that 
$$L^{HVir} [1,0] \otimes \Pi_{r} (\lambda) \oplus L^{HVir} [0,y] \otimes \Pi_{r-1}(\lambda) $$
has the structure of a $\mathcal V$--module if and only if $\lambda \equiv  - y \ \mbox{mod} (\Z)$.  So we can take $y = -\lambda$, and we get the $\mathcal V$--module $$   U [1, 0, \lambda, r]:=L^{HVir} [1,0] \otimes \Pi_{r} (\lambda) \oplus L^{HVir} [0,-\lambda] \otimes \Pi_{r-1}(\lambda). $$
 The screening opetator $S$ acts via 

 \bea     S \vert _{ U[1,0,   \lambda  , r]}= \mbox{Res}_z \mathcal Y (s, z),  \label{scr-act-new-2} \eea
  where 
  $\mathcal Y$ is the intertwining operator of type
  $${  L^{HVir}[0, -\lambda ]  \otimes \Pi_{r-1} (\lambda  )  \choose  L^{HVir} [-1, 0]  \otimes \Pi_{-1}(0)   \ \  L^{HVir} [1, 0] \otimes \Pi_r( \lambda)}. $$

Then  applying again   Proposition \ref{log-koncept}   to the  $\mathcal V$--module  $U [1, 0, \lambda, r]$ we get
the logarithmic $V^1(\mathfrak h_4)$--module
$$(  \widetilde U[1, 0, \lambda, r], \widetilde  Y_{   \widetilde U[1, 0, \lambda, r] }). $$

   Then $\widetilde L^{sug}_{\mathfrak h_4} (0) _{n} =S$ is the nilpotent part of $\widetilde L^{sug} _{\mathfrak h_4}(0)$  and it acts by  (\ref{scr-act-new-2}).
 Consider   the  case $r=1$.  
 
   \begin{proposition} 
 Let  $Z_i ^{(1)}= v_{1,0} \otimes e^{\mu + (\lambda+ i) c}, \quad Z_i  ^{(2)}= v_{0,-\lambda } \otimes e^{(\lambda + i+1) c}. $
Then 
 $\mathcal P_1(\lambda) =   V^1(\mathfrak h_4). Z^{(1)} _{-1} \subset \widetilde U[1, 0, \lambda, 1]$  is logarithmic module of $\widetilde L^{sug} _{\mathfrak h_4}(0)$   {rank  two}
 and has   infinite socle length.  The socle  part of   $\mathcal P_1(\lambda) $ is generated by $Z_{-1} ^{(2)}$ and it  isomorphic to    $ \widehat{\mathcal L}_{0,\lambda}$.
\end{proposition}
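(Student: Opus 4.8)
The plan is to realise $\mathcal P_1(\lambda)$ concretely inside $\widetilde U[1,0,\lambda,1]$ and to read off its Loewy structure from the known structure of the two building blocks $M_2:=L^{HVir}[1,0]\otimes\Pi_1(\lambda)$ (Theorem \ref{thm:relax}(4) and Corollary \ref{str-i0}) and $M_3:=L^{HVir}[0,-\lambda]\otimes\Pi_0(\lambda)$ (Corollary \ref{str-voa-relaxed}), together with the explicit form of the screening operator $S=\mathrm{Res}_z\,\mathcal Y(s,z)$ from \eqref{scr-act-new-2}. Recall from Proposition \ref{log-koncept} that the deformed Virasoro zero mode is $\widetilde L^{sug}_{\mathfrak h_4}(0)=L^{sug}_{\mathfrak h_4}(0)+S$, that $M_3$ sits inside $\widetilde U[1,0,\lambda,1]$ as a $V^1(\mathfrak h_4)$--submodule (the deformation corrections of the currents map $M_2$ into $M_3$), and that $\widetilde L^{sug}_{\mathfrak h_4}(0)$, being a mode of the deformed vertex operator of $\omega^{sug}_{\mathfrak h_4}\in V^1(\mathfrak h_4)$, preserves every $V^1(\mathfrak h_4)$--submodule.

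First I would record, using Theorem \ref{inverse-realization} and the $x=1$, $y=0$ specialisation of the computation in the proof of Theorem \ref{thm:relax}, the action on the extremal vectors: $E(0)Z^{(1)}_i=Z^{(1)}_{i+1}$, $F(0)Z^{(1)}_i=0$, $I(0)Z^{(1)}_i=0$, $J(0)Z^{(1)}_i=(\lambda+i+1)Z^{(1)}_i$, $L^{sug}_{\mathfrak h_4}(0)Z^{(1)}_i=0$, and on the other family $I(0)Z^{(2)}_i=0$, $J(0)Z^{(2)}_i=(\lambda+i+1)Z^{(2)}_i$, $L^{sug}_{\mathfrak h_4}(0)Z^{(2)}_i=(i+1)Z^{(2)}_i$, $Z^{(2)}_{-1}=E(-1)Z^{(2)}_{-2}$; together with $F(0)Z^{(2)}_{-1}=0$ this shows that $Z^{(2)}_{-1}$ lies in the socle $\widehat{\mathcal L}_{0,\lambda}$ of $M_3$ (Corollary \ref{str-voa-relaxed}) and generates it, since $\widehat{\mathcal L}_{0,\lambda}$ is irreducible. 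Next I would evaluate $S$ on the $Z^{(1)}_i$: writing the $\Pi$--component of $\mathcal Y(s,z)$ as the lattice intertwiner and using $s=v_{-1,0}\otimes e^\nu$ with $\nu+\mu=c$, one gets $\mathcal Y(s,z)Z^{(1)}_i=z^{\,i}\bigl(\kappa\,Z^{(2)}_i+O(z)\bigr)$ with $\kappa\ne 0$, hence $SZ^{(1)}_i=0$ for $i\ge 0$, $SZ^{(1)}_{-1}=\kappa\,Z^{(2)}_{-1}$, and by compatibility with $E(0)$ the whole image $S(M_2)$ lies in $V^1(\mathfrak h_4).Z^{(2)}_{-1}=\widehat{\mathcal L}_{0,\lambda}$.

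With these formulas the rank--two assertion is immediate: $S$ maps $M_2$ into $M_3$ and annihilates $M_3$, so $S^2=0$, while $SZ^{(1)}_{-1}=\kappa\,Z^{(2)}_{-1}\ne 0$; hence the nilpotent part of $\widetilde L^{sug}_{\mathfrak h_4}(0)$ is exactly $S\neq 0$, and $\widetilde L^{sug}_{\mathfrak h_4}(0)^2=L^{sug}_{\mathfrak h_4}(0)^2$ is semisimple on $\mathcal P_1(\lambda)$. Applying $\widetilde L^{sug}_{\mathfrak h_4}(0)=L^{sug}_{\mathfrak h_4}(0)+S$ to the cyclic generator gives $\kappa\,Z^{(2)}_{-1}=\widetilde L^{sug}_{\mathfrak h_4}(0)Z^{(1)}_{-1}\in\mathcal P_1(\lambda)$, so $\widehat{\mathcal L}_{0,\lambda}=V^1(\mathfrak h_4).Z^{(2)}_{-1}\subseteq\mathcal P_1(\lambda)\cap M_3$.

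It remains to pin down the structure. Since $\mathcal P_1(\lambda)\cap M_3$ is a $V^1(\mathfrak h_4)$--submodule of $M_3$, it is one of $0$, $\widehat{\mathcal L}_{0,\lambda}$, $M_3$ (Corollary \ref{str-voa-relaxed}); a conformal weight and $J(0)$--eigenvalue bookkeeping of the correction terms produced from $Z^{(1)}_{-1}$, all of which lie in $\widehat{\mathcal L}_{0,\lambda}$, shows it equals $\widehat{\mathcal L}_{0,\lambda}$. On the other hand $\mathcal P_1(\lambda)/(\mathcal P_1(\lambda)\cap M_3)$ is the $M_2$--submodule generated by $Z^{(1)}_{-1}$, which by Corollary \ref{str-i0} is the reducible module $\langle Z_{-1}\rangle$ with infinite composition series and subquotients $\widehat{\mathcal L}_{0,\lambda},\widehat{\mathcal L}_{0,\lambda+1},\widehat{\mathcal L}_{0,\lambda+2},\dots$; in particular $\langle Z_{-1}\rangle$ has no simple submodule, so $\mathrm{soc}(\mathcal P_1(\lambda))=\widehat{\mathcal L}_{0,\lambda}$, generated by $Z^{(2)}_{-1}$, while the socle filtration of $\mathcal P_1(\lambda)$ never exhausts it, i.e.\ $\mathcal P_1(\lambda)$ has infinite socle length. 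The main obstacle is the explicit evaluation of the screening operator on the extremal vectors---showing $\kappa\neq 0$ and that all of $S(M_2)$ lands in the socle $\widehat{\mathcal L}_{0,\lambda}$---since this requires controlling the leading coefficients of the product of the Heisenberg--Virasoro and lattice intertwining operators, exactly as in the analogous step of \cite{A-2017}.
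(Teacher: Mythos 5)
Your proposal is correct and follows essentially the same route as the paper's proof: compute the $L^{sug}_{\mathfrak h_4}(0)$-eigenvalues of the extremal vectors via Lemma \ref{racun-1}, evaluate the screening operator to get $SZ^{(1)}_{-1}=\kappa Z^{(2)}_{-1}\neq 0$ and $SZ^{(1)}_i=0$ for $i\ge 0$ (whence rank two), identify the socle of $L^{HVir}[0,-\lambda]\otimes\Pi_0(\lambda)$ with $\widehat{\mathcal L}_{0,\lambda}$, and use the infinite descending chain of Corollary \ref{str-i0} inside $L^{HVir}[1,0]\otimes\Pi_1(\lambda)$ together with $\widetilde E(z)=E(z)$ to get infinite socle length. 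The only difference is that you spell out more explicitly why no simple submodule can survive in the quotient $\langle Z_{-1}\rangle$, a point the paper leaves implicit; the nonvanishing of the leading coefficient $\kappa$ is asserted rather than computed in both arguments.
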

\begin{proof}
Note that Corollary   \ref{str-i0} implies that the  socle part of  the   $V^1(\mathfrak h_4)$--module $L^{HVir} [0,-\lambda] \otimes \Pi_{0}(\lambda)$ is isomorphic to $ V^1(\mathfrak h_4). Z_{-1} ^{(2)} \cong \widehat {\mathcal L}_{0, \lambda}$.

Using Lemma \ref{racun-1} we get 
$ L^{sug}_{\mathfrak h_4} (0) Z_i ^{(1)} =0$, $L^{sug} _{\mathfrak h_4} (0) Z_i ^{(2)} = (i+1) Z_i ^{(2)}$.
%
Then  we get $$\widetilde L^{sug} _{\mathfrak h_4} (0) _{n}  Z^{(1)} _{-1}= S Z^{(1)} _{-1} =    \nu  Z^{(2)}_{-1}, \quad (\nu \ne 0), \quad  \widetilde L^{sug} _{\mathfrak h_4} (0) _{n}  Z^{(1)}_{i } = S  Z^{(1)}_{i }  =   0  \quad ( i \ge 0).  $$
Now we have  that $\mathcal P_1(\lambda)$ is logarithmic module of  $\widetilde L^{sug} _{\mathfrak h_4} (0)$  {rank two}
 and the socle part of   $ \mathcal P_1(\lambda) $ is also  isomorphic  to  $\widehat{\mathcal L}_{0, \lambda}$.

Since $ \widetilde E(z) = \widetilde Y( E, z ) = E(z)$, we directly have that  $Z^{(1)} _ i \in \mathcal P_1(\lambda)$, for $i \ge 0$,  which implies that   $\mathcal P_1(\lambda)$ is of infinite length. The proof follows.
\end{proof}
\begin{remark}
Note that  $\mathcal P_1(\lambda)$ does not have  finite socle length,  but  one can construct quotients of $\mathcal P_1(\lambda)$  of arbitrary finite socle lengths. 
Therefore we conclude that  in the category of finite-length modules $\mathcal C^{fin}$, $ \widehat{\mathcal L}_{0,\lambda}$ cannot have a projective cover. In particular, $ V^1 (\mathfrak h_4) = \widehat{\mathcal L}_{0,0}$ does not have projective cover in $\mathcal C^{fin}$.   But we believe that $\mathcal P_1(\lambda)$ is the projective cover of  $ \widehat{\mathcal L}_{0,\lambda}$ in a proper subcategory of $\mathcal C$.
\end{remark}


\section{Conclusion and future work}
\label{zakljucak}

 In this paper we identify the minimal affine $W$-algebra $W^1(\mathfrak h_4, x, F)$  for the Nappi-Witten vertex algebra  $V^1(\mathfrak h_4)$  and prove that it is isomorphic to the Heisenberg--Virasoro vertex algebra $L^{HVir}$ of level zero. We  also construct the inverse Quantum Hamiltonian Reduction (QHR)  which gives a realization 
\bea  V^1(\mathfrak h_4) \hookrightarrow W^1(\mathfrak h_4, x, F) \otimes \Pi.  \label{real-concl} \eea
We prove that all relaxed highest weight $V^1(\mathfrak h_4)$--modules whose top components are neither lowest nor highest weight $\mathfrak h_4$--modules have the form
$ L^{HVir}[x,y] \otimes \Pi_1(\lambda). $

Using the concepts developed for realization of $L_k(\mathfrak{sl}_2)$--modules in \cite{A-2017}, we present here a realization of a large family of logarithmic modules $\mathcal P_r[x,y]$ which are non-split extension of (reducible) relaxed modules. In particular, for $r=1$,   we  construct a logarithmic module $\mathcal P_1[x,y]$ ($x \in {\Z}\setminus \{1, 2\}$) of finite-length,  whose socle part is an irreducible highest weight   $V^1(\mathfrak h_4)$--module.  We believe that  $\mathcal P_1[x,y]$ is projective in a suitable category of weight modules.

We also construct logarithmic modules {$ \mathcal P_1(\lambda)$} of infinite length  whose socle part is anirreducible highest weight module $\widehat {\mathcal L}_{0, \lambda}$. (Note that
$\widehat {\mathcal L}_{0, 0} = V^1(\mathfrak h_4)$).  

Finally, let us present some open problems which we plan to investigate in future work:
 
 \begin{itemize}
 \item[(1)]  In this paper we construct logarithmic $V^1(\mathfrak h_4)$--modules using inverse QHR starting from irreducible highest weight $L^{HVir}$--modules. Therefore we consider  $V^1(\mathfrak h_4)$--modules which can be extended to a weight  $L^{HVir} \otimes \Pi$--modules. Then as in \cite{A-2017, AdM-selecta,  FFHST}  we apply  logarithmic deformations using   screening operator for the realization (\ref{real-concl}).  In  our construction, we deform the non-logarithmic  $L^{HVir}$--modules to get logarithmic $V^1(\mathfrak h_4)$.

A  second approach for the construction of logarithmic modules would be to start with a logarithmic module $M^1 _{log} $ for $L^{HVir}$  and any $\Pi$--module $M^2$. Then the tensor product module $M^1 _{log}  \otimes M^2$ is a logarithmic   $V^1(\mathfrak h_4)$--module.
The paper \cite{AR2} contains a realization of a large  family of logarithmic modules for $L^{HVir}$, which after applying the above  concept gives   a  family of logarithmic modules. We also think that such modules can be analyzed as logarithmic  $V^1(\mathfrak h_4)$--modules. 


\item[(2)] We plan to determine  the fusion rules for $V^1(\mathfrak h_4)$--modules constructed here.

\item[(3)] We believe that the inverse QHR approach can be applied for vertex algebras obtained from  more general non-reductive Lie algebras and Lie  superalgebras.

 \end{itemize}

   \vskip10pt {\footnotesize{}{ }\textbf{\footnotesize{}D.A.}{\footnotesize{}:
Department of Mathematics, Faculty of Science,  University of Zagreb, Bijeni\v{c}ka 30,
10 000 Zagreb, Croatia; }\texttt{\footnotesize{}adamovic@math.hr}{\footnotesize \par}

\vskip10pt {\footnotesize{}{ }\textbf{\footnotesize{}A.B.}{\footnotesize{}:
School of Mathematical Sciences, Raymond and Beverly Sackler Faculty
of Exact Sciences, Tel Aviv University, Israel}\texttt{\footnotesize{} babichenkoandrei@gmail.com}{\footnotesize \par}


\begin{thebibliography}{100}

  \bibitem{A-2017} D. Adamovi\' c, Realizations of simple affine vertex algebras and their modules: the cases $\widehat{sl(2)}$ and
   $\widehat{osp(1,2)}$, Communications in Mathematical Physics, March 2019, Volume 366, Issue 3, pp 1025-1067;

  \bibitem{ACGY} D. Adamovi\' c, T. Creutzig, N. Genra, J. Yang, The vertex algebras $\mathcal V^{(p)} $ and $ \mathcal R^{(p)}$, Communications in Mathematical Physics volume 383, pages 1207--1241(2021)


\bibitem{ACG-2024} D. Adamovi\' c,  T. Creutzig, N. Genra,   Relaxed and logarithmic modules of $\widehat{\mathfrak{sl}_3}$,
Math. Ann. 389, 281--324 (2024) https://doi.org/10.1007/s00208-023-02634-6.

 \bibitem{AKR-2021}  D. Adamovi\' c, K. Kawasetsu, D. Ridout, A realisation of the Bershadsky-Polyakov algebras and their relaxed modules, Letters in Math. Physics 111, 38 (2021) arXiv:2007.00396


  \bibitem{AKR-2023}  D. Adamovi\' c, K. Kawasetsu, D. Ridout,  Weight module classifications for Bershadsky–Polyakov algebras. Communications in Contemporary Mathematics   Vol. 26, No. 10, 2350063 (2024),  https://doi.org/10.1142/S0219199723500633, arXiv:2303.03713 [math.QA] (28 pages).




\bibitem{AdM-selecta} D. Adamovi\' c, A. Milas,  Lattice construction of logarithmic modules for certain vertex algebras,  Selecta Mathematica, New Series 15 (2009) 535--561

 \bibitem{AR} D. Adamovi\' c, G. Radobolja, Free field realization of the twisted Heisenberg-Virasoro algebra at level zero and its applications, Journal of Pure and Applied Algebra 219 (10) 2015, pp. 4322-4342.

 \bibitem{AR2} D. Adamovi\' c, G. Radobolja,  Self-dual and logarithmic representations of the twisted Heisenberg-Virasoro algebra at level zero, Communications in Contemporary Mathematics Vol. 21, No. 02, 1850008 (2019); arXiv:1703.00531 [math.QA].

\bibitem{AMP} D. Adamovi\' c, P. M\" oseneder Frajria, P. Papi, New approaches for studying conformal embeddings and collapsing levels for W--algebras, International Mathematics Research Notices, Volume 2023, Issue 22, November 2023, Pages 19431-19475, rnad138.

\bibitem{AdM-2012}  D. Adamovi\' c and  A. Milas, An explicit realization of logarithmic modules for the vertex operator algebra $W_{p,p'}$,  Journal of Mathematical Physics 073511 (2012), 16 pages

\bibitem{AP} D. Adamovi\' c, V.  Pedi\' c,  On fusion rules and intertwining operators for the Weyl vertex algebra, J.
Math. Phys. 60(8), 081701 (2019)

\bibitem{ACK-23} T. Arakawa, T. Creutzig, K. Kawasetsu,
Weight representations of affine Kac-Moody algebras and small quantum groups, arXiv:2311.10233


\bibitem{Wood-21} R. Allen, S. Lentner, C. Schweigert and S. Wood, Duality structures for module
categories of vertex operator algebras and the Feigin Fuchs boson, arXiv:2107.05718.

\bibitem{BJP}  Y. Bao,  C.  Jiang;  Y. Pei, Representations of affine Nappi--Witten algebras, Journal of Algebra 342 (2011) 111--133

\bibitem{DBT}  S. Berman, C. Dong and S. Tan, Representations of a class of lattice type vertex algebras, J. Pure Appl.
Algebra 176 (2002) 27--47.

 \bibitem{Billig} Y. Billig,
Representations of the twisted Heisenberg–Virasoro algebra at level zero,
	Canad. Math. Bull 46, no. 4 (2003): 529--537


\bibitem{BKRS} A. Babichenko, K. Kawasetsu, D. Ridout and W. Stewart. Representations of the Nappi--Witten vertex operator algebra. Letters in Mathematical Physics 111:131, 2021, arXiv:2011.14453 [math-ph] (21 pages).

\bibitem{DL} C. Dong, J. Lepowsky, \emph{Generalized vertex algebras and relative vertex operators}, Birkh\" auser, Boston, 1993.

\bibitem{DAK} G.~D'Appolonio, E.~Kiritsis,
String interactions in gravitational wave background,
{\it Nucl. Phys.}  B674:80--170, 2003,  arXiv:hep-th/0305081.


\bibitem{DAQ1} G.~D'Appolonio, T.~Quella,
The abelian cosets of the Heisenberg group,
{\it JHEP}  0711:045,  2007,  arXiv:0709.2342 [hep-th]


\bibitem{DAQ2} G.~D'Appolonio, T.~Quella,
The diagonal cosets of the Heisenberg group,
{\it JHEP}  0805:060,  2008,  arXiv:0801.4634 [hep-th]

\bibitem{Zac1} Z. Fehily,  Subregular W-algebras of type $A$, Communications in Contemporary Mathematics Vol. 25, No. 09, 2250049 (2023)

\bibitem{Zac2} Z. Fehily,   Inverse reduction for hook-type $W$--algebras,  Commun. Math. Phys. 405, 214 (2024),  arXiv:2306.14673.

\bibitem{FT} V. Futorny, A. Tsylke, Classification of irreducible nonzero level modules with finite-dimensional weight spaces for affine Lie algebras. J. Algebra 238(2), 426--441 (2001)

\bibitem{FFHST} J. Fjelstad, J. Fuchs, S. Hwang, A. M.  Semikhatov, I. Yu Tipunin,  Logarithmic conformal field theories via logarithmic deformations. Nuclear Phys. B 633, 379--413 (2002) 

\bibitem{FZ} I. B. Frenkel, Y. Zhu, Vertex operator algebras associated to representations of affine and Virasoro algebras. Duke Math. J. 66(1), 123--168 (1992)
\bibitem{KK} E.~Kiritsis, C.~Kounnas,
String propagation in gravitational wave background,
{\it Phys. Let.}  B320:264--272, 1994,  arXiv:hep-th/9310202.

\bibitem{Kac} V. G. Kac, Vertex Algebras for Beginners, University Lecture Series, Vol. 10, American Mathematical Society, Providence, RI, 1998.

\bibitem{KR} K.~Kawasetsu, D.~Ridout,
Relaxed highest-weight modules I: rank 1 cases.
{\it Comm. Math. Phys.}  368:627--663, 2019,  arXiv:1803.01989 [math-RT].

\bibitem{KW04} V.~Kac, M.~Wakimoto,
  Quantum reduction and representation theory of superconformal algebras,
{\it Adv. Math.}, {\bf 185}(2):400--458, 2004.

\bibitem{KWR} V. Kac, S. Roan, M.  Wakimoto, Quantum Reduction for Affine Superalgebras. Commun. Math. Phys. 241, 307--342 (2003).


\bibitem{Li} H. Li, Symmetric invariant bilinear forms on vertex operator algebras. J. Pure
Appl. Algebra 96, 279--297 (1994).

\bibitem{LL} J. Lepowsky, H. Li, Introduction to Vertex Operator Algebras and Their Representations, Progress in Math., vol. 227, Birkhäuser, Boston, 2004.


\bibitem{NW} C.~Nappi, E.~Witten,
Wess-Zumino-Witten model based on a non-semisimple group,
{\it Phys. Rev. Let.} 71:3751--3753, 1993,  arXiv:hep-th/9310112.


   \end{thebibliography}
\end{document}